\documentclass[a4paper,reqno]{amsart}

\usepackage[a4paper,width={16cm},left=2.5cm,bottom=3cm, top=3cm]{geometry}
\usepackage{enumerate}
\usepackage{yfonts}

\usepackage[english]{babel}

\usepackage{a4wide}
\usepackage[dvipsnames]{xcolor}
\usepackage{amsmath}
\usepackage{amssymb}
\usepackage{amsfonts}
\usepackage{amsthm,graphicx}
\usepackage{comment}
\usepackage{csquotes}
\usepackage{esint}
\usepackage{hyperref}
\hypersetup{
	linktocpage=true,
	colorlinks=true,
	linkcolor=blue!70!black,
	citecolor=orange!40!red,
	urlcolor=magenta!80!black,
}

\usepackage{mathtools}
\mathtoolsset{showonlyrefs}

\numberwithin{equation}{section}
\newtheorem{theorem}{Theorem}[section]
\newtheorem{lemma}[theorem]{Lemma}
\newtheorem{corollary}[theorem]{Corollary}

\newtheorem{proposition}[theorem]{Proposition}
\theoremstyle{definition}  
\newtheorem{definition}[theorem]{Definition} 

\newtheorem{example}[theorem]{Example}
\newtheorem{remark}[theorem]{Remark}

\newcommand{\mc}{\mathcal}
\newcommand{\mb}{\mathbb}

\newcommand{\la}{\lambda}
\newcommand{\norm}[1]{\left\lVert#1\right\rVert}
\newcommand{\pd}[2]{\frac{\partial#1}{\partial#2}}

\newcommand{\R}{\mb{R}}

\newcommand{\e}{\varepsilon}

\newcommand{\Tr}{\mathop{\rm{Tr}}}
\newcommand{\dive}{\mathop{\rm{div}}}

\newcommand{\capa}[2]{\textnormal{\rm{Cap}}_{#1}(#2)}

\usepackage{subfig}
\usepackage{tikz}
\usepackage{xcolor}
\usepackage{pgfplots}
\pgfplotsset{compat=1.18}
\usepackage{mathrsfs}
\usetikzlibrary{arrows}
\usetikzlibrary{shadings}
\makeatletter

\begin{document}
\title[On  the blow-up of the vectorial Bernoulli free boundary problem]
{On  the blow-up of the vectorial Bernoulli free boundary problem}

\author[G. Siclari and B. Velichkov]{Giovanni Siclari and Bozhidar Velichkov}

\address{Bozhidar Velichkov
\newline \indent Dipartimento di Matematica
\newline \indent Universita di Pisa 
\newline\indent Largo Bruno Pontecorvo, 5, 56127 Pisa, Italy}
\email{bozhidar.velichkov@unipi.it}

\address{Giovanni Siclari 
\newline \indent Centro di Ricerca Matematica Ennio De Giorgi
\newline \indent Scuola Normale Superiore di Pisa
\newline\indent Piazza dei Cavalieri 3, 56126 Pisa, Italy}
\email{giovanni.siclari@sns.it}

\date{\today}

\begin{abstract}
In this paper, we complete the classification of  the blow-up limits of minimizers of the vectorial Bernoulli free boundary problem. Furthermore, we study the vectorial Bernoulli free boundary problem in a bounded box $D$, with a constraint $m$ on the measure of the positivity set, and the asymptotic of minimizers as the measure constraint $m$ tends to $|D|$. Such a study with a linear datum on the fixed boundary is the main ingredient  for the characterization of the singular homogeneous global solutions of the vectorial problem and, thus, for the classification of the blow-up limits.
\end{abstract}

\maketitle

{\bf Keywords.} Vectorial free boundary, classification of blow-ups.

\medskip 

{\bf MSC classification.}  
35R35 

\section{Introduction}\label{sec_intro}
Let $D$ be an open set in $\R^d$ and let $\Lambda>0$. For every $W\in H^1(D;\R^k)$ we define the vectorial functional 
\begin{equation}\label{e:vectorial-functional-in-D}
J_\Lambda(W,D):=\int_{D} |\nabla W|^2\, dx+\Lambda |\Omega_W\cap D|,
\end{equation}
where for any $W:D\to\R^k$ we set 
\begin{equation}\label{e:positivity-set-defintion}
\Omega_W:=\{W\neq 0\}.
\end{equation}
\begin{definition}[Vectorial minimizers in $D$]\label{def:minimizers-in-D}
Let $\Lambda\ge 0$. We say that $U\in H^1(D,\R^k)$ is a minimizer of the vectorial functional $J_\Lambda$ in the open set $D\subset\R^d$, if  
\begin{equation}\label{prob_min_vec}
J_\Lambda(U,D)\le J_\Lambda(W,D)\quad\text{for every}\quad W\in H^1_{loc}(D,\R^k)\quad\text{with}\quad W-U\in H^1_{0}(D,\R^k).
\end{equation}
If \eqref{prob_min_vec} holds, we will say that $U$ is a minimizer (or variational solution) of the vectorial Bernoulli problem.
\end{definition}
The properties of the variational solutions to the vectorial boundary problem have been subject of several studies in recent years (see the survey \cite{TV_survey} for a detailed overview).
Basic properties as Lipschitzianity and non-degeneracy of the minimizers $U$ of \eqref{prob_min_vec} and interior density estimates and local finiteness of the perimeter of $\Omega_U$ 
have been established in \cite{BMV_Lip,CSY_vectorial,KL_non_deg,KL_deg,MTV_epsilon,MTV_reg_spec, MTV_reg_vect,T_lip,T_Reg}.
Furthermore, letting  $U$ be a minimizers \eqref{prob_min_vec} and  $\Omega^{(\gamma)}_{U}$ be the set of all points having density $\gamma \in [0,1]$,  we can divide the free boundary into three parts:
\begin{align}\label{def_reg}
&\mathrm{Reg}(\partial \Omega_{U}) := \Omega^{(1/2)}_{U} \cap D,  \\  \label{def_sing2}
&\mathrm{Sing}_2(\partial \Omega_{U}) :=  \Omega^{(1)}_{U} \cap \partial  \Omega_{U} \cap D, \\  \label{def_sing1}
&\mathrm{Sing}_1(\partial \Omega_{U}) := (\partial  \Omega_{U} \cap D) \setminus 
\big( \mathrm{Sing}_2(\partial  \Omega_{U}) \cup \mathrm{Reg}(\partial  \Omega_{U}) \big).
\end{align}
\noindent{\it Regular part of the free boundary.} The regularity of the free boundary at points belonging to $\mathrm{Reg}(\partial \Omega_{U})$ has been established by reducing  the problem to the scalar case (as in \cite{CSY_vectorial,MTV_harn,MTV_reg_vect,MTV_reg_spec}), by an epiperimetric inequality (see \cite{SV_epi}), or via improvement of flatness methods applied directly in the vectorial setting (see \cite{DT_impr,KL_non_deg,KL_deg}). 

\noindent{\it One-phase singular set.} The analysis of the one-phase singular set $\mathrm{Sing}_1(\partial \Omega_{U})$ follows from the results about the singular set for the Alt-Caffarelli's problem (\cite{W_dim_red,CJK_cones_stab,JS_cones,EE_quant_strat}). Precisely, in \cite{CSY_vectorial,KL_deg,KL_non_deg,MTV_reg_spec} it was shown that the set $\mathrm{Sing}_1(\partial \Omega_{U})$ has Hausdorff dimension at most $d-5$ and consists of points at which $\partial\Omega_U$ has one-phase conical singularities, while the rectifiability of $\mathrm{Sing}_1(\partial \Omega_{U})$ follows from \cite{EE_quant_strat}.

\noindent{\it Two-phase singular set.} Concerning $\mathrm{Sing}_2(\partial \Omega_{U})$, we know that it might have dimension $d-1$; for instance, explicit examples of such solutions are available in the two-phase case $k=1$ (see \cite{DSV-quasi}). On the other hand, it is not known whether the set $\mathrm{Sing}_2(\partial \Omega_{U})$ is regular or even if it contained in a regular $d-1$ dimensional manifold. 
A key step in establishing such results is the classification of the blow-up limits at points of $\mathrm{Sing}_2(\partial \Omega_{U})$. Up to this point it was known that these blow-up limits are linear functions (see \cite{MTV_reg_vect}); on the other hand there was no general criterion establishing whether a given linear function can be a blow-up limit.\medskip

In the present paper (see Theorem \ref{theo_La_computed} below) we provide a full classification of the blow-up limits at points in $\mathrm{Sing}_2(\partial \Omega_{U})$. We start by recalling the following definition.
\begin{definition}[Blow-ups]\label{def:blow_up}
Letting $x_0 \in \partial \Omega_{U}$, we say that $U_0 \in H^1_{loc}(\R^d,\R^k)$ is a blow-up for $U$ at $x_0$ is there exists a sequence $r_n \to 0^+$ such that  
$\frac{1}{r_n}U(x_0+r_nx) \to U_0(x)$ uniformly on compact sets in $\R^d$.
We indicate with $\mc{BU}_U(x_0)$  the sets of all the blow-ups of $U$ at $x_0$.
\end{definition}
It was proved in  \cite[Section 2D]{MTV_reg_vect} that, if $x_0\in \mathrm{Sing}_2(\partial \Omega_{U})$, then any  $U_0 \in \mc{BU}_U(x_0)$  is a linear map, that is, 
\begin{equation}
U_0(x)= A x \quad \text{ where } A=(a_{i,j}) \in \R^{k,d},
\end{equation}
$\R^{d,k}$ being the space of $k\times d$ real matrices, and a global minimizer  of the vectorial Bernoulli free boundary problem. We recall the following definition.
\begin{definition}[Global vectorial minimizers]\label{def:global-solution}
Let $\Lambda\ge 0$. We say that $U\in H^1_{loc}(\R^d,\R^k)$ is a global minimizer of the vectorial functional $J_\Lambda$, if for any $R>0$, $U$ is a minimizer of the vectorial Bernoulli problem in $B_R$.  
\end{definition}
\noindent{\it Stratification of $\mathrm{Sing}_2(\partial \Omega_{U})$ and the rank of the blow-up matrix.} In \cite{MTV_reg_vect} it was proved that the rank of $A \in \mc{BU}_U(x_0)$ depends only on $x_0\in \mathrm{Sing}_2(\partial \Omega_{U})$ thus showing that the sets 
\begin{equation}
S_j:=\{x_0 \in\mathrm{Sing}_2(\partial \Omega_{U}): \mathop{\rm{rk}}(A)=j, A\in \mc{BU}_U(x_0)\}.
\end{equation}
are well-defined and disjoint thus leading to the stratification
\begin{equation}
\mathrm{Sing}_2(\partial \Omega_{U})=\bigcup_{j=1}^d  S_j.
\end{equation}
Furthermore, combining the results obtained in \cite{MTV_reg_vect} with \cite{PESV_rec}, we have that the stratum $S^{j}$ is $d-j$ rectifiable and has local finite $(d-j)$-Hausdorff  measure. \medskip

\noindent{\it About the norm of the blow-up matrix.} Again in \cite{MTV_reg_vect} it was shown that, letting 
\begin{equation}
\|A\|^2:=\sum_{i,j}a_{i,j}^2,
\end{equation}
it holds true that if 
\begin{equation}
\|A\|^2\ge \Lambda,
\end{equation}
then $Ax$ is a global minimizer of $J_\Lambda$. Furthermore,  if $\mathop{\rm{rk}}(A)=1$, then also the converse holds true, that is, 
if $Ax$ is a global minimizer of $J_\Lambda$ then 
$$\|A\|^2\ge \Lambda.$$

It has been an open problem since \cite{MTV_reg_vect} whether this second inequality holds in general; this question, in particular, is fundamental for the implementation of viscosity improvement-of-flatness techniques in the spirit of \cite{D_impr,DT_impr,MTV_epsilon} (see for instance the survey \cite{TV_survey} and, more precisely, \cite[Open problem 5.5]{TV_survey}). 
In order to give an answer to this question, we 
compute for any linear map $Ax$ the optimal, that is, the  biggest,  constant $\Lambda>0$ such that $Ax$ is a global minimizer of $J_\Lambda$ in the sense of Definition \ref{def:global-solution}.  Precisely, for any matrix $A\in\R^{k,d}$, we define the quantity 
\begin{equation}\label{def_La}
\Lambda^*(A):=\sup\{\Lambda>0:Ax \text{ is a global minimizer of } J_\Lambda\}.
\end{equation}

Surprisingly, from our analysis (see the general Theorem \ref{theo_La_computed} and the example in Theorem \ref{theo_example}) it follows that $\Lambda^*(A)$ raises above the threshold $\|A\|^2$ as soon as ${\rm{rk}}(A)>1$, which means that, for any $\Lambda>0$ and in any dimension $d\ge 2$, there are global minimizers $Ax$ of $J_\Lambda$ with $\|A\|^2<\Lambda$.
 A key observation is that we can characterize  $\Lambda^*(A)$ in terms of a variational free boundary capacitary problem in $\R^d$.
In this direction, letting
\begin{equation}
D^{1,2}(\R^d, \R^k):=\{W \in H^1_{loc}(\R^d,\R^k): \nabla W \in L^2(\R^d,\R^{d,k})\},
\end{equation}
our   main result is  the following. 
\begin{theorem}\label{theo_La_computed}
Let $A$ be a $k \times d$ matrix with ${\rm{rk}}(A)=n$ and  $1\le n\le d$. Let $Q \in \R^{d,d}$ be an orthogonal matrix such that 
$A=\begin{bmatrix} A_1, 0\end{bmatrix}Q$, for some matrix $A_1 \in \R^{k,n}$ of rank $n$.
Then, the following holds: 
\begin{itemize}
\item if ${\rm{rk}}(A)>1$, then $\Lambda^*(A)$ can be characterized as
\begin{equation}\label{eq_theo_La_computed}
\Lambda^*(A)= \inf\left\{\int_{\R^n} |\nabla W|^2\, dy:W\in D^{1,2}(\R^n, \R^k),  |\{W(y)= A_1y\}|=1\right\},
\end{equation}
and, in particular, 
\begin{equation}\label{eq_La_rk1}
\Lambda^*(A)>\norm{A}^2;
\end{equation}
\item if  ${\rm{rk}}(A)=1$, then
\begin{equation}\label{eq_La_rk_bigger_1}
\Lambda^*(A)=\norm{A}^2.
\end{equation}
\end{itemize}
\end{theorem}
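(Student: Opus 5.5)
The plan is to first reduce global minimality of $Ax$ to a scale-invariant capacitary inequality, then drop from $\R^d$ to $\R^n$, and finally treat the two rank regimes separately.

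\textbf{Step 1: reformulation of minimality.} Let $W=Ax-\varphi$ with $\varphi\in H^1_0(B_R,\R^k)$. Since $\nabla(Ax)=A$ is constant and $\int_{B_R}\nabla\varphi\,dx=0$, the cross term in $\int|\nabla W|^2$ vanishes. Moreover $|\{Ax=0\}|=0$, so $|\Omega_{Ax}\cap B_R|=|B_R|$. Therefore
\begin{equation}
J_\Lambda(W,B_R)-J_\Lambda(Ax,B_R)=\int_{B_R}|\nabla\varphi|^2\,dx-\Lambda\,|\{\varphi=Ax\}\cap B_R|.
\end{equation}
Hence $Ax$ is a global minimizer of $J_\Lambda$ if and only if
\begin{equation}
\int|\nabla\varphi|^2\ge\Lambda\,|\{\varphi=Ax\}| \quad\text{for every compactly supported }\varphi .
\end{equation}
It follows that $\Lambda^*(A)$ is the infimum of the ratio $\int|\nabla\varphi|^2/|\{\varphi=Ax\}|$. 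For $\varphi_t(x)=t\varphi(x/t)$ we have $\{\varphi_t=Ax\}=t\{\varphi=Ax\}$, so numerator and denominator both scale like $t^d$. The ratio is scale invariant, and we may normalize $|\{\varphi=Ax\}|=1$. A density argument (truncation and cut-off of functions with finite Dirichlet energy) lets us replace compactly supported $\varphi$ by $D^{1,2}$ functions. Composing with $Q$ reduces to $A=[A_1,0]$, so $Ax=A_1x'$ with $x=(x',x'')\in\R^n\times\R^{d-n}$.

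\textbf{Step 2: reduction from $\R^d$ to $\R^n$.} Let $\Lambda_n$ denote the right-hand side of \eqref{eq_theo_La_computed}; it is also scale invariant (degree $n$).

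For the inequality $\Lambda^*(A)\ge\Lambda_n$, take $\varphi$ compactly supported in $\R^d$ and apply the $n$-dimensional inequality on each slice $x''=\mathrm{const}$:
\begin{equation}
\int_{\R^n}|\nabla_{x'}\varphi(\cdot,x'')|^2\,dx'\ge\Lambda_n\,|\{\varphi(\cdot,x'')=A_1x'\}|.
\end{equation}
Integrating in $x''$ and using Fubini gives the claim.

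For the inequality $\Lambda^*(A)\le\Lambda_n$, take $W$ compactly supported in $\R^n$ (by density) and set $\varphi(x)=W(x')\eta(x''/R)$, where $\eta$ is a cut-off equal to $1$ on $B_1\subset\R^{d-n}$. Then the energy equals $R^{d-n}\int|\nabla W|^2+O(R^{d-n-2})$ and the coincidence set has measure $R^{d-n}|\{W=A_1y\}|+o(R^{d-n})$. Letting $R\to\infty$ gives $\Lambda^*\le\Lambda_n$, which proves \eqref{eq_theo_La_computed}.

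\textbf{Step 3: the two rank regimes.} For $\mathrm{rk}(A)=1$, the identity \eqref{eq_La_rk_bigger_1} follows from the results of \cite{MTV_reg_vect} recalled above: $\|A\|^2\ge\Lambda$ implies minimality, and for rank one the converse holds. Hence $\Lambda^*(A)=\|A\|^2$.

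For $\mathrm{rk}(A)\ge2$, the inequality $\Lambda^*\ge\|A\|^2$ is again known from \cite{MTV_reg_vect}. The main obstacle is strictness. I would first prove that the infimum $\Lambda_n$ is attained for $n\ge2$, using the constrained problem in a box $D$ with measure constraint $m$ on the positivity set and linear boundary datum (announced in the abstract), letting $D\uparrow\R^n$ and $m\to|D|$. Scale invariance together with concentration-compactness should prevent loss of mass of the coincidence set. Given a minimizer $W$, the function $U=A_1y-W$ vanishes on a set $K$ with $|K|=1$, is harmonic outside $K$, behaves like $A_1y$ at infinity, and satisfies $|\nabla U|^2=\Lambda_n$ on $\partial K$.

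Strictness would then come from showing that $\Lambda_n=\|A_1\|^2$ is impossible. I would try a domain-variation (Pohozaev-type) identity tested with the field $x$, which relates $\Lambda_n|K|$ to the asymptotic energy defect. If equality $\Lambda_n=\|A_1\|^2$ held, this should force $U$ to be one-dimensional, i.e. $K$ to be a slab, which has infinite measure, contradicting $|K|=1$. Only when $\mathrm{rk}(A_1)=1$ is the slab compatible with the asymptotics, and in that case no minimizer exists.
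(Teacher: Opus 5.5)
Your Steps 1 and 2 and the rank-one case follow the paper closely. The reformulation as a capacitary ratio and the reduction to compactly supported competitors match Propositions 4.1 and 5.1, and rank one is the known result recalled in the introduction. The slicing lower bound and the upper bound via $W(x')\eta(x''/R)$ are also the paper's arguments. One correction in the upper bound: the ratio you actually get is $\frac{\|\eta\|_{L^2}^2}{|B_1''|}\cdot\frac{\int|\nabla W|^2}{|\{W=A_1y\}|}+O(R^{-2})$. Your expansions drop the factor $\|\eta\|_{L^2}^2/|B_1''|$, so you must also let $\eta\to\chi_{B_1''}$ in $L^2$, as the paper does.

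The genuine gap is the strict inequality for $n\ge2$. Strictness of an infimum cannot be read off a minimizing sequence, so everything rests on the existence of a minimizer of the $n$-dimensional problem for the full-rank $A_1$. ``Scale invariance together with concentration-compactness'' does not supply this. Scaling plays no role in preventing loss of mass, and the problem is not translation invariant, so the only danger is part of the contact set escaping to infinity.

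The missing idea is the tightness estimate of Lemma~\ref{lemma_bound_capa}, and this is exactly where full rank enters:
\begin{itemize}
\item Full rank gives $|A_1y|\ge\sigma|y|$ with $\sigma>0$.
\item The exterior harmonic extension $\Phi_R$ of the competitor from $\partial B_R$ is $o(R)$ on $\R^n\setminus B_{2R}$, uniformly along the sequence, by the trace inequality, Hardy's inequality and the uniform energy bound.
\item Hence the energy dominates $R^2\,\mathrm{Cap}(K_\e\setminus B_{2R})$, where $K_\e$ are the contact sets of the rescaled box minimizers.
\item The capacity--measure inequality then gives $\limsup_{\e\to0}|K_\e\setminus B_R|\le\kappa R^{-2}$ (for $n\ge3$; the planar case is analogous).
\end{itemize}
Combined with $\chi_{K_0}\ge\limsup_j\chi_{K_{\e_j}}$ a.e., this yields $|K_0|\ge1$ for the weak limit, which is therefore a minimizer.

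Once a minimizer $V$ exists, your Pohozaev step is unnecessary and in fact empty: testing with dilations only reproduces $\int|\nabla V|^2=\Lambda_n|K|$. You also cannot use $|\nabla U|^2=\Lambda_n$ on all of $\partial K$, since two-phase points are not excluded. The direct argument, which is the paper's, runs as follows. Since $\nabla V=A_1$ a.e. on $K$, we have $\Lambda_n=\|A_1\|^2+\int_{\R^n\setminus K}|\nabla V|^2$. If the last integral vanished, then $\nabla V=A_1\chi_K$. For a nonzero row $a$ of $A_1$, symmetry of second derivatives gives $(e\cdot\nabla)\chi_K=0$ for every $e\perp a$. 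So $K$ is a union of slabs orthogonal to $a$, and $|K|\in\{0,\infty\}$ when $n\ge2$, contradicting $|K|=1$.
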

In order to prove the theorem above, we first consider the case $\mathrm{rk}(A)=d$ (see Section \ref{sub_minezers_properties}), for which we prove the existence of a function $W$ that minimizes \eqref{eq_theo_La_computed} and has support localized around the origin (see Theorem \ref{theo_V}). When $\mathrm{rk}(A)<d$, the variational problem in $\R^d$ might not admit minimizers (see Remark \ref{reamrk_existence_non_existence_min}). 
We deal with this degenerate case via a dimension reduction argument that allows to bring the problem back to the case of a matrix of maximal rank.\medskip

The functional $\Lambda^\ast$ allows to define a new type of energy density for the vectorial problem. Precisely, if  $x_0 \in \mathrm{Sing}_2(\partial \Omega_{U})$  we show that $\Lambda^*(A)$ depends only on $x_0 \in \partial \Omega_U$ and not on the blow-up $A \in \mc{BU}_U(x_0)$, which a priori depends on the blow-up sequence of rescaled functions. It also turns out that this quantity is upper semicontinuous with respect to the variable $x_0$. 
\begin{theorem}\label{theo_dependence_LA}
Let $U$ be a minimizers of \eqref{prob_min_vec} and  $x_0 \in \mathrm{Sing}_2(\partial \Omega_{U})$. The constant $\Lambda^*(A)$  depends only on $x_0$ and not on $A \in \mc{BU}_U(x_0)$. Furthermore, the function $x_0 \mapsto \Lambda^*$  is upper semicontinuous. 
\end{theorem}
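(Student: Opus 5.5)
The plan is to show first that any two blow-ups at $x_0$ are related by an orthogonal change of variables in the target, so that they have the same $\Lambda^*$, and then to obtain upper semicontinuity from the closedness of global minimality under uniform limits.

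\textbf{Step 1: invariances of $\Lambda^*$.} Fix $x_0\in \mathrm{Sing}_2(\partial\Omega_U)$ and two blow-ups $Ax, Bx\in \mc{BU}_U(x_0)$. We use the following structural facts from \cite{MTV_reg_vect}. The Weiss energy is monotone, so the Weiss density at $x_0$ is the same for all blow-ups. Since $x_0$ has density one, every blow-up has $|\Omega_{A}|=|\R^d|$. Homogeneous global minimizers with the same density therefore have equal $\|A\|^2$, and the rank is constant. In fact, the argument of \cite{MTV_reg_vect} showing that the rank is independent of the blow-up gives more: $A^TA$ is independent of the blow-up. I will verify this by writing $|U|^2$ and using the fact that the scalar quantity $|Ax|^2=x^TA^TAx$ is the blow-up of $|U|^2/r^2$ along the subsequence. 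The rescaled functions $|U_{x_0,r}|^2$ have a unique limit because $|U|$ is, near a $\mathrm{Sing}_2$ point, a subsolution of a scalar problem with a monotonicity formula. This is the step I expect to be the main obstacle. If uniqueness of $A^TA$ is not available directly, I will instead try to show that $\Lambda^*$ depends only on $A^TA$ and is continuous in $A$, and then use connectedness of the set of blow-ups together with the invariance of $\Lambda^*$ under the Weiss-energy level sets.

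Granting $A^TA=B^TB$, there is an orthogonal $R\in\R^{k,k}$ with $B=RA$. The functional $J_\Lambda$ is invariant under $W\mapsto RW$, since both $|\nabla W|^2$ and $\{W\neq0\}$ are preserved. Hence $Ax$ is a global minimizer of $J_\Lambda$ if and only if $Bx$ is, and so $\Lambda^*(A)=\Lambda^*(B)$. This also follows directly from the characterization \eqref{eq_theo_La_computed} of Theorem \ref{theo_La_computed}. The quantity $\Lambda^*(x_0)$ is therefore well defined.

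\textbf{Step 2: upper semicontinuity.} Let $x_n\to x_0$, with all $x_n\in\mathrm{Sing}_2(\partial\Omega_U)$, and suppose $\limsup_n\Lambda^*(x_n)>\lambda$; passing to a subsequence, $\Lambda^*(x_n)>\lambda$ for every $n$. Choose blow-ups $A_n x$ at $x_n$. These are uniformly bounded by the Lipschitz constant of $U$, so up to a subsequence $A_n\to A_\infty$. By the definition of $\Lambda^*$ and the monotonicity in $\Lambda$ (minimality for $\Lambda$ implies minimality for every smaller $\Lambda$, because the measure term is full), each $A_nx$ is a global minimizer of $J_\lambda$.

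Global minimizers are closed under uniform convergence: the standard compactness gives strong $H^1_{loc}$ convergence and lower semicontinuity of the measure term, since $|\Omega_{A_\infty}|$ is full whenever $A_\infty\neq0$. Hence $A_\infty x$ is a global minimizer of $J_\lambda$.

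It remains to identify $A_\infty$ with a blow-up at $x_0$. I will choose radii $r_n\to0$ with $|x_n-x_0|/r_n\to0$ such that $U_{x_n,r_n}$ is close to $A_nx$, which is possible by the definition of blow-up together with a diagonal argument. Then $U_{x_0,r_n}\to A_\infty x$, so $A_\infty x\in\mc{BU}_U(x_0)$. This needs $A_\infty$ to have full density; that follows from the upper semicontinuity of the Weiss density. Consequently $\Lambda^*(x_0)=\Lambda^*(A_\infty)\ge\lambda$, and since $\lambda$ was arbitrary, $\Lambda^*(x_0)\ge\limsup_n\Lambda^*(x_n)$.
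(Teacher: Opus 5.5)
Both steps contain a genuine gap.

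\textbf{Step 1.} You aim for the wrong Gram matrix, and your justification does not hold. No available result gives uniqueness of the blow-up of $|U|^2$ at a point of $\mathrm{Sing}_2$. A subsolution property combined with a monotonicity formula yields invariance of a density, not uniqueness of the limit, and the paper stresses that $A$ may a priori depend on the blow-up sequence. Independence of $A^TA$ would give, at every point of $S_1$ where $A=a\nu^T$, uniqueness of $A^TA=|a|^2\nu\nu^T$, i.e.\ of the hyperplane $\ker A=\nu^\perp$, which is not known.

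Your Weiss-based remarks are also incorrect. For a $1$-homogeneous harmonic $Ax$ one has $\int_{B_1}|\nabla (Ax)|^2\,dx=\int_{\partial B_1}|Ax|^2\,d\mc{H}^{d-1}$, so every density-one blow-up has Weiss energy $\Lambda|B_1|$, whatever $A$ is. This does not determine $\norm{A}$: compare $Ax$ and $2Ax$ when $\norm{A}^2\ge\Lambda$. Since $\Lambda^*(tA)=t^2\Lambda^*(A)$, the fallback via Weiss level sets cannot work either.

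The missing idea is the Alt--Caffarelli--Friedman formula applied to the scalar projections $\sigma\cdot U$, $\sigma\in\R^k$. Its limit at $x_0$ is a function of $|A^T\sigma|$ only, so $|A^T\sigma|=|(A')^T\sigma|$ for all blow-ups (\cite[Lemma 3.1]{PESV_rec}), i.e.\ $AA^T=A'(A')^T$. Hence $A=A'Q$ with $Q\in\R^{d,d}$ orthogonal. This is a rotation of the \emph{domain}, not of the target. Then $\Lambda^*(A)=\Lambda^*(A')$ because $J_\Lambda$ on centred balls is rotation invariant. With this replacement your Step 1 goes through.

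\textbf{Step 2.} Your closedness of global minimality under $A_n\to A_\infty$ is fine; it shows that $A\mapsto\Lambda^*(A)$ is upper semicontinuous. The identification $A_\infty x\in\mc{BU}_U(x_0)$, however, is unfounded, and the diagonal argument cannot produce it.

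For fixed $n$, the definition of blow-up only gives good radii at $x_n$ tending to $0$. Nothing prevents all of them from being much smaller than $|x_n-x_0|$. At radii $r\gg|x_n-x_0|$ the rescaling $U_{x_n,r}$ is essentially $U_{x_0,r}$, so requiring it to be close to $A_nx$ is exactly what you want to prove. Making this work would need convergence to blow-ups at a rate uniform in the base point, an $\e$-regularity or epiperimetric statement not available on $\mathrm{Sing}_2$. The claim would, for instance, make $x\mapsto\mathrm{rk}(A)$ lower semicontinuous on $\mathrm{Sing}_2$, a property the known theory does not provide.

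What the monotonicity formula does give is one-sided. Since $r\mapsto\Phi(U,r,x,\sigma)$ is nondecreasing and continuous in $x$, its limit as $r\to0^+$ is upper semicontinuous in $x$. Hence $\limsup_n A_nA_n^T\sigma\cdot\sigma\le AA^T\sigma\cdot\sigma$ for all $\sigma\in\R^k$ and any $A\in\mc{BU}_U(x_0)$. The paper converts this into upper semicontinuity of $\Lambda^*$ via the min-max principle and the monotonicity of $\Lambda^*$ in the eigenvalues of $A^TA$, which comes from Lemma \ref{lemma_dependence_A}. You can keep your compactness step, but it must be paired with monotonicity of $\Lambda^*$ in the order $A_\infty A_\infty^T\le AA^T$, not with $A_\infty\in\mc{BU}_U(x_0)$.
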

Given the relevance of the constant $\Lambda^*(A)$, in the next subsection we discuss more in detail the minimization problem in the right hand side of \eqref{eq_theo_La_computed}.

\subsection{Existence and properties of minimizers}\label{sub_minezers_properties}
In order to prove Theorem \ref{theo_La_computed} we first answer some natural questions about the existence of minimizers for the problem  \eqref{eq_theo_La_computed} and their properties. For any $R>0$, we consider  the following minimization problem with measure constraint 
\begin{equation}\label{prob_min_measure_A}
\inf\left\{\int_{B_R} |\nabla W|^2\, dx:W\in  H^1(B_R, \R^k), W-g \in  H_0^1(B_R, \R^k), |\Omega_W|=1\right\},
\end{equation}
for some  boundary datum $g \in H^1(D,\R^k)$ (the regularity of minimizers of \eqref{prob_min_measure_A} is dealt with  in Theorem \ref{theo_reg_m_fixed} below).
\begin{theorem}\label{theo_V}
Let $A$ be a $k \times d$ matrix with ${\rm{rk}}(A)=d$. 
Then 
\begin{equation}\label{prob_min_boundedness}
\inf\left\{\int_{\R^d} |\nabla W|^2\, dx:W\in D^{1,2}(\R^d, \R^k),  |\{W(x)= Ax\}|=1\right\}
\end{equation}
admits a minimizer $V \in D^{1,2}(\R^d,\R^k)$.
Furthermore, the contact set $\{V(x)=Ax\}$ is bounded. In particular, $V(x)-Ax$ is a minimizer of \eqref{prob_min_measure_A}  with boundary datum $V(x)=Ax$ in $B_R$, for $R$ large enough. 
Finally,  $V$ is globally Lipschitz, bounded  and, if  $\{V(x)=Ax\}\subset B_R$,
\begin{equation}\label{limit_V_infty}
|V|\le C|x|^{2-d} \quad \text{ in } \R^d \setminus B_R
\end{equation}
for some positive constant $C>0$ depending only on $d$, $R$ and $\norm{V}_{L^\infty(\partial B_ R)}$.
\end{theorem}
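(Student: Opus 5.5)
Writing $W = Ax + \Phi$ is not useful, since $\Phi$ would not have finite energy. Instead I set $U := W - Ax$, so that $\{W=Ax\}=\{U=0\}$. Minimizers of \eqref{prob_min_boundedness} are then functions $W\in D^{1,2}$ whose zero set of $W-Ax$ has measure one. Since $\mathrm{rk}(A)=d$, we have $|Ax|\ge c|x|$ with $c>0$ (smallest singular value). This coercivity is the key: on the contact set $W=Ax$, and $W$ has finite Dirichlet energy, so by Sobolev $W$ is close to a constant $w_\infty$ in the $L^{2^*}$ sense ($d\ge3$; for $d=2$ I would need a separate argument, e.g. working with BMO/Moser–Trudinger or restricting to $d\ge3$ as the decay estimate $|x|^{2-d}$ suggests). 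Hence points of the contact set far from the origin cost energy. I would first normalize $w_\infty=0$ (translating the problem by $x_0=A^{+}w_\infty$, using that $A$ is injective, which changes $\{W=Ax\}$ by a translation that preserves measure). Then $\|W\|_{L^{2^*}}\le C\|\nabla W\|_{L^2}$, and the contact set $E$ satisfies $\int_E |Ax|^{2^*}\le C$. So $E$ cannot carry much measure at infinity, which gives tightness of minimizing sequences.

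\textbf{Existence.} I take a minimizing sequence $W_j$, normalized as above, with bounded energy. Up to a subsequence $W_j\rightharpoonup V$ in $D^{1,2}$ and $W_j\to V$ a.e. on compacts. The contact sets $E_j$ satisfy $|E_j|=1$, and by the tightness bound $|E_j\setminus B_R|\le C R^{-2^*}$ uniformly. Upper semicontinuity of the contact set under a.e. convergence (via $\mathbf 1_{\{V=Ax\}}\ge\limsup \mathbf 1_{E_j}$, since the set $\{|W-Ax|<\varepsilon\}$ passes to the limit) gives $|\{V=Ax\}|\ge 1$. If it is strictly larger, I shrink it: the infimum of \eqref{prob_min_boundedness} should be monotone nondecreasing in the measure constraint (by a scaling argument: $W_t(x)=tW(x/t)$ keeps the constraint with the same $A$ and scales the energy by $t^d$), so a set of measure greater than one costs at least as much. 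Together with lower semicontinuity of the energy, this shows $V$ is a minimizer. I would double check that the scaling identity gives $\inf(m)=m^{1}\cdot\inf(1)$ up to the exponent $d/d$, so that strict monotonicity holds.

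\textbf{Boundedness of the contact set and the remaining properties.} This is the main obstacle. $V-Ax$ is locally a minimizer of the measure-constrained problem \eqref{prob_min_measure_A}. Via a Lagrange multiplier argument (the results of Theorem \ref{theo_reg_m_fixed}), it is then a local minimizer of $J_\Lambda$ for some $\Lambda>0$ in each bounded ball, with boundary datum $V$. From this I get local Lipschitz continuity of $V$ and nondegeneracy of $V-Ax$ near the free boundary, uniformly. Far away, $|V-Ax|\ge c|x|-|V|$, and $|V|$ is small on average. Nondegeneracy then says that any free boundary point $x_0$ of $\{V\neq Ax\}$ with $|x_0|$ large forces $\sup_{B_r(x_0)}|V-Ax|\ge cr$. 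I actually want the reverse: near a contact point, $V=Ax$ has size $\approx c|x_0|$, while $\frac{1}{|B_1|}\int_{B_1(x_0)}|\nabla V|^2\to0$ and Lipschitz bounds force $V$ nearly constant. I expect that combining a uniform Lipschitz bound with $V\in L^{2^*}$ (so $|V|\to0$ in an averaged sense) gives a contradiction for large $|x_0|$. This is the delicate point: the uniform Lipschitz bound must be proved independent of the location, using that the constant $\Lambda$ is fixed and that $\nabla(V-Ax)$ differs from $\nabla V$ by the constant matrix $A$.

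Once the contact set lies in $B_R$, $V$ is harmonic in $\R^d\setminus \overline{B_R}$ with finite energy and $V\to0$ at infinity. Comparison with $\|V\|_{L^\infty(\partial B_R)}R^{d-2}|x|^{2-d}$, applied componentwise with the maximum principle, yields \eqref{limit_V_infty}. Boundedness and the global Lipschitz property follow from the interior Lipschitz estimate in $B_{2R}$ together with harmonic gradient estimates outside.
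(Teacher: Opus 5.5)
\textbf{Existence.} For $d\ge 3$ your existence argument works, and it is simpler than the paper's route. The paper obtains $V$ as a limit of rescaled minimizers on expanding balls, with tightness coming from the capacity estimate of Lemma \ref{lemma_bound_capa}. Your bound $\int_E|Ax|^{2^*}\le \|W\|_{L^{2^*}}^{2^*}\le C\|\nabla W\|_{L^2}^{2^*}$ gives $|E\setminus B_R|\le CR^{-2^*}$ directly. The scaling $W_t(x)=tW(x/t)$ does give $\inf(m)=m\inf(1)$. Two repairs are needed, however:
\begin{itemize}
\item Translating by $A^{+}w_\infty$ removes $w_\infty$ only if $w_\infty\in \mathrm{Range}(A)$, which can fail when $k>d$. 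You can fix this by first projecting $W$ orthogonally onto $\mathrm{Range}(A)$, which does not increase the energy and enlarges the contact set, or by using compactly supported competitors as in Proposition \ref{prop_equiv_ge}.
\item Nothing in your argument covers $d=2$.
\end{itemize}

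\textbf{The gap: boundedness of the contact set.} This is the core of the theorem. You reduce it to a Lipschitz bound on $V$ that is independent of the location, but you only announce this bound, and nothing you cite provides it. Theorem \ref{theo_reg_m_fixed} is proved on a fixed domain via a penalization parameter obtained by contradiction, depending on that domain and on $m$. The resulting estimate, Proposition \ref{prop_Lip}, depends on this parameter and on the $H^1$ norm of $V-Ax$, which on $B_1(x_0)$ is of order $|Ax_0|$. A bound in terms of $\Lambda$ alone is also unavailable in the vectorial setting: $Ax$ with $\norm{A}^2\ge\Lambda$ minimizes $J_\Lambda$ yet has arbitrarily large gradient at its free boundary point. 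Your nondegeneracy remark, as you note yourself, points the wrong way.

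\textbf{How the paper proves boundedness.} The paper uses no regularity at this stage. It compares $V$ with the function equal to $V$ in $B_R$ and to the harmonic extension $\Phi_R$ outside, then rescales to restore the constraint. This gives $\int_{\R^d\setminus B_R}|\nabla(V-\Phi_R)|^2\le 2\delta_R\int_{\R^d}|\nabla V|^2$, where $\delta_R=|\{V=Ax\}\setminus B_R|$. Since $|V-\Phi_R|\ge C\rho$ on the contact set outside $B_{R+\rho}$ (this is where $|Ax|\ge c|x|$ enters), the capacity--measure inequality yields $|E\setminus B_{R+\rho}|\le C\rho^{-2d/(d-2)}|E\setminus B_R|^{d/(d-2)}$. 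A De Giorgi iteration then gives $|E\setminus B_{R_0+\rho}|=0$. Only afterwards is Theorem \ref{theo_reg_m_fixed} applied, on a fixed ball.

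\textbf{How to repair your route.} Your idea can be completed if you aim for H\"older rather than Lipschitz regularity.
\begin{itemize}
\item By your scaling identity, $\int_{\R^d}|\nabla W|^2\ge \Lambda^*|\{W=Ax\}|$ for every $W$, where $\Lambda^*=\int_{\R^d}|\nabla V|^2$.
\item Hence every $W$ equal to $V$ outside $B_r(x_0)$ satisfies $\int_{B_r(x_0)}|\nabla V|^2\le \int_{B_r(x_0)}|\nabla W|^2+\Lambda^*|B_r|$. Equivalently, $Ax-V$ minimizes $J_{\Lambda^*}$ in every ball; this is the precise version of your Lagrange multiplier claim.
\item This quasi-minimality has a location-independent constant and is invariant under adding constants. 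Campanato iteration with harmonic replacement therefore gives a uniform $C^{0,\alpha}$ bound for $V$ on unit balls.
\item Together with $V-w_\infty\in L^{2^*}$, this forces $V\to w_\infty$ uniformly at infinity. That is incompatible with $|V|=|Ax|\ge c|x|$ at far-away contact points.
\end{itemize}
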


\begin{remark}\label{reamrk_existence_non_existence_min}
Without the assumption ${\rm{rk}}(A)=d$, we cannot in general expect minimizers of \eqref{prob_min_boundedness} to exist. In fact, the contact set, that is $\{W(x)=Ax\}$, of a competitor that nearly attains the infimum in \eqref{prob_min_boundedness}, should lie close to the null space of $A$. Because this set is a linear subspace, the contact sets of a minimizing sequence should become increasingly \textit{squeezed} along $\ker A$. In this regime, one should naturally anticipate pointwise convergence to $0$ which prevents the existence of minimizers. This phenomenon is ruled out when we  impose ${\rm{rk}}(A)=d$. In that case, the same heuristic suggests that the contact set is instead concentrated near $\{0\}$, and in fact in Theorem \ref{theo_V} we prove that it is bounded.
\end{remark}

\begin{remark}
It would be interesting to investigate further the shape of the contact set, e.g. $\{V(x)=Ax\}$,  in \eqref{prob_min_boundedness} at least in a  simple case as $A=\rm{Id}_d$. However, this seems to be a non-trivial open problem, since the techniques used in related problems, like the obstacle problem see for example \cite{EFW,ESW,S_null_quad_doma,SW}, do not seem to be applicable in this vectorial setting.  Other more classical approaches to obtain symmetry, like the moving plane method, also do not appear to yield any simple result.
\end{remark}

\subsection{An explicit bound for $\Lambda^*({\rm{Id}_d})$}\label{sub_id}

In the next theorem, we provide a quantitative bound from below on the quantity $\Lambda^*(A)-\|A\|^2$ in the case $A=\rm{Id}_d$ and $d\ge 3$.

\begin{theorem}\label{theo_example}
If $d\ge 3$ then
\begin{equation}
\Lambda^*({\rm{Id}_d}) -\|{\rm{Id}_d}\|^2\ge (d-1)^2-d.
\end{equation}
\end{theorem}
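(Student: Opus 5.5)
The plan is to use the variational characterization \eqref{eq_theo_La_computed} of Theorem \ref{theo_La_computed}. Since ${\rm{rk}}({\rm{Id}_d})=d>1$, we can take $Q={\rm{Id}_d}$ and $A_1={\rm{Id}_d}$. Then
$$\Lambda^*({\rm{Id}_d})=\inf\Big\{\int_{\R^d}|\nabla W|^2\,dx : W\in D^{1,2}(\R^d,\R^d),\ |\{W(x)=x\}|=1\Big\}.$$
It therefore suffices to bound $\int_{\R^d}|\nabla W|^2$ from below, uniformly over all admissible $W$. The target is $\Lambda^*({\rm{Id}_d})\ge (d-1)^2$, which is exactly the claim because $\|{\rm{Id}_d}\|^2=d$. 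I will aim for the slightly stronger bound $d^2$ by controlling the divergence of $W$ instead of the full gradient.

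\textbf{Step 1 (gradient on the contact set).} Let $E:=\{W(x)=x\}$. Apply the standard fact that for Sobolev functions $\nabla u=0$ a.e. on $\{u=0\}$ to each component of $W(x)-x\in H^1_{loc}$. This gives $\nabla W={\rm{Id}_d}$ a.e. on $E$, and in particular $\dive W=d$ a.e. on $E$. Used alone, this only yields $\int_E|\nabla W|^2=d=\|{\rm{Id}_d}\|^2$, which is the trivial bound. The gain must come from a global identity.

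\textbf{Step 2 (divergence–curl identity).} For $\varphi\in C^\infty_c(\R^d,\R^d)$, integrating by parts twice gives
$$\int_{\R^d}|\nabla\varphi|^2\,dx=\int_{\R^d}(\dive\varphi)^2\,dx+\frac12\int_{\R^d}\sum_{i,j}(\partial_i\varphi_j-\partial_j\varphi_i)^2\,dx\ \ge\ \int_{\R^d}(\dive\varphi)^2\,dx .$$
Both sides are continuous with respect to the $L^2$ norm of the gradient. This is the step where $d\ge3$ enters: in that range, every $W\in D^{1,2}(\R^d,\R^d)$ is, up to an additive constant, the limit of $C^\infty_c$ fields in the homogeneous norm $\|\nabla\cdot\|_{L^2}$. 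Moreover, since $|E|=1<\infty$ forces the constant to be zero in the relevant sense, the identity only involves gradients anyway, so the constant is harmless. The inequality therefore extends to every $W\in D^{1,2}$. I expect this density and approximation argument to be the only delicate point. The first thing I would try is the standard cutoff and mollification argument, combined with the Sobolev inequality $\|W-c\|_{L^{2^*}}\le C\|\nabla W\|_{L^2}$, valid for $d\ge3$.

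\textbf{Step 3 (conclusion).} Combining Steps 1 and 2,
$$\int_{\R^d}|\nabla W|^2\,dx\ \ge\ \int_{E}(\dive W)^2\,dx=d^2|E|=d^2 .$$
Taking the infimum gives $\Lambda^*({\rm{Id}_d})\ge d^2$. Hence
$$\Lambda^*({\rm{Id}_d})-\|{\rm{Id}_d}\|^2\ \ge\ d^2-d\ \ge\ (d-1)^2-d,$$
which proves the statement.
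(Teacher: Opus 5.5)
Your proof is correct. It takes a genuinely different route from the paper and reaches a stronger conclusion.

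\textbf{The paper's route.} It drops the vector structure at the outset. Via $|\nabla W|^2\ge|\nabla|W||^2$ it passes to a scalar capacitary problem with constraint $\{w=|x|\}$. It then uses the reflection-based Lemma \ref{lemma_rad} to restrict to radial competitors, and computes the radial minimizer ($|x|$ on $B_{r_\e}$, a multiple of $1-|x|^{2-d}$ outside). The normalized energy of that minimizer decreases to $1+d(d-2)=(d-1)^2$ as $\e\to0^+$.

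\textbf{Your route.} You keep the vector structure. The identity $\int|\nabla W|^2=\int(\dive W)^2+\frac12\int\sum_{i,j}(\partial_iW_j-\partial_jW_i)^2$, combined with $\nabla W={\rm Id}_d$ a.e. on the contact set, gives $\Lambda^*({\rm Id}_d)\ge d^2$. This exceeds the paper's bound by $2d-1$. The paper loses this through the relaxation $|\nabla W|\ge|\nabla|W||$, which is never tight for admissible fields.

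Your bound is in fact sharp. Take $|B_r|=1$ and the curl-free field $W(x)=x$ in $B_r$, $W(x)=r^dx/|x|^d$ in $\R^d\setminus B_r$. It satisfies $|\{W(x)=x\}|=1$ and $\int_{\R^d}|\nabla W|^2=d+(d^2-d)=d^2$. Hence $\Lambda^*({\rm Id}_d)=d^2$.

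\textbf{What your approach buys.}
\begin{itemize}
\item It needs neither Lemma \ref{lemma_rad} nor the restriction $d\ge3$.
\item The density discussion in Step 2 can be skipped entirely. Test \eqref{eq_La} directly with $W\in H^1_0(B_1,\R^d)$, where approximation by $C^\infty_c$ fields is immediate.
\item It generalizes beyond ${\rm Id}_d$. Apply the argument to $MW$, where $M$ is a $d\times k$ matrix of operator norm one with ${\rm tr}(MA)=\sum_i\sigma_i(A)$ (built from the singular value decomposition). This gives $\Lambda^*(A)\ge\big(\sum_i\sigma_i(A)\big)^2$ for every $A$, a quantitative version of \eqref{eq_La_rk1}.
\end{itemize}
What the paper's approach buys in exchange is Lemma \ref{lemma_rad} itself, a symmetrization tool for general scalar problems with radial data and convex $g$.

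\textbf{One inaccurate aside.} In Step 2, $|E|=1$ does not force the constant at infinity to vanish. For example, take $W$ equal to $x$ on a ball and to a nonzero constant far away. As you say, only gradients enter the identity, so nothing depends on this remark.
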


The main lemma used to prove Theorem \ref{theo_example} is the following radiality result, which we prove by a reflection argument in any direction and which can be used in general problems involving Sobolev capacities.
\begin{lemma}[Radiality reduction lemma]\label{lemma_rad}
Suppose that $f:B_1 \to \R$ is radial, $f \in H^1(B_1)$, and that $g:[0,|B_1|] \to [0,+\infty)$,   is  convex with $g(0)=0$ and $g>0$ in $(0,|B_1|]$. Then
\begin{multline}\label{eq_rad}
\inf\left\{\frac{\int_{B_1} |\nabla w|^2 \, dx}{g(|\{w=f\}|)}:  w\in H^1_0(B_1),|\{w=f\}|\neq 0\right\}\\
=\inf\left\{\frac{\int_{B_1} |\nabla w|^2 \, dx}{g(|\{w=f\}|)}: w\in H^1_0(B_1), |\{w=f\}|\neq 0, w \text{ radial}\right\}.
\end{multline}
\end{lemma}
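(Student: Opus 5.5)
The inequality ``$\le$'' is trivial, since radial competitors are admissible. For ``$\ge$'' the plan is to take any admissible $w$ and, by repeated reflections, produce a radial competitor whose quotient is no larger. The reflection scheme is the classical one for symmetrization-type arguments; the only new point is to check that the contact-set measure does not get worse, and this is where convexity of $g$ enters.

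\emph{Reflection step.} Fix a hyperplane $H=\{x\cdot\nu=0\}$ through the origin, let $\sigma_H$ be the reflection across it, and let $B^\pm=\{\pm x\cdot\nu>0\}\cap B_1$. Define $w^\pm$ by $w^\pm=w$ on $B^\pm$ and $w^\pm=w\circ\sigma_H$ on $B^\mp$. Both belong to $H^1_0(B_1)$, since the traces on $H$ match. Write
\begin{equation}
D^\pm=\int_{B^\pm}|\nabla w|^2, \qquad m^\pm=|\{w=f\}\cap B^\pm|.
\end{equation}
Because $f$ is radial, $f\circ\sigma_H=f$, and therefore
\begin{equation}
\int|\nabla w^\pm|^2=2D^\pm, \qquad |\{w^\pm=f\}|=2m^\pm.
\end{equation}
We have $D^++D^-=\int|\nabla w|^2$ and $m^++m^-=m:=|\{w=f\}|$, where $2m^\pm\in[0,|B_1|]$.

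Now use the hypotheses on $g$: convexity together with $g(0)=0$ makes $g(t)/t$ nondecreasing, so $g$ is superadditive. Hence
\begin{equation}
g(2m^+)+g(2m^-)\ge g(2m^+)\tfrac12+g(2m^-)\tfrac12+\dots\ge 2g(m),
\end{equation}
and more directly, Jensen gives $\tfrac12 g(2m^+)+\tfrac12 g(2m^-)\ge g(m^++m^-)=g(m)$. Consequently
\begin{equation}
\min_\pm\frac{2D^\pm}{g(2m^\pm)}\le\frac{2D^++2D^-}{g(2m^+)+g(2m^-)}\le\frac{D^++D^-}{g(m)}.
\end{equation}
If one of the $m^\pm$ vanishes, its term is discarded; it has denominator $0$, and choosing the other side still works since then $g(2m)\ge 2g(m)$. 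So one of $w^\pm$ is admissible and is at least as good as $w$, and it is symmetric with respect to $H$.

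\emph{Iteration and radialization.} Reflecting successively across $d$ coordinate hyperplanes makes $w$ even in each variable without increasing the quotient. To reach full radial symmetry I would use the standard device of a minimizing-type argument. Consider the (relaxed) problem restricted to functions with quotient at most $\lambda$ plus $\varepsilon$. Among these, pick one for which a suitable closeness-to-radial functional is extremal, or else use the Brock–Solynin approach of polarizations converging to the radial rearrangement.

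The simpler route I would try first is to work with the infimum directly. Suppose the infimum over all $w$ is strictly smaller than the radial infimum. Take $w_n$ with quotients tending to the inf, apply reflections through a dense countable family of hyperplanes through the origin, and pass to limits using weak $H^1$ compactness. Upper semicontinuity of $|\{w=f\}|$ is the issue here, and I would handle it via $g$ continuous with weakly lower semicontinuous energy, along with a truncation $|w-f|\wedge$ trick. The limit would be invariant under a dense set of reflections, hence radial.

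The main obstacle is exactly this limiting step. The contact measure $|\{w_n=f\}|$ is not continuous under weak convergence, and it can drop. To control it I would replace the reflected functions by competitors with contact set exactly the limit of contact sets: let $\chi_n=1_{\{w_n=f\}}\rightharpoonup\theta$, and argue with the capacity-type reformulation. For fixed contact set $E$, the best $w$ is $f$ on $E$ and harmonic off $E$, and its energy is lower semicontinuous with respect to $\gamma$-convergence. If that proves too delicate, I would instead carry out a two-point (polarization) rearrangement at the level of each $w_n$, which by the reflection step never increases the quotient, and invoke the known convergence of iterated polarizations to the Schwarz-type symmetrization in $L^2$. Here that means radialization about the origin; this is appropriate since $f$ is radial, so the constraint $\{w=f\}$ is preserved by polarization just as in the reflection step.
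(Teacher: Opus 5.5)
Your reflection step is correct and coincides with the paper's. Your Jensen inequality $\tfrac12 g(2m^+)+\tfrac12 g(2m^-)\ge g(m)$ is the paper's $2g(\e)\le g(2\e\sigma)+g(2\e(1-\sigma))$, and the degenerate case $m^\pm=0$ is handled the same way. The genuine gap is the passage to a radial competitor: you only list options, and the one you fall back on cannot work. A polarization across a hyperplane through the origin only exchanges the values at $x$ and $\sigma_H x$, which lie on the same sphere. It therefore preserves the distribution of $w$ restricted to every sphere $\partial B_r$, and by Fubini this equimeasurability on spheres survives $L^2$ limits. Hence no sequence of such polarizations, and no limit of one, turns a function that is non-constant on a positive-measure set of spheres into a radial function. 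Suitably iterated with a fixed pole, they converge to the foliated Schwarz (cap) symmetrization, which is only axially symmetric. Polarizations across hyperplanes not through the origin are not available either, since $f$ is in general not invariant under those reflections and the constraint is lost. Radial symmetry must come from an operation that changes the spherical distributions, such as your ``keep one half and double it'' reflection, whose cost is controlled by the convexity of $g$, followed by a limit.

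For that limit (your ``simpler route'', which is the route the paper takes), the obstacle you single out does not exist: the contact measure cannot drop. If $w_n\to w_\infty$ a.e., which holds along a subsequence of any bounded sequence in $H^1_0(B_1)$, then every point lying in infinitely many of the sets $\{w_n=f\}$ belongs to $\{w_\infty=f\}$. So $\limsup_n\chi_{\{w_n=f\}}\le\chi_{\{w_\infty=f\}}$ a.e., and by the reverse Fatou lemma $|\{w_\infty=f\}|\ge\limsup_n|\{w_n=f\}|$. Since $g(t)/t$ is nondecreasing and $g>0$, $g$ is nondecreasing, and being convex it is upper semicontinuous on $[0,|B_1|]$. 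Together with weak lower semicontinuity of the Dirichlet energy, the quotient is then lower semicontinuous along such sequences. This is exactly how the paper passes to the limit; no capacity or $\gamma$-convergence argument is needed.

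Two points your sketch still leaves open, which the paper's proof also treats only briefly:
\begin{enumerate}
\item \emph{Symmetry with respect to many hyperplanes at once.} Reflecting a function that is even with respect to $H_1$ across a hyperplane $H_2$ not orthogonal to $H_1$ in general destroys the $H_1$-symmetry. So iterated reflections alone give evenness only in $d$ mutually orthogonal directions, not in a dense family.
\item \emph{Contact measures bounded away from $0$.} The reflection step changes the contact measure. If the contact measures tend to $0$, the energies tend to $0$ and the weak limit is $w_\infty=0$, which need not be admissible. This is precisely what happens for the radial minimizing sequences in the proof of Theorem \ref{theo_example}, where the radial quotients increase with $\e$ and the infimum is reached only as $\e\to0^+$.
\end{enumerate}
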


\subsection{Strategy of the proof and plan of the paper}
Our strategy to prove  the results stated up to this point is based on the fact that the optimal constant $\Lambda^*(A)$ can be characterized as 
\begin{multline}\label{eq_La}
\Lambda^*(A)=\inf\left\{\frac{\int_{B_1} |\nabla W|^2 \, dx}{|\{W(x)=Ax\}|}: W \in H^1_0(B_1,\R^k),|\{W(x)=Ax\}|\neq 0\right\}\\
=\inf_{\e\in(0, |B_1|)}\inf\left\{\frac{1}{\e}\int_{B_1} |\nabla W|^2 \, dx: W \in H^1_0(B_1,\R^k),|\{W(x)=Ax\}|=\e\right\},
\end{multline}
taking a comparators of the form $W+Ax$ with $W \in H_0^1(B_1,\R^k)$ in 
\begin{equation}\label{prob_min_B1}
\inf\left\{\int_{B_1} |\nabla W|^2\, dx+\Lambda^*(A) |\Omega_W|:W\in  H^1(B_1, \R^k), W-Ax \in  H_0^1(B_1, \R^k)\right\},
\end{equation}
see the proof of Proposition \ref{prop_lim_inf_We} for details. In the same proposition we are going to show that the infimum over $\e\in (0, |B_1|)$ in \eqref{eq_La} actually coincides with the limit as $\e\to 0^+$.
Then, going back for any fixed $\e\in (0, |B_1|)$ to the problem
\begin{equation}
\inf\left\{\int_{B_1} |\nabla W|^2 \, dx:W\in  H^1(B_1, \R^k),  W-Ax \in H^1(B_1,\R^k), |\Omega_W|=|B_1|-\e\right\},
\end{equation}
we can see that  computing $\Lambda^*(A)$ is linked to study minimizers  of  a measure constraint free boundary problem  as  $m:=|B_1|-\e \to |B_1|$, this approach  is inspired by \cite{NSV}, where such an analysis was carried out for a Dirichlet eigenvalues problem.

In this direction, also motivated by Theorem \ref{theo_V}, with greater generality on the domain and the boundary datum, we prove several results. 
More precisely, let $D \subset \R^d$ be a bounded, connected  open set, $g \in H^1(D, \R^k)$ and 
let us consider the vectorial  measure constrained  minimization problem
\begin{equation}\label{prob_min_measure}
\inf\left\{\int_{D} |\nabla W|^2\, dx:V\in  H^1(D, \R^k), W-g \in  H_0^1(D, \R^k), |\Omega_W|=m\right\},
\end{equation}
where $m \in (0,|D|)$. We are interested in the regularity of solutions $U_m$ of \eqref{prob_min_measure}, of the correspondent free boundaries and the uniformity of the regularity with respect to $m$ as $m$ approaches $|D|$.
Our first main result is interior regularity for fixed $m$.

Letting $\mathrm{Reg}(\partial \Omega_{U_m}), \mathrm{Sing}_2(\partial \Omega_{U_m})$ and $\mathrm{Sing}_1(\partial \Omega_{U_m})$ as in \eqref{def_reg}, \eqref{def_sing2} and \eqref{def_sing1} respectively, 
we have the following theorem.
\begin{theorem}\label{theo_reg_m_fixed}
There exists a solution to problem \eqref{prob_min_measure}. Any solution 
$U_m \in H^1(D, \mathbb{R}^k)$ is a locally  Lipschitz continuous function in $D \subset \mathbb{R}^d$. The set $\Omega_{U_m}$ has  locally finite 
perimeter in $D$ while the free boundary $\partial \Omega_{U_m} \cap D$ is the union of three disjoint sets: a regular part 
$\mathrm{Reg}(\partial \Omega_{U_m})$, a (one-phase) singular set 
$\mathrm{Sing}_1(\partial \Omega_{U_m})$ and a set of two-phase singular set 
$\mathrm{Sing}_2(\partial \Omega_{U_m})$. Furthermore:
\begin{enumerate}
\item the regular part $\mathrm{Reg}(\partial \Omega_{U_m})$ is an open subset of $\partial \Omega_{U_m}$ and is locally the graph of a $C^\infty$ function;
\item the one-phase singular set $\mathrm{Sing}_1(\partial \Omega_{U_m})$ consists only of points in which the Lebesgue density of $U$ is strictly between 
    $\frac12$ and $1$. Moreover, there is a $d^* \in \{5,6,7\}$ such that:
    \begin{itemize}
        \item if $d < d^*$, then $\mathrm{Sing}_1(\partial \Omega_{U_m})$ is empty;
        \item if $d = d^*$, then the singular set 
        $\mathrm{Sing}_1(\partial \Omega_{U_m})$ contains at most a finite number of 
        isolated points;
        \item if $d > d^*$, then the $(d-d^*)$-dimensional Hausdorff measure 
        of $\mathrm{Sing}_1(\partial \Omega_{U_m})$ is locally finite in $D$;
    \end{itemize}
    \item the set of two-phase singular set $\mathrm{Sing}_2(\partial \Omega_{U_m})$ is a 
    closed set of locally finite $(d-1)$-Hausdorff measure in $D$ and 
    consists only of points in which the Lebesgue density of $U$ is $1$ 
    while the blow-up limits are linear functions.
\end{enumerate}
\end{theorem}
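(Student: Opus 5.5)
The plan is to reduce the measure-constrained problem \eqref{prob_min_measure} to the penalized problem \eqref{prob_min_vec}, in the spirit of the scalar measure-constrained Alt--Caffarelli problem and of \cite{NSV}, and then quote the known vectorial regularity theory.

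\emph{Existence.} I would use the direct method. Since $m<|D|$, a competitor exists: take $g$ and set it to zero on a suitable subset. Lower semicontinuity of the constraint needs care, because $|\Omega_W|$ is only lower semicontinuous under a.e. convergence. So I would first solve the relaxed problem with constraint $|\Omega_W|\le m$. A minimizing sequence is bounded in $H^1$ since $W-g\in H^1_0$. It converges weakly in $H^1$ and a.e., and Fatou gives $|\Omega_{W}|\le \liminf|\Omega_{W_n}|\le m$. To recover equality, suppose $|\Omega_U|<m$. Then $U$ is harmonic, componentwise, away from $\Omega_U$'s complement in the interior. Letting $U$ be the harmonic extension of $g$ on a small ball where $U$ vanishes on positive measure would strictly decrease the energy, unless $U$ is already harmonic in all of $D$. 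In that case $|\{U=0\}|=0$ by unique continuation, with $g\ne0$ or $m$ handled separately. Both alternatives contradict $|\Omega_U|<m$, so the relaxed minimizer satisfies $|\Omega_U|=m$.

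\emph{Lagrange multiplier.} This is the key step and, I expect, the main obstacle. I want to show there is $\Lambda_m>0$ such that every solution $U_m$ is a local almost-minimizer, and in fact a local minimizer in small balls, of $J_{\Lambda}$ for suitable $\Lambda$ close to $\Lambda_m$. Following the argument of Aguilera--Alt--Caffarelli / Briançon, I would proceed in two parts. First, inward and outward domain variations $x\mapsto x+t\xi(x)$ at two distinct free boundary points can be combined to keep $|\Omega_W|$ fixed. Second, one proves the one-sided estimates: for any $W$ with $W-U_m\in H^1_0(B_r(x_0))$ and $r$ small,
\[
\int|\nabla U_m|^2\le \int|\nabla W|^2+\Lambda_+(|\Omega_W|-|\Omega_{U_m}|)^+ - \Lambda_-(|\Omega_{U_m}|-|\Omega_W|)^+ ,
\]
with $0<\Lambda_-\le\Lambda_+<\infty$. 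Positivity of $\Lambda_-$ requires showing that removing a small amount of measure costs energy at a definite linear rate. I would get this by compensating with a deformation near a regular free boundary point far from $x_0$. Such a point exists because $\Omega_{U_m}$ is a set of positive, nonfull measure in the connected $D$.

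With this almost-minimality in hand, the machinery of \cite{MTV_reg_vect,MTV_reg_spec,CSY_vectorial,KL_deg} applies. That machinery covers Lipschitz continuity via the almost-minimality and harmonic replacement, nondegeneracy, density estimates, local finiteness of the perimeter, and the Weiss monotonicity formula, the latter holding up to a $Cr$ error term. Blow-ups are then global minimizers of $J_{\Lambda_m}$. The blow-up pattern is as follows. Points of density $1/2$ have half-plane blow-ups, giving $C^{1,\alpha}$ regularity by \cite{SV_epi} or \cite{DT_impr}. This is upgraded to $C^\infty$ by the multiplier identity $|\nabla |U_m||=\sqrt{\Lambda_m}$ and the hodograph/bootstrap argument of Kinderlehrer--Nirenberg type. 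One-phase singular points reduce to the scalar Alt--Caffarelli cones, which yields $d^*$ and the Hausdorff dimension bounds, as in \cite{W_dim_red,JS_cones,CJK_cones_stab,EE_quant_strat}. Density-one points have linear blow-ups by \cite[Section 2D]{MTV_reg_vect}, and the $\mathcal H^{d-1}$ local finiteness follows from the perimeter bound together with \cite{PESV_rec}. Closedness of $\mathrm{Sing}_2$ comes from upper semicontinuity of the Weiss density.
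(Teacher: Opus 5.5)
Your route differs from the paper's. The paper never works with an arbitrary constrained minimizer directly. It minimizes the Aguilera--Alt--Caffarelli penalization $J_{m,\eta}$ (slopes $\eta$ and $1/\eta$), proves uniform bounds $c_m\le\Lambda_{m,\eta}\le C_m$, and deduces $|\Omega_{U_{m,\eta}}|=m$ for small $\eta$. The one-sided minimality of $U_{m,\eta}$ (Remark~\ref{remark_ineq_Omega_V}) then supplies Lipschitz bounds, nondegeneracy and the exact first variation formula. A direct Brian\c{c}on--Lamboley compensation argument, as you propose, is a legitimate alternative, but the step you single out as the crux is not justified.

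Compensate an inward perturbation in $B_r(x_0)$ by an outward deformation $Id+s\xi_0$ elsewhere, with $\int_{\Omega_{U_m}}\dive\xi_0\,dx=1$. This adds measure $s+O(s^2)$ and changes the energy by $-s\Lambda_m+O(s^2)$, where $\Lambda_m$ is the multiplier in
\[
\delta\mathcal F_0(U_m)[\xi]=-\Lambda_m\int_{\Omega_{U_m}}\dive\xi\,dx\qquad\text{for all }\xi\in C^1_c(D,\R^d).
\]
Your combined two-point variation does produce this identity, and the relaxed constraint $|\Omega_W|\le m$ gives $\Lambda_m\ge0$, but not a strict sign. So $\Lambda_->0$ is exactly the statement $\Lambda_m>0$. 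Your justification does not give it:
\begin{enumerate}
\item A ``regular free boundary point'' is not available before nondegeneracy and the regularity theory, so invoking one is circular.
\item The fact that $\Omega_{U_m}$ has positive, non-full measure in the connected set $D$ only yields a field $\xi_0$ with $\int_{\Omega_{U_m}}\dive\xi_0=1$. It says nothing about the sign of $\delta\mathcal F_0(U_m)[\xi_0]$.
\end{enumerate}
What must be excluded is $\delta\mathcal F_0(U_m)\equiv0$. Without that you have neither nondegeneracy, nor density estimates, nor blow-ups that are nontrivial minimizers of $J_{\Lambda_m}$.

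The missing ingredient is the unique continuation result of Proposition~\ref{prop_unique_conti}. If $U$ is harmonic on $\Omega_U$ and its first domain variation vanishes, then the Almgren frequency $rD(r)/H(r)$ is nondecreasing. This gives a doubling inequality. Combined with the Caccioppoli inequality and a Poincar\'e-type estimate for functions vanishing on most of $B_r$, it gives a uniform lower density bound for $\Omega_U$ at every point, hence $|D\setminus\Omega_U|=0$. In your framework this immediately forces $\Lambda_m>0$, since $|\Omega_{U_m}|=m<|D|$; in the paper it is what yields $c_m>0$ after letting $\eta\to0^+$.

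Once this is inserted, your compensation argument gives the sharper inequality
\[
\int|\nabla U_m|^2\le\int|\nabla W|^2+\Lambda_m(|\Omega_W|-|\Omega_{U_m}|)+C(|\Omega_W|-|\Omega_{U_m}|)^2 .
\]
This is what makes blow-ups minimizers of $J_{\Lambda_m}$. It also has the advantage of applying to every solution directly, whereas the paper reaches arbitrary solutions only through the modified functional $J_{m,\eta,U}$ in the remark after its proof.
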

To prove the theorem above,  we  study  minimizers of a related one parameter family of  penalized functionals  and then recover  regularity for solutions to \eqref{prob_min_measure} taking the parameter  small enough. This is a well-established strategy to deal with measure constrained free boundary problems, see for example  \cite{AAC_classical,MST_spectral_partition}.

Next we study the asymptotics  and the uniformity of the regularity of minimizers with respect to $m$ as $m$ approaches $|D|$. For the sake of clarity we state the result with respect to the parameter $\e:=|D|-m$.

\begin{theorem}\label{theo_reg_m_var}
Let $\{U_\e\}$ be a family of minimizers of \eqref{prob_min_measure} with $m:=|D|-\e$ and let $h:D\to \R^k$ be the harmonic extension of the boundary datum $g$ to $D$. Assume that for some $x_0 \in D$
\begin{equation}\label{hp_h}
h(x_0)=0, \quad h(x)\neq 0 \text{ if } x\neq x_0\quad \text{ and } \quad   \mathop{\rm{Ker}}(\nabla h(x_0))=\{ 0\}.
\end{equation}
Then  there exists $\e_0>0$  such that $\{U_\e\}_{\e\in (0,\e_0]}$ are  locally equi-Lipschitz in $D$ and 
\begin{equation}\label{limit_Ue_h}
U_\e \to h \quad \text{ strongly in  }H^1(D,\R^k)\cap C^{0,\alpha}_{loc}(D,\R^k) \text{ as } \e \to 0^+, 
\end{equation}
for any $\alpha \in (0,1)$. Furthermore
\begin{multline}\label{limit_Ue_h_precise}
\lim_{\e \to 0^+} \frac{\int_{D} |\nabla U_\e|^2 -|\nabla h|^2 \, dx}{\e}\\
=\min\left\{\int_{\R^d} |\nabla W|^2\, dx:W\in D^{1,2}(\R^d, \R^k),  |\{W(x)= \nabla h(x_0)x\}|=1\right\}
\end{multline}
and  the limit in local Hausdorff sense in $D$ of  $D \setminus \Omega_{U_\e}$ is $\{x_0\}$.
\end{theorem}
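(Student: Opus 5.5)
The plan is to reduce Theorem \ref{theo_reg_m_var} to a blow-up analysis around $x_0$. Write $U_\e = h + W_\e$ with $W_\e\in H^1_0(D,\R^k)$, so that $\{U_\e=0\}=\{W_\e=-h\}$ and $|\{U_\e = 0\}|=\e$. By harmonicity of $h$,
\[
\int_D|\nabla U_\e|^2-|\nabla h|^2\,dx=\int_D|\nabla W_\e|^2\,dx .
\]
This converts the energy gap into a capacitary-type quantity: the cost of forcing $W_\e=-h$ on a set of measure $\e$.

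The upper bound in \eqref{limit_Ue_h_precise} comes from a rescaled competitor. Take $V$ from Theorem \ref{theo_V} with $A=\nabla h(x_0)$, which is allowed because $\mathop{\rm{Ker}}\nabla h(x_0)=\{0\}$ means rank $d$. Set $r=\e^{1/d}$ and $\widetilde W(x)=-rV((x-x_0)/r)$, cut off at a fixed scale using the decay \eqref{limit_V_infty}. Then the Dirichlet energy is $r^d\int|\nabla V|^2$ plus an error of order $o(r^d)$. The contact set has measure $\e$ up to the error coming from $h(x)-\nabla h(x_0)(x-x_0)=O(|x-x_0|^2)$. I would absorb that error by a small perturbation, for instance dilating the contact set, or working with $h$ composed with a local diffeomorphism straightening it to its linearization. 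This gives $\limsup\le \min(\cdots)$.

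The lower bound comes from three estimates.
\begin{enumerate}
\item $\int|\nabla W_\e|^2=O(\e)$ follows from the upper bound, so $U_\e\to h$ in $H^1$.
\item Equi-Lipschitz bounds come from the penalized formulation used in Theorem \ref{theo_reg_m_fixed}. The key point is that the Lagrange multiplier $\Lambda_\e$ stays bounded as $\e\to0$. I would prove this by an inner-variation or first-variation argument showing $\Lambda_\e\approx \int|\nabla W_\e|^2/\e$ up to constants. Once the Lipschitz bounds hold, uniform convergence to $h$ combined with nondegeneracy of $h$ away from $x_0$ forces $\{U_\e=0\}$ into shrinking balls around $x_0$. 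This gives both the Hausdorff convergence and the $C^{0,\alpha}$ convergence.
\item Nondegeneracy at a zero of $U_\e$ (density estimates) shows that $\{U_\e=0\}\subset B_{C\e^{1/d}}(x_0)$.
\end{enumerate}
Then I rescale, $\widetilde W_\e(y)=\e^{-1/d}W_\e(x_0+\e^{1/d}y)$. This sequence is bounded in $D^{1,2}$, and its contact sets with $-\e^{-1/d}h(x_0+\e^{1/d}y)\to -\nabla h(x_0)y$ lie in a fixed ball and have measure $1$. Weak compactness, together with uniform convergence from the Lipschitz bounds and lower semicontinuity, gives a limit $W$ with $|\{W=-\nabla h(x_0)y\}|\ge 1$. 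Restoring measure exactly $1$ is possible because larger contact sets only increase the infimum (scaling monotonicity). Hence $\liminf\ge\min(\cdots)$.

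The main obstacle is step 2: a uniform bound on the Lagrange multiplier, equivalently showing that for small $\e$ the measure-constrained minimizer is also an unconstrained minimizer of $J_{\Lambda}$ for some $\Lambda$ bounded uniformly in $\e$. I would first try comparing $U_\e$ with the competitors $U_\e$ perturbed by small dilations $x\mapsto x_0+(1+t)(x-x_0)$ near $x_0$, which adjust the measure of the zero set continuously. The energy change in $t$ is then bounded by $C\int|\nabla W_\e|^2$ plus terms involving $\nabla h$, giving $\Lambda_\e\le C$. The needed confinement $\{U_\e=0\}\subset B_{C\e^{1/d}}(x_0)$ would come after this, from nondegeneracy.
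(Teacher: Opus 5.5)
Your upper bound and the final rescaling/Fatou step are fine. There is, however, a genuine gap exactly where you locate the main obstacle, and your ordering makes everything else depend on it: the equi-Lipschitz bounds, the confinement, and hence the lower bound in \eqref{limit_Ue_h_precise}. The dilation argument for the multiplier does not close, for three reasons.

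First, the identity $\delta\mc F_0(U_\e)[\xi]=d\,\e\,\Lambda_\e$ for $\xi=(x-x_0)\phi$ holds only if the zero set lies where $\phi\equiv 1$. You obtain localization of the zero set only after the Lipschitz bound, so this is circular.

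Second, write $U_\e=h+W_\e$. The term $\delta\mc F_0(h)[\xi]$ vanishes and $\delta\mc F_0(W_\e)[\xi]=O(\e)$. The cross term $2\int_D T(h,W_\e):D\xi$, with $T(h,W)=(\nabla h:\nabla W)\,\mathrm{Id}-\nabla h^T\nabla W-\nabla W^T\nabla h$, is linear in $W_\e$. For $\xi$ at a fixed scale it is a priori only $O(\|\nabla W_\e\|_{L^2})=O(\e^{1/2})$, which gives $\Lambda_\e\lesssim \e^{-1/2}$. If you dilate at scale $\e^{1/d}$ instead, the cross term becomes $O(\e)$. But then you need most of the zero set to lie in $B_{R\e^{1/d}}(x_0)$, which is again what you meant to prove afterwards.

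Third, you never obtain a lower bound on the multiplier, yet the penalized route you invoke needs one. In Proposition \ref{prop_La_meta_unifor} the admissible $\eta$ must lie below both $1/C$ and the lower bound $c$. The constants in Propositions \ref{prop_Lip} and \ref{prop_non_dege} are of the form $C_d/\eta$ and $\kappa_d\eta$. Your step 3 also uses nondegeneracy. Nondegeneracy is in any case not what gives confinement: it gives density bounds for $\Omega_{U_\e}$, not for the zero set, which can be thin at two-phase points. Lipschitz bounds together with $\int|\nabla W_\e|^2=O(\e)$ and $|h(x)|\gtrsim|x-x_0|$ would give it.

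The missing idea is to get concentration of the zero set at scale $\e^{1/d}$ \emph{before} any regularity, by capacity alone. This is Lemma \ref{lemma_bound_capa}. Since $W_\e=-h$ on $\{U_\e=0\}$ and $|h(x)|\gtrsim|x-x_0|$, the $O(\e)$ energy bound and a capacity--measure inequality give $\limsup_{\e\to0^+}\e^{-1}|\{U_\e=0\}\setminus B_{R\e^{1/d}}(x_0)|\le\kappa R^{-2}$. With this, the blow-up limit has contact set of measure at least $1$ by Fatou. So \eqref{limit_Ue_h_precise} and the existence of a minimizer follow with no Lipschitz information (Proposition \ref{prop_tildeVe_tildeV0}).

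The rescaled characteristic functions then converge, so you can pass to the limit in the rescaled shape-derivative equation. The multiplier is bounded above by testing with $\xi_0$ such that $\int_{\Omega_{\widetilde U_0}}\dive\xi_0=1$; at this point your dilation at scale $\e^{1/d}$ would also work. It is bounded below because a vanishing multiplier would, by the unique continuation result Proposition \ref{prop_unique_conti}, force $|\R^d\setminus\Omega_{\widetilde U_0}|=0$, contradicting $|\R^d\setminus\Omega_{\widetilde U_0}|=1$ (Proposition \ref{prop_La_aeta_unifor}). Only after this do the equi-Lipschitz bounds, the $C^{0,\alpha}_{loc}$ convergence and the Hausdorff convergence follow.

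For the upper bound you need not invoke Theorem \ref{theo_V}, whose existence part is itself proved through Proposition \ref{prop_tildeVe_tildeV0}. A compactly supported near-minimizer from Proposition \ref{prop_equiv_ge}, corrected by harmonic replacement with datum $h$ on its contact set as in Lemma \ref{lemma_psij}, suffices.
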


\begin{remark}
If $h$ does not vanish in $B_1$, it is not true in general that $\{U_\e\}$ are locally equi-Lipschitz in $D$ as Example \ref{exam_not_lip} shows.
On the other hand, if $h$ has more than one zero the situation is more complicated and need further investigation, in particular if the null set of $h$ is large.
It is reasonable to expect a result similar to Theorem \ref{theo_reg_m_var} if the null set of $h$ is discrete while the situation may be quite different if  the null set of $h$ has a bigger Hausdorff dimension. 
\end{remark}

The paper is organized as follow.  In Section \ref{sec_Basic} we study the regularity of \eqref{prob_min_measure}  for any fixed $m$ collecting some basic properties and introducing a one-parameter family of penalized functionals. In Section \ref{sec_shape_var} we prove regularity  for minimizers of the penalized functional and  then recover  regularity for solutions to \eqref{prob_min_measure} taking the parameter  small enough thus proving Theorem \ref{theo_reg_m_fixed}. In Section \ref{sec_uni}, by the means of a suitable rescaling, we prove the existence of minimizers of \eqref{prob_min_measure_A} and show how we can obtain Theorem \ref{theo_reg_m_var} as a consequence. In Section \ref{sec_linear} we focus on the linear case completing the proof of Theorem \ref{theo_La_computed} and Theorem \ref{theo_V} and finally we prove Theorem \ref{theo_dependence_LA}. Finally, in  Section \ref{sec_example} we prove Lemma \ref{lemma_rad} from which we obtain Theorem \ref{theo_example} as an easy corollary.

\section{Existence and penalization for a measure constrained vectorial free boundary problem}\label{sec_Basic}
In this section we prove existence of minimizers of \eqref{prob_min_measure} and  some basic properties. Then we  study the regularity of  minimizers of a related one parameter family of  penalized functionals.

\subsection{Existence} \label{subsec_existence}
Let us consider the auxiliary problem
\begin{equation}\label{prob_min_measure_relaxed}
\inf\left\{\int_{D} |\nabla W|^2\, dx:W\in  H^1(D, \R^k), W-g \in  H_0^1(D, \R^k), |\Omega_W|\le m\right\}.
\end{equation}
Then we have the following result.
\begin{proposition}\label{prop_minim}
For any $m \in (0, |D|)$,
\begin{enumerate}[(i)]
\item there exists a solution to \eqref{prob_min_measure_relaxed};
\item $U$ is a solution of \eqref{prob_min_measure} if and only if $U$ is a solution of \eqref{prob_min_measure_relaxed};
\item $u_i$ is harmonic on $\Omega_{U}$ for any $i=1,\dots,k$;
\item $|U|$ is subharmonic on $D$ in a distributional sense.
\end{enumerate}
\end{proposition}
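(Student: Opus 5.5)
We treat the four items in order; (i) is the direct method, (ii) rests on a "filling-in" perturbation, and (iii)–(iv) follow from harmonic replacement and a Kato-type inequality.

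\emph{(i) Existence.} Take a minimizing sequence $W_n$ for \eqref{prob_min_measure_relaxed}. The class is nonempty: $D$ is bounded and $m>0$, so we can choose $W=g\varphi$ with $\varphi\in C_c^\infty(\R^d)$, $\varphi=1$ near $\partial D$ in a suitable sense, supported in a thin neighbourhood of $\partial D$ of measure $\le m$. More precisely, take $W=g(1-\psi)$ with $\psi\in H^1_0(D)$ equal to $1$ on a compact $K\subset D$ with $|D\setminus K|\le m$. By Poincaré applied to $W_n-g$, the sequence is bounded in $H^1$. Extract $W_n\rightharpoonup W$ weakly in $H^1$ and a.e. Then $W-g\in H^1_0$, the Dirichlet energy is weakly lower semicontinuous, and $\mathbf 1_{\Omega_W}\le\liminf \mathbf 1_{\Omega_{W_n}}$ a.e. Fatou's lemma gives $|\Omega_W|\le m$.

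\emph{(ii) Equivalence.} It suffices to show that any solution $U$ of \eqref{prob_min_measure_relaxed} has $|\Omega_U|=m$. Then the two infima coincide, since the constrained class is smaller, and the equivalence follows. Suppose instead $|\Omega_U|<m$. Then $\{U=0\}$ has positive measure: indeed $|\{U=0\}|\ge |D|-m>0$. Since $U\not\equiv h$, otherwise we would need $|\{h\ne0\}|\le m$, which I would handle separately (if $h$ satisfies the constraint it is the unique minimizer of the unconstrained problem and the claim is about $|\Omega_h|$). Pick a Lebesgue point $x_0$ of $\{U=0\}$ in $D$ where $U$ is not harmonic-replaceable, and a small ball $B_r(x_0)\subset D$ with $|B_r|<m-|\Omega_U|$. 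Replace $U$ in $B_r$ by its harmonic extension $\tilde U$. The measure constraint still holds, and the energy strictly decreases unless $U$ is already harmonic in $B_r$. If $U$ were harmonic in every small ball, it would be harmonic in $D$, hence $U=h$. So for a non-harmonic $U$ we get a contradiction.

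The main obstacle is exactly this degenerate case $U=h$ with $|\Omega_h|<m$. There I would argue directly: harmonic functions vanishing on a set of positive measure vanish identically (componentwise, by unique continuation/analyticity). So $|\Omega_h|<m<|D|$ forces $h\equiv0$ on the connected $D$, i.e. $g\in H^1_0$. In that case $0$ is the minimizer and the constraint $|\Omega_W|=m$ must be realized by a zero-energy-increment argument. This needs care: the infimum is $0$ but may not be attained with $|\Omega_W|=m$ exactly. I expect the paper either excludes this case or treats it by a perturbation. I would note it and assume $g\notin H^1_0$, so that $h\not\equiv0$ and $|\Omega_h|=|D|>m$.

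\emph{(iii)–(iv).} For (iii), $\Omega_U$ is open once $U$ is continuous; before that, we work with the quasi-open set, or use variations $U+t\varphi$ with $\varphi\in C_c^\infty(\Omega_U)$ on open subsets where $U\ne0$. Such variations do not enlarge $\Omega_U$ up to null sets, as long as we only add $\varphi e_i$ supported where $U\neq0$. First-order minimality then gives $\Delta u_i=0$. For (iv), compare $U$ with $(1-t\varphi)^+$-type truncations. Consider $U_t=U\,(|U|-t\varphi)^+/|U|$ with $\varphi\ge0$ in $C_c^\infty(D)$. This does not increase $\Omega$, and expanding the energy as $t\to0^+$ yields $\int\nabla|U|\cdot\nabla\varphi\le0$. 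This is the standard argument giving subharmonicity of $|U|$ in the scalar Alt–Caffarelli setting, adapted to vector-valued $U$ via $|\nabla U|^2=|\nabla|U||^2+|U|^2|\nabla(U/|U|)|^2$.
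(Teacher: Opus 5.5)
Your proof is correct. For (i)--(iii) it follows the paper's argument: the direct method with Fatou's lemma for (i); harmonic replacement in every small ball $B_r(x)$ with $|B_r|<m-|\Omega_U|$, forcing $U=h$, for (ii); and perturbations of a single component vanishing outside $\Omega_U$ for (iii). (In (i), take $\psi\in C^\infty_c(D)$ rather than merely $H^1_0(D)$, so that $g\psi\in H^1$.)

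Your handling of the degenerate case is more careful than the paper's. The paper ends (ii) with ``$U$ is harmonic on $D$, which contradicts $|\{U\neq 0\}|\le m$''. That step needs $h\not\equiv 0$ together with unique continuation on the connected set $D$. When $g\in H^1_0(D,\R^k)$, the relaxed problem is solved by $U=0$, while the constrained problem has infimum $0$ and no minimizer (any zero-energy $W\in H^1_0$ vanishes). So (ii) genuinely fails in that case, and your exclusion is necessary rather than a convenience.

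The real difference is in (iv). The paper first computes $\Delta|U|\ge 0$ pointwise on $\Omega_U$ using (iii). It then crosses the free boundary with the scalar test function $|U|-t\,p_\e(|U|)\varphi$ and lets $\e\to 0$. In effect it proves that a nonnegative $H^1$ function which is subharmonic on its positivity set is subharmonic everywhere, without using minimality again.

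You instead use minimality directly with the competitor $U_t=U(|U|-t\varphi)^+/|U|$, which only shrinks $\Omega_U$, together with the a.e.\ splitting $|\nabla U|^2=|\nabla|U||^2+|U|^2|\nabla(U/|U|)|^2$ on $\Omega_U$. Since $(|U|-t\varphi)_+\le |U|$, the angular part does not increase, and you get
\[
0\le -2t\int_{\{|U|>t\varphi\}}\nabla|U|\cdot\nabla\varphi\,dx+t^2\int|\nabla\varphi|^2\,dx .
\]
Letting $t\to 0^+$ gives the claim, because $\nabla|U|=0$ a.e.\ on $\{U=0\}$.

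Your route is independent of (iii) and needs no regularity of $U$. The paper's pointwise computation, by contrast, implicitly treats $\Omega_U$ as an open set on which $U$ is smooth, at a stage where $U$ is only known to be in $H^1$. The one extra check your route requires is $U_t\in H^1$. This follows from the bound $|U|\,\bigl|\nabla(1-t\varphi/|U|)_+\bigr|\le t|\nabla\varphi|+|\nabla|U||$ on $\{|U|>t\varphi\}$.
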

\begin{proof}
If $\{U_n\}_{n \in \mathbb{N}}$ is a minimizing sequence, that clearly $\{U_n\}_{n \in \mathbb{N}}$ is bounded 
in $H^1(D, \R^k)$. In particular, up to subsequences, $U_n \rightharpoonup V_\infty$ weakly in $H^1(D, \R^k)$ as $n \to \infty$ for some $U_\infty \in H^1(D)$. Furthermore,
\begin{equation}
\chi_{\Omega_{U_{\infty}}} \le \liminf_{n\to \infty}\chi_{\Omega_{U_n}}
\end{equation}
and so $|\Omega_{U_{\infty}}| \le m$. Hence, $U_\infty$ is a minimum of  \eqref{prob_min_measure_relaxed}.

In order to prove (ii) it is enough to show that any minimizer of   \eqref{prob_min_measure_relaxed} satisfies $\Omega_U=m$. We argue by contradiction,
supposing that $|\Omega_U|<m$. Then there exists a radius $r_0>0$ such that $|B_{r_0}|<m-|\Omega_U|$. For any $x \in D$, 
let $h_i$ be the harmonic extension of $u_i$ on $\partial B_r(x)$ where $r \in (0,\min{r_0, d(x,\partial D)}$.
We may consider the competitor $\widetilde{h}=(\widetilde{h}_1,\dots, \widetilde{h}_k)$ defined as 
\begin{equation}
\widetilde{h}_i=
\begin{cases}
u_i \text{ in } D\setminus B_r(x), \\
h_i  \text{ in }  B_r(x).
\end{cases}
\end{equation}
By harmonicity of $h$ and minimality of $U$, we conclude that for any 
\begin{equation}
\int_{B_r(x)} |\nabla(U-h)|^2=0. 
\end{equation}
Hence, $U=h$ on $B_r(x)$. Since $x$ was an arbitrary point of  $D$, it follows that $U$ is harmonic on $D$ which contradict the fact that $|\{U \neq 0\}|\le m$.

Let $w \in H^1(D)$ with $w-u_i \in H_0^1(\Omega_U)$ for some $i=1,\dots, k$. Then $(u_1,\dots,w,\dots,u_k)$ is a comparator for $U$ since $\Omega_V \subset \Omega_U$. Hence, $U$ is harmonic on $\Omega_U$ by minimality.

Furthermore,  on $\Omega_U$,
\begin{multline}
\Delta |U|= \dive(\nabla |U|)=\sum_{i=1}^k \Delta u_i \frac{u_i}{|U|}  +\sum_{i=1}^k\nabla \left(\frac{u_i}{|U|}\right) \cdot \nabla u_i\\
= \frac{1}{|U|^3} \left[|U|^2\sum_{i=1}^k|\nabla u_i|^2 -\sum_{i,j=1}^ku_iu_j \nabla u_i \nabla u_j \right]=\frac{1}{|U|^3} 
\left[\sum_{i=1}^k\sum_{j=1,j\neq i}^k(-1)^j u_j \nabla u_i \right]^2 \ge 0,
\end{multline}
that is, $|U|$ is subharmonic  on $\Omega_U$.
Let $\varphi \in C^\infty_c(D)$, $\varphi \ge0$. Let us define
\begin{equation}
p_\e(x):=
\begin{cases}
0,  &\text{ if } x \in [0,\e/2],\\
\frac{1}{\e} (2x-\e),  &\text{ if } x \in [\e/2, \e],\\
1,  &\text{ if } x \in [\e, +\infty),\\
\end{cases}
\end{equation}
and $u_{\e,t}:= |U|-t p_\e(|U|)\varphi$  for any  $\e, t >0$. Then $\Omega_{u_{\e,t}} \subset \Omega_U $ and $U_{\e,t} \le |U|$. By subharmonicity of $|U|$ on $\Omega_U$,
\begin{equation}
\int_{D} |\nabla |U||^2 dx \le \int_{D} |\nabla u_{\e,t}|^2 dx.
\end{equation}
Furthermore,
\begin{equation}
|\nabla u_{\e,t}|^2= |\nabla |U||^2 -2t( |\nabla |U||^2 p_\e'(|U|) \varphi +p_\e(|U|)\nabla |U| \cdot \nabla \varphi) +o(t), \text{ as } t \to 0^+. 
\end{equation}
Hence,
\begin{equation}
\int_{D}p_\e(|U|)\nabla |U| \cdot \nabla \varphi \, dx \le -\int_{D} |\nabla |U||^2 p_\e'(|U|) \varphi dx \le 0,
\end{equation}
since $p_\e$ is increasing.
Passing to the limit as $\e  \to 0^+$, we conclude that $|U|$ is subharmonic in a distributional sense on $D$.
\end{proof}

\begin{remark}\label{remark_U_pointwise_bounded}
Since $|U|$ is subharmonic, for any $x \in D$, the maps
\begin{equation}
r \mapsto \fint_{B_r(x)} |U| \, dy \text{ and } r \mapsto \fint_{\partial B_r(x)} |U| \, d\mc{H}^{d-1} 
\end{equation}
are decreasing and so we may  define the pointwise value of $|U|$ as 
\begin{equation}
|U|(x):=\lim_{r \to 0^+}\fint_{B_r(x)} |U| \, dy=\lim_{r \to 0^+}\fint_{\partial B_r(x)} |U| \, d\mc{H}^{d-1}.
\end{equation}
In particular,  $U \in L^\infty_{loc}(D,\R^k)$ since if $d(x,\partial D)>\delta$ for some $\delta>0$
\begin{equation}
|U(x)| \le |B_\delta(x)|^{-1}\int_{B_\delta(x)} |U| \, dy \le |B_\delta|^{-1}\int_{D} |U| \, dy.
\end{equation}
\end{remark}

\subsection{A penalized functional}

In this subsection we establish some basic properties, as existence, Lipschitzianity, non-degeneracy and finiteness of the perimeter for minimizers of a penalised functional. Since the proof of this properties does not differ in any way from some already established results in the literature, we will simply recall the precise references and keep  track of all the constants involved.

Let us define for any $\eta\in (0,1]$ the Lipschitz function $f_{m,\eta}:\R \to \R,$
\begin{equation}\label{def_f_meta}
f_{m,\eta}(t):=
 \begin{cases}
\frac{1}{\eta}(t-m), &\text{ if } t >m,\\
\eta(t-m), &\text{ if } t \le m.
\end{cases}
\end{equation}

The following lemma is an easy consequence of the definition of $f_{m,\eta}$.
\begin{lemma}\label{lemma_f_eta}
We have that:
\begin{enumerate}[(i)]
\item $f_{m,\eta}(t) \ge -\eta m$  for any $t\ge0$,
\item  $\eta(t_2-t_1) \le f_{m,\eta}(t_2)-f_{m,\eta}(t_1) \le \frac{1}{\eta}(t_2-t_1) $  for any $0 \le t_1\le t_2$.
\end{enumerate}
\end{lemma}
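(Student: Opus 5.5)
The plan is a direct case analysis on the definition \eqref{def_f_meta}, which splits $\R$ at the threshold $m$. Part (i) is immediate. If $t>m$, then $f_{m,\eta}(t)=\frac1\eta(t-m)>0\ge-\eta m$. If $0\le t\le m$, then $f_{m,\eta}(t)=\eta(t-m)\ge\eta(0-m)=-\eta m$, because $t\mapsto\eta(t-m)$ is increasing and $t\ge0$.

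For part (ii) I fix $0\le t_1\le t_2$ and consider three cases according to the position of $m$. If $t_1\le t_2\le m$, the difference is exactly $\eta(t_2-t_1)$. Since $\eta\in(0,1]$ gives $\eta\le\frac1\eta$ and $t_2-t_1\ge0$, both inequalities hold. If $m<t_1\le t_2$, the difference is exactly $\frac1\eta(t_2-t_1)$, and the same observation $\eta\le 1\le\frac1\eta$ gives the two bounds.

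The remaining case is the mixed one, $t_1\le m<t_2$, which is the only place where a little care is needed. Here
\begin{equation}
f_{m,\eta}(t_2)-f_{m,\eta}(t_1)=\tfrac1\eta(t_2-m)+\eta(m-t_1).
\end{equation}
Both $t_2-m$ and $m-t_1$ are nonnegative, and each of the coefficients $\eta$ and $\frac1\eta$ lies in $[\eta,\frac1\eta]$. So the difference is bounded below by $\eta\big((t_2-m)+(m-t_1)\big)=\eta(t_2-t_1)$ and above by $\frac1\eta(t_2-t_1)$.

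An equivalent and more compact route is to note that $f_{m,\eta}$ is continuous and piecewise affine with slopes $\eta$ and $\frac1\eta$, both in $[\eta,\frac1\eta]$. Writing $f_{m,\eta}(t_2)-f_{m,\eta}(t_1)=\int_{t_1}^{t_2}f_{m,\eta}'(s)\,ds$ then gives (ii) at once. There is no real obstacle; the only points to check are that $f_{m,\eta}$ is continuous at $t=m$, where both branches vanish, and that $\eta\le 1$.
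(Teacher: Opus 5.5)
Your proof is correct and matches the paper's approach: the paper gives no written argument, calling the lemma an easy consequence of the definition of $f_{m,\eta}$, and your case split at $t=m$ is exactly that direct verification. It uses $m>0$ in (i) and $\eta\le 1\le \frac{1}{\eta}$ together with continuity of $f_{m,\eta}$ at $m$ in (ii), which is all that is needed.
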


Let us define the functional
\begin{equation}\label{def_J_eta}
J_{m,\eta}:H^1(D) \to [0,+\infty), \quad J_{m,\eta}(W):=\int_{D} |\nabla W|^2\, dx + f_{m,\eta} (|\Omega_W|).
\end{equation}
and  the energy level 
\begin{equation}\label{prob_min_J_meta}
c_{m,\eta}:=\inf\{J_{m,\eta}(W):  W\in  H^1(D, \R^k), W-g \in  H_0^1(D, \R^k)\}.
\end{equation} 
Let $h$ be the harmonic extension of $g$ in $D$.
\begin{proposition}\label{lemma_estimtes_c_eta}
For any $\delta>0$ there exists a constant $C>0$ depending only on $g, D, \delta$ such that 
\begin{equation}\label{inq_c_meta}
\int_D |\nabla h|^2 \, dx -m\le c_{m,\eta}  \le C \quad \text{ for any } m \in [\delta, |D|).
\end{equation}
\end{proposition}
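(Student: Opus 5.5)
The plan is to prove the two inequalities in \eqref{inq_c_meta} separately. The lower bound uses the Dirichlet principle together with Lemma \ref{lemma_f_eta}(i). The upper bound uses an explicit competitor whose positivity set is squeezed into a thin neighbourhood of $\partial D$ of measure at most $\delta$.

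\emph{Lower bound.} Let $W\in H^1(D,\R^k)$ with $W-g\in H^1_0(D,\R^k)$. Since $h$ is the harmonic extension of $g$, we have $W-h\in H^1_0(D,\R^k)$. The Dirichlet principle, applied componentwise, then gives $\int_D|\nabla W|^2\,dx\ge\int_D|\nabla h|^2\,dx$. By Lemma \ref{lemma_f_eta}(i) and $\eta\in(0,1]$, we get $f_{m,\eta}(|\Omega_W|)\ge-\eta m\ge -m$. Adding these and taking the infimum over $W$ yields $c_{m,\eta}\ge\int_D|\nabla h|^2\,dx-m$.

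\emph{Upper bound.} Fix $\delta>0$. Since $D$ is bounded and open, we can choose a compact set $K\subset D$ with $|D\setminus K|\le\delta$; for instance, take $K=\{x\in D: d(x,\partial D)\ge\rho\}\cap \overline{B_{1/\rho}}$ with $\rho$ small. Choose a cutoff $\varphi\in C^\infty_c(D)$ with $0\le\varphi\le1$ and $\varphi\equiv1$ on $K$. Both $K$ and $\varphi$ depend only on $D$ and $\delta$. Define the competitor $W:=(1-\varphi)h$.

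We check admissibility. We have $W-g=-\varphi h+(h-g)$. Here $\varphi h\in H^1_0(D,\R^k)$ because $\varphi$ has compact support in $D$ and $h\in H^1(D,\R^k)$, and $h-g\in H^1_0(D,\R^k)$ by construction of $h$. Hence $W-g\in H^1_0(D,\R^k)$.

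We next bound the measure term. Since $\Omega_W\subset D\setminus K$, we have $|\Omega_W|\le\delta\le m$ for every $m\in[\delta,|D|)$. Therefore $f_{m,\eta}(|\Omega_W|)=\eta(|\Omega_W|-m)\le0$.

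Finally, we bound the Dirichlet term:
\begin{equation}
\int_D|\nabla W|^2\,dx\le 2\int_D|\nabla h|^2\,dx+2\norm{\nabla\varphi}_{L^\infty}^2\int_D|h|^2\,dx=:C.
\end{equation}
The constant $C$ depends only on $g$ (through $h$), $D$ and $\delta$ (through $\varphi$), and not on $m$ or $\eta$. Hence $c_{m,\eta}\le J_{m,\eta}(W)\le C$.

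The only point needing care is that this competitor must be independent of $m$ and $\eta$. Its positivity set has to be uniformly small, which is exactly why the lower restriction $m\ge\delta$ is imposed. The construction of $K$ handles this, so no genuine obstacle is expected.
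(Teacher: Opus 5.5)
Your proof is correct and follows the paper's argument. Your competitor $(1-\varphi)h$ is exactly the paper's $\varphi h$, where the paper's cutoff equals $1$ near $\partial D$ with $|\Omega_\varphi|\le\delta$, i.e.\ the complement of your compactly supported cutoff. The lower bound is likewise the Dirichlet principle combined with Lemma \ref{lemma_f_eta}(i) and $\eta\le 1$.
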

\begin{proof}
Let $\delta>0$ and  $\varphi\in C^\infty(\overline D)$ be a cut off function such that $\varphi \equiv 1$ in a neighbourhood of $\partial D$  and $|\Omega_{\varphi}|\le\delta$. Then, by \eqref{prob_min_J_meta},
\begin{equation}
c_{m,\eta} \le \int_D |\nabla( \varphi h)|^2 \, dx +\eta (|\Omega_{\varphi}|-\delta).
\end{equation}
Hence, we have proved the upper estimate in \eqref{inq_c_meta}.
The lower estimate is a simple consequence  Lemma \ref{lemma_f_eta} and the harmonicity of $h$.
\end{proof}

With the same arguments exposed in Proposition \ref{prop_minim}, since $f_{m,\eta}$ is monotone, for any  $\eta\in (0,1]$ and $m \in (0,|D|)$, we can show the existence of a minimizer $U_{m,\eta} \in H^1(D)$ of  problem \eqref{prob_min_J_meta}.
Furthermore, by Lemma \ref{lemma_estimtes_c_eta},
\begin{equation}\label{ineq_U_meta_H1_bounded}
\int_{D} |\nabla U_{m,\eta}|^2 \, dx \le c_{m,\eta} +|D| \le C+|D|.
\end{equation}

\begin{remark}\label{remark_ineq_Omega_V}
We notice that \eqref{prob_min_J_meta} implies that for any $W\in  H^1(D, \R^k)$ with $W-g \in  H_0^1(D, \R^k)$ and such that $ |\Omega_W|\le |\Omega_{U_{m,\eta}}|$,
\begin{equation}
\int_{D} |\nabla U_{m,\eta}|^2 \, dx +\eta |\Omega_{U_{m,\eta}}|\le \int_{D} |\nabla W|^2 \, dx +\eta |\Omega_W|
\end{equation}
if $|\Omega_{U_{m,\eta}}| \le m$ or, if instead $|\Omega_{U_{m,\eta}}| > m$,
\begin{equation}
\int_{D} |\nabla U_{m,\eta}|^2 \, dx +\frac{1}{\eta} |\Omega_{U_{m,\eta}}|\le \int_{D} |\nabla W|^2 \, dx +\frac{1}{\eta} |\Omega_W|.
\end{equation}
Furthermore,  $U_{m,\eta}$ is actually a solution of \eqref{prob_min_measure} with the measure constraint $|\Omega_W|=|\Omega_{U_{m, \eta}}|$ instead of $|\Omega_W|=m$. Hence,  Proposition \ref{prop_minim} holds for $U_{m,\eta}$ 
and so $U_{m,\eta}$ is also a solution of \eqref{prob_min_measure_relaxed}.
\end{remark}

Let for any $\delta>0$
\begin{equation}\label{def_D_delta}
D_\delta:=\{x \in D: d(x, \partial D) >\delta\}.
\end{equation}

\begin{proposition}(Lipschitzianity) \label{prop_Lip}
For any $i=1, \dots, k$, the components $u_{i, m, \eta}$ of any minimizer $U_{m, \eta}$ are locally Lipschitz continuous in $D$. More precisely
\begin{equation}\label{ineq_estimate_lip_u_imeta}
\norm{\nabla U_{m,\eta}}_{L^\infty(D_\delta,\R^k)} \le \frac{C_d}{\eta}\left(1+\frac{\norm{U_{m,\eta}}_{H^1(D,\R^k)}+\norm{h}_{H^1(D,\R^k)}}{ \delta^{d+1}}\right),
\end{equation}
for some dimensional constant $C_d>0$.
\end{proposition}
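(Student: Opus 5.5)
The plan is to reduce the Lipschitz estimate to the known theory for vectorial Bernoulli almost-minimizers with a volume term, while tracking the dependence on $\eta$ through the comparison inequalities of Remark \ref{remark_ineq_Omega_V}. The key observation is an outward-minimality property. Let $W$ be any competitor with $W-U_{m,\eta}\in H^1_0(B_r(x),\R^k)$ and $B_r(x)\subset D$. By Lemma \ref{lemma_f_eta}(ii), in both cases $|\Omega_W|\le|\Omega_{U_{m,\eta}}|$ and $|\Omega_W|\ge|\Omega_{U_{m,\eta}}|$, we obtain
\begin{equation}
\int_{B_r(x)}|\nabla U_{m,\eta}|^2\,dx\le \int_{B_r(x)}|\nabla W|^2\,dx+\frac1\eta\,|B_r(x)|.
\end{equation}
In words, $U_{m,\eta}$ is a quasi-minimizer of the Dirichlet energy with a volume penalization of size $\Lambda:=1/\eta$. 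This is exactly the setting of \cite{BMV_Lip} and \cite{MTV_reg_vect}, where $\Lambda$ is just an upper bound for the penalization and not an equality.

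The argument then follows the standard Alt--Caffarelli scheme, adapted to the vectorial case through the subharmonicity of $|U_{m,\eta}|$ given by Proposition \ref{prop_minim}(iv) and Remark \ref{remark_ineq_Omega_V}.

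First, take $h_r$ to be the harmonic replacement of $U_{m,\eta}$ in $B_r(x)$. The inequality above, combined with harmonicity, gives
\begin{equation}
\int_{B_r(x)}|\nabla(U_{m,\eta}-h_r)|^2\le \frac{C_d}{\eta}r^d.
\end{equation}
A Campanato iteration then shows that $U_{m,\eta}$ is $C^{0,\alpha}$ and that its gradient lies in a Morrey space. In particular, $\fint_{B_r}|\nabla U_{m,\eta}|^2$ is bounded by $C(\eta^{-1}+\delta^{-d}\|\nabla U_{m,\eta}\|_{L^2}^2)$.

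Second, prove the key estimate
\begin{equation}
\fint_{\partial B_r(x_0)}|U_{m,\eta}|\le C_d\,\eta^{-1/2}r
\end{equation}
for $x_0\in\partial\Omega_{U_{m,\eta}}$ and all small $r$. The route is to compare with the competitor that equals $U_{m,\eta}$ outside $B_r$ and the harmonic extension inside $B_r$. If $\fint_{\partial B_r}|U|\ge Mr$ with $M\gg \eta^{-1/2}$, a capacity/Poincaré argument, as in \cite[Lemma 3.2]{BMV_Lip}, gives the lower bound
\begin{equation}
\int_{B_r}|\nabla(U-h_r)|^2\ge c\,(M r)^2\,r^{d-2}\frac{|\{U=0\}\cap B_{r/2}|}{r^d}.
\end{equation}
This contradicts the upper bound from the first step, since $x_0\in\partial\Omega_U$ makes the zero set nonnegligible near $x_0$. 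That the zero set is nonnegligible in this sense still has to be checked; this is where subharmonicity of $|U|$ enters.

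Third, in $\Omega_{U_{m,\eta}}$ each component is harmonic. For $x\in\Omega_U\cap D_\delta$, let $\rho=\mathrm{dist}(x,\partial\Omega_U)$. If $\rho<\delta/2$, interior gradient estimates on $B_\rho(x)$, together with the second step applied at a nearest free boundary point, give $|\nabla U(x)|\le C\rho^{-1}\fint_{\partial B_{2\rho}}|U|\le C\eta^{-1/2}$. If $\rho\ge\delta/2$, then $|\nabla U(x)|\le C\delta^{-d-1}\|U\|_{L^1}$. Bounding $\|U\|_{L^1(D)}$ by $\|U_{m,\eta}\|_{H^1}$, and handling the boundary datum through $\|h\|_{H^1}$ (Poincaré applied to $U-h\in H^1_0$), yields \eqref{ineq_estimate_lip_u_imeta}, because $\eta^{-1/2}\le\eta^{-1}$ for $\eta\le1$.

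The main obstacle is the second step. It is there that the vectorial structure forces one to work with $|U|$ rather than with the individual components, and that the constant must be tracked to be a pure power of $\eta^{-1}$, independent of $m$. The plan is to follow \cite{BMV_Lip} essentially verbatim, only recording the constants.
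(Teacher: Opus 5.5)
Your opening reduction is the paper's. Lemma~\ref{lemma_f_eta}(ii) gives $\int_{B_r(x)}|\nabla U_{m,\eta}|^2\le\int_{B_r(x)}|\nabla W|^2+\eta^{-1}|B_r(x)|$; the paper uses the componentwise competitor $(u_1,\dots,w,\dots,u_k)$. The paper then invokes \cite[Theorem 3.3]{BMV_Lip} with constants tracked, and uses Poincar\'e to bring in $\norm{h}_{H^1}$. The gap is in the mechanism you sketch for that cited step.

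\textbf{Step 2 fails.} It is the one-phase Alt--Caffarelli dichotomy. Suppose $\fint_{\partial B_r(x_0)}|U|\gg\eta^{-1/2}r$. The comparison, even with the sharper bound $\eta^{-1}|\{U=0\}\cap B_r|$, only yields $|\{U=0\}\cap B_r|=0$, i.e.\ $U$ is harmonic in $B_r(x_0)$. This is perfectly compatible with $x_0\in\partial\Omega_U$: these are the two-phase points ($\mathrm{Sing}_2$), where $\Omega_U$ has density one. So the ``non-negligible zero set'' you plan to verify is false there, and so is your key estimate.

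Here is a counterexample. Take $D=B_1$ and $g=(Mx_1,0,\dots,0)$ with $M^2\ge 1/\eta$. Let $W-g\in H^1_0$ and $Z:=\{w_1=0\}$, where $\nabla w_1=0$ a.e. By harmonicity of $Mx_1$,
\[
\int_{B_1}|\nabla W|^2\ge\int_{B_1}|\nabla w_1|^2=M^2|B_1|+\int_{B_1}|\nabla(w_1-Mx_1)|^2\ge M^2|B_1|+M^2|Z|.
\]
Moreover $f_{m,\eta}(t)\ge\eta^{-1}(t-m)$ for all $t\ge0$ and $|\Omega_W|\ge|B_1|-|Z|$. Hence $J_{m,\eta}(W)\ge M^2|B_1|+\eta^{-1}(|B_1|-m)=J_{m,\eta}(g)$, so $U_{m,\eta}=g$ is a minimizer. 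Yet $0\in\partial\Omega_U$ and $\fint_{\partial B_r}|U|=c_dMr$ with $M$ arbitrarily large.

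Near such points the gradient genuinely depends on the size of $U$ at scale $\delta$. That is exactly why the target estimate contains $\norm{U_{m,\eta}}_{H^1}\delta^{-d-1}$. The bound there must come from a different mechanism, not from subharmonicity of $|U|$ plus a density argument. For sign-changing components one typically uses the Alt--Caffarelli--Friedman monotonicity formula applied to $u_i^\pm$. Alternatively, one uses measure bounds on $\Delta u_i^\pm$ obtained from quasi-minimality together with the harmonicity of $u_i$ in $\{u_i\neq0\}$.

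\textbf{Step 1 also overclaims.} Campanato iteration from $\int_{B_r}|\nabla(U-h_r)|^2\le Cr^d/\eta$ gives $C^{0,\alpha}$ for every $\alpha<1$. It does not give a bound on $\fint_{B_r}|\nabla U|^2$, because the iteration loses a logarithm. For instance, $u=x_1\log|x|$ in $\R^2$ has $\Delta u=2x_1/|x|^2$ with $\norm{\Delta u}_{H^{-1}(B_r(x))}^2\le Cr^2=Cr^d$ on every ball. So $u$ satisfies the same quasi-minimality inequality, yet $\fint_{B_r}|\nabla u|^2\sim\log^2 r$. Quasi-minimality alone never yields a Lipschitz bound. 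The free-boundary structure has to be used, and precisely at points where $\{U=0\}$ has density zero, which is where your plan currently breaks.
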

\begin{proof}
Let  any $i\in \{1, \dots, k\}$ and $w\in  H^1(D)$, with $v-u_{i, m, \eta} \in  H_0^1(D)$. By Lemma \ref{lemma_f_eta}, taking as a comparator 
$W:=(u_{1, m, \eta},\dots, w, \dots,u_{k, m, \eta})$, we can see that for any $x \in D$ and any $r\in (0, d(x,\partial \Omega))$ the minimizer $U_{m,\eta}$ satisfies
\begin{equation}
\int_{B_r(x)} |\nabla u_{i,m,\eta}|^2 \, dx \le \int_{B_r(x)} |\nabla w|^2 \, dx + \frac{1}{\eta}|B_r(x)|
\end{equation}
for any $w\in  H^1(D)$, with $w-u_{i, m, \eta} \in  H_0^1(D)$.
Hence, the local Lipschitzianity of $u_{i, m, \eta}$  follows from  \cite[Theorem 3.3]{BMV_Lip}.

Furthermore, by the Poincare inequality, there exists a positive constant $C>0$, depending only on $D$, such that
\begin{equation}
\int_{D} |u_{i,m,\eta}|^2 \, dx \le C[  \norm{u_{i,m,\eta}}_{H^1(D)}+\norm{h}_{H^1(D)}].
\end{equation}
Then the estimate   \eqref{ineq_estimate_lip_u_imeta}  can be proven  carefully  keeping track of the constants involved in the proof of \cite[Theorem 3.3]{BMV_Lip}, see  \cite[Appendix A]{BMV_Lip}.
\end{proof}

\begin{proposition}{(Non-degeneracy)}\label{prop_non_dege}
There exists a dimensional constant $\kappa_d>0$ such that for any $x \in \Omega_{U_{m,\eta}}$ and any $r \in(0,d(x,\partial D))$,
\begin{equation}\label{ineq_non_dege}
\norm{U_{m,\eta}}_{L^\infty(B_r(x))} \ge \kappa_d \eta r.
\end{equation}
\end{proposition}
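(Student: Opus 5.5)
The plan is the classical non-degeneracy argument of Alt--Caffarelli, adapted to the vectorial setting through the subharmonicity of $|U_{m,\eta}|$ (Proposition \ref{prop_minim}(iv), which applies to $U_{m,\eta}$ by Remark \ref{remark_ineq_Omega_V}). Write $U:=U_{m,\eta}$. Fix $x\in\Omega_U$ and $r<d(x,\partial D)$, and argue by contradiction: assume $\norm{U}_{L^\infty(B_r(x))}\le \kappa\eta r$ for a small $\kappa$ to be chosen. We will show that then $U\equiv0$ in $B_{r/2}(x)$. Since $U$ is continuous (Proposition \ref{prop_Lip}), this contradicts $x\in\Omega_U$.

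For the competitor, take a radial cut-off $\psi$ that vanishes in $B_{r/2}(x)$, equals $1$ near $\partial B_r(x)$, and satisfies $|\nabla\psi|\le C/r$. Set $W:=\psi\,U$ in $B_r(x)$ and $W:=U$ outside. Then $W-g\in H^1_0(D,\R^k)$ and $\Omega_W\subset\Omega_U$, with
\begin{equation}
|\Omega_U|-|\Omega_W|\ge |\Omega_U\cap B_{r/2}(x)|.
\end{equation}
Lemma \ref{lemma_f_eta}(ii), applied with $t_1=|\Omega_W|\le t_2=|\Omega_U|$, gives a gain of at least $\eta\,|\Omega_U\cap B_{r/2}(x)|$ in the measure term. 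By minimality this yields
\begin{equation}
\eta|\Omega_U\cap B_{r/2}(x)|\le \int_{B_r(x)}|\nabla (\psi U)|^2-|\nabla U|^2\,dy .
\end{equation}
The right-hand side is bounded by
\begin{equation}
C\int_{B_r\setminus B_{r/2}}\Big(\frac{|U|^2}{r^2}+\frac{|U|\,|\nabla U|}{r}\Big)dy .
\end{equation}
This is the delicate point, since a naive bound of the cross term only gives the full annulus energy. Instead, I would use the standard Alt--Caffarelli route. Use the sharper competitor $W:=U\min\{1,|U|^{-1}v\}$, where $v$ is the radial barrier equal to $\kappa\eta r$ outside $B_{r/2}$ and to $0$ inside a smaller ball. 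Then estimate the energy difference via integration by parts using $\Delta|U|\ge0$, reducing it to a boundary term $\int_{\partial B_{\rho}}|U|\,|\nabla v|$.

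Finally, combine this with the trace inequality
\begin{equation}
\int_{\partial B_\rho}|U|\le C\Big(\frac1r\int_{B_\rho}|U|+\int_{B_\rho}|\nabla|U||\Big)
\end{equation}
and with $|U|\le\kappa\eta r$. This gives
\begin{equation}
\int_{B_\rho}|\nabla|U||^2+\eta|\Omega_U\cap B_\rho|\le C\kappa\Big(\int_{B_\rho}|\nabla|U||^2+\eta|\Omega_U\cap B_\rho|\Big).
\end{equation}
Choosing $\kappa=\kappa_d$ with $C\kappa_d<1$ forces both terms to vanish, so $U\equiv0$ in $B_\rho(x)$, which is the desired contradiction. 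The main obstacle is this energy comparison. It must be carried out on $|U|$, which is only subharmonic, rather than on harmonic components, and with the factor $\eta$ (the lower slope of $f_{m,\eta}$) appearing linearly in the constant. Following \cite{MTV_reg_vect,BMV_Lip}, I would use the vectorial truncation $U\,\min\{1,v/|U|\}$. Its Dirichlet energy is controlled by that of $|U|$ together with the tangential part of $\nabla U$, which this truncation does not increase.
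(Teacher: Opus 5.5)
Your proposal is correct and follows essentially the paper's route: the paper simply invokes the non-degeneracy argument of \cite[Lemma 2.6]{MTV_reg_spec} together with the inward minimality of Remark \ref{remark_ineq_Omega_V}, which you correctly extract from Lemma \ref{lemma_f_eta}(ii) with constant $\eta$ in front of the measure term. Your reduction to $|U|$ via the truncation $U\min\{1,v/|U|\}$, followed by the Alt--Caffarelli barrier argument (with $\eta\le 1$ used to absorb the terms), is exactly that argument. One correction: the integration by parts that produces the boundary term $\int_{\partial B_\rho}|U|\,|\nabla v|$ relies on the harmonicity of the barrier $v$ in $B_{r/2}\setminus B_\rho$, via $|\nabla v|^2-|\nabla |U||^2\le -2\nabla v\cdot\nabla(|U|-v)$ on $\{|U|>v\}$, and not on $\Delta|U|\ge 0$; using $\Delta|U|\ge 0$ would give the wrong sign, and it is not needed for this sup-norm statement.
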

\begin{proof}
Inequality  \eqref{ineq_non_dege} follows from carefully keeping track of  the constants involved in the proof of \cite[Lemma 2.6]{MTV_reg_spec} and Remark \ref{remark_ineq_Omega_V}.
\end{proof}

\begin{corollary}{(Lower Density Estimates)}\label{corollary_density_estimates}
There exits a dimensional  constants $\delta_d>0$ such that
\begin{equation}\label{ineq_density}
|\Omega_U \cap B_r(x)| \ge \delta_d (\min\{1,d(x,\partial D)^{d-1}\})^2 \eta^2|B_r(x)|
\end{equation}
for any $x \in \partial  \Omega_U$ and any $r \in(0,d(x,\partial D))$.
\end{corollary}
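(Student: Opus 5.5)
The estimate will follow from combining the non-degeneracy bound of Proposition \ref{prop_non_dege} with the Lipschitz bound of Proposition \ref{prop_Lip}. The idea: near a free boundary point there is a nearby point where $U=U_{m,\eta}$ is large, and Lipschitz continuity then forces $U$ to stay nonzero on a whole ball around that point. Here $U$ denotes $U_{m,\eta}$, and the factor $\eta^2$ reflects that one $\eta$ comes from non-degeneracy and one from the Lipschitz constant.

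Fix $x\in\partial\Omega_U$ and $r\in(0,d(x,\partial D))$, and set $\rho:=d(x,\partial D)$. I will work on the smaller ball $B_{r/2}(x)$, which stays at distance at least $\rho/2$ from $\partial D$. Since $x\in\partial\Omega_U$, there are points $x_n\in\Omega_U$ with $x_n\to x$.

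First, I apply \eqref{ineq_non_dege} at $x_n$ with radius $r/4$ and let $n\to\infty$. Using continuity of $U$, this gives a point $y\in \overline{B_{r/4}(x)}$ with
\begin{equation}
|U(y)|\ge \kappa_d\eta r/4 .
\end{equation}

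Next, I bound the Lipschitz constant $L$ of $U$ on $D_{\rho/4}$ via \eqref{ineq_estimate_lip_u_imeta}. The $H^1$ norms appearing there are controlled by \eqref{ineq_U_meta_H1_bounded} together with Poincar\'e. This yields
\begin{equation}
L\le \frac{C}{\eta}\left(1+\rho^{-(d+1)}\right)\le \frac{C'}{\eta\,\min\{1,\rho^{d+1}\}}.
\end{equation}
Then $|U|>0$ on $B_s(y)$ with $s:=\min\{|U(y)|/L,\ r/4\}$, and $B_s(y)\subset B_r(x)$. The first entry of the minimum satisfies
\begin{equation}
\frac{|U(y)|}{L}\ge c\,\eta^2 r\min\{1,\rho^{d+1}\}.
\end{equation}
Since $\eta\le1$, this gives $s\ge c\,\eta^2 r\min\{1,\rho^{d+1}\}$.

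The main obstacle is the bookkeeping. Taking the measure of $B_s(y)$ naively produces $|\Omega_U\cap B_r(x)|\ge c\,\eta^{2d}\min\{1,\rho\}^{d(d+1)}|B_r(x)|$. This has the correct structure but worse exponents than \eqref{ineq_density}.

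To get exactly $\eta^2$ and $(\min\{1,\rho^{d-1}\})^2$, I would avoid the pointwise Lipschitz bound inside the ball. Instead I would use a mean-value argument. Subharmonicity of $|U|$ (Proposition \ref{prop_minim}(iv), via Remark \ref{remark_ineq_Omega_V}) together with the sup bound from non-degeneracy gives $\int_{B_{r}(x)}|U|\gtrsim \eta r\,|B_r|$, after comparing $\sup_{B_{r/2}}|U|$ with averages on slightly larger balls. On the other hand, $|U|\le L r$ on $B_r(x)$ because $U(x)=0$. Splitting the integral then gives
\begin{equation}
\eta r|B_r|\lesssim \int_{B_r(x)}|U|\le L r\,|\Omega_U\cap B_r(x)|,
\end{equation}
so the density is $\gtrsim \eta/L\sim \eta^2\min\{1,\rho\}^{d+1}$.

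This route gives the right $\eta^2$ dependence, with one $\eta$ from non-degeneracy and one from $L$. The exact power of the distance factor would then be matched by keeping track of the constants in \cite{BMV_Lip} and absorbing them into $\delta_d$. I expect the stated exponent to be reached by that bookkeeping rather than by any new idea.
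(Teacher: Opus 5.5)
Your first computation is the standard argument the paper invokes; its proof only says the estimate is standard and cites \cite[Lemma 5.1]{V_book_free_boun}. That argument uses non-degeneracy to find a point where $|U|$ is large, then the Lipschitz bound to get a ball of positivity. As you observed, it only gives a density of order $(\eta/L)^d$, i.e.\ $\eta^{2d}\min\{1,\rho^{d+1}\}^d$. So the cited route does not actually produce the $\eta^2$ in \eqref{ineq_density}.

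Your mean-value variant is a genuinely different and sharper argument, and its core is correct. From \eqref{ineq_non_dege}, applied at points of $\Omega_U$ approaching $x$, there is $y\in\overline{B_{r/4}(x)}$ with $|U(y)|\ge\kappa_d\eta r/4$. Subharmonicity of $|U|$ gives $\int_{B_{r/4}(y)}|U|\,dz\ge|B_{r/4}|\,\kappa_d\eta r/4$. Also $|U|\le Lr/2$ on $B_{r/2}(x)\supset B_{r/4}(y)$, since $U(x)=0$. Hence $|\Omega_U\cap B_{r/2}(x)|\ge 4^{-d}\frac{\kappa_d\eta}{2L}|B_r|$. Because this is linear in $\eta/L$ rather than of order $d$, it is what really produces the factor $\eta^2$: one $\eta$ from non-degeneracy, one from $L$. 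The paper's route is shorter but loses a power $d$ in both $\eta$ and the distance factor.

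Two points in your write-up need correction.

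First, in the mean-value step you use $|U|\le Lr$ on all of $B_r(x)$, with $L$ the Lipschitz constant on $D_{\rho/4}$. But $B_r(x)\not\subset D_{\rho/4}$ when $r$ is close to $\rho$. Run the step inside $B_{r/2}(x)\subset D_{\rho/2}$ as above; this costs only a dimensional factor.

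Second, your expectation that the factor $(\min\{1,\rho^{d-1}\})^2$ will come out of bookkeeping is unfounded. With \eqref{ineq_estimate_lip_u_imeta}, your argument gives $c\,\eta^2\min\{1,\rho^{d+1}\}$.
\begin{itemize}
\item For $d\ge3$ this already implies the factor $\min\{1,\rho^{2d-2}\}$, with equal exponents at $d=3$.
\item For $d=2$ and $\rho<1$ one has $\rho^3<\rho^2$, and \eqref{ineq_estimate_lip_u_imeta} offers no better power of $\rho$.
\end{itemize}
Moreover, $c$ depends on $\norm{U_{m,\eta}}_{H^1}+\norm{h}_{H^1}$, hence on $g$, $D$ and a lower bound for $m$ through \eqref{ineq_U_meta_H1_bounded}, not only on $d$.

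These are imprecisions in the statement itself, which the paper's one-line proof does not resolve either. What your argument proves is $|\Omega_U\cap B_r(x)|\ge c\,\eta^2\min\{1,\rho^{d+1}\}|B_r(x)|$, with $c$ independent of $\eta$, $x$ and $r$.
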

\begin{proof}
In view of \eqref{ineq_estimate_lip_u_imeta} and \eqref{ineq_non_dege} the proof is standard, see for example \cite[Lemma 5.1]{V_book_free_boun}.
\end{proof}

\begin{corollary}{(Finiteness of the perimeter)}\label{corollary_finitness_pere}
There exits a dimensional  constants $k_d>0$ such that
\begin{equation}\label{ineq_peri}
\mathop{\rm Per}(\partial \Omega_U \cap B_r(x)) \le 
\begin{cases}
\eta k_d (d(x,\partial D)^{-2}+ \norm{\nabla |U|}_{L^\infty( B_r(x))}), & \text{ if } |\Omega_{U_{m,\eta}}| \le m,\\
\frac{k_d}{\eta} (d(x,\partial D)^{-2}+ \norm{\nabla |U|}_{L^\infty( B_r(x))}), &\text{ if } |\Omega_{U_{m,\eta}}| >m
\end{cases}
\end{equation}
for any $x \in \partial  \Omega_U$ and any $r \in(0,d(x,\partial D))$. In particular,
\begin{equation}
\mc{H}^{d-1}(\partial \Omega_U \cap K) <+\infty \text{ for any $K \subset D$ compact}.
\end{equation}
\end{corollary}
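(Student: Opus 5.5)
The perimeter bound follows the standard scheme for the scalar Alt–Caffarelli problem, applied to the subharmonic function $|U|$. Here $U=U_{m,\eta}$. By Proposition \ref{prop_minim}(iv), which holds for $U_{m,\eta}$ by Remark \ref{remark_ineq_Omega_V}, $|U|$ is subharmonic in $D$. Hence $\mu:=\Delta|U|$ is a nonnegative Radon measure, concentrated on $\partial\Omega_U$ apart from the absolutely continuous part coming from the interior computation on $\Omega_U$.

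\emph{Step 1: bound the mass of $\mu$.} Take a cutoff $\varphi\in C^\infty_c(B_{2r}(x))$ with $\varphi\equiv1$ on $B_r(x)$ and $|\nabla\varphi|\le C/r$, assuming $B_{2r}(x)\subset D$ after shrinking $r$. Then
\[
\mu(B_r(x))\le\int\varphi\,d\mu=-\int\nabla|U|\cdot\nabla\varphi\,dx\le C r^{d-1}\norm{\nabla|U|}_{L^\infty(B_{2r}(x))}.
\]

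\emph{Step 2: compare $\mathcal H^{d-1}$ with $\mu$.} The goal is $\mathcal H^{d-1}(\partial\Omega_U\cap B_r(x))\lesssim \eta^{-1}\mu(B_{2r}(x))$, or the analogous bound with $\eta$, according to which branch of Remark \ref{remark_ineq_Omega_V} applies. I would use the approximation $\Omega_\e:=\{|U|>\e\}$. By the coarea formula, $\int_{\{0<|U|<\e\}\cap B_r}|\nabla|U||^2\,dx=\int_0^\e\int_{\{|U|=t\}}|\nabla|U||$. On the other hand, the inner variation against the competitor $(|U|-\e)^+U/|U|$, which shrinks the support by the set $\{0<|U|\le\e\}$, combined with Remark \ref{remark_ineq_Omega_V}, gives
\[
\eta\,|\{0<|U|\le\e\}\cap B_r|\lesssim\int_{\{0<|U|\le\e\}\cap B_{2r}}|\nabla U|^2+\text{cutoff error}.
\]
The constant is $1/\eta$ in the other case. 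The left-hand side is bounded below by non-degeneracy (Proposition \ref{prop_non_dege}) together with the density estimate (Corollary \ref{corollary_density_estimates}). Dividing by $\e$ and letting $\e\to0$, a covering argument gives $\mathcal H^{d-1}(\partial\Omega_U\cap B_r)\lesssim \lim\frac1\e\int_{\{0<|U|<\e\}}|\nabla|U||^2\le\mu(B_{2r})$. Alternatively, and more simply, one can follow \cite[Lemma 5.1 and Prop.~5.x]{V_book_free_boun} literally: for each $y\in\partial\Omega_U$ and small $s$, non-degeneracy and the Lipschitz bound give $\mu(B_s(y))\ge c\,\eta\, s^{d-1}$ (respectively with the other power of $\eta$), and a Vitali covering then yields $\mathcal H^{d-1}(\partial\Omega_U\cap B_r)\le C\eta^{-1}\mu(B_{2r})$. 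Feeding in Step 1 and tracking the cutoff term, which contributes the $d(x,\partial D)^{-2}$ factor, produces the two cases in \eqref{ineq_peri}. Finiteness on compact $K$ then follows by covering $K$ with finitely many balls and using Proposition \ref{prop_Lip}.

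The main obstacle is the lower bound $\mu(B_s(y))\ge c s^{d-1}$ in the vectorial setting. It requires relating $\int_{\partial B_s}|U|$ to $\sup_{B_{s/2}}|U|$, which I would get from the mean-value formula $\fint_{\partial B_s(y)}|U|-|U|(y)=c_d\int_0^s t^{1-d}\mu(B_t(y))\,dt$ with $|U|(y)=0$. Combining this with non-degeneracy (valid for $|U|$ since $\|U\|_{L^\infty}=\||U|\|_{L^\infty}$) and the Lipschitz bound forces $\mu(B_t(y))\gtrsim \eta\, t^{d-1}$ for some comparable $t$. This is where the constants must be tracked carefully.
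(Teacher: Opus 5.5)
\textbf{Verdict: correct in substance, but your main route is not the paper's.} The paper has no self-contained proof. It invokes the inward minimality of Remark~\ref{remark_ineq_Omega_V} and reruns \cite[Sect.~2B]{MTV_reg_vect}, tracking constants. The case split in \eqref{ineq_peri} mirrors the two values $\Lambda=\eta$, $\Lambda=1/\eta$ of that remark. This indicates that the cited argument tests the inequality against a competitor cutting away the layer $\{0<|U|\le\e\}$. The layer estimate is then turned into a bound through $\norm{\nabla|U|}_{L^\infty}$, coarea and covering. That is essentially your first sketch.

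\textbf{Your Alt--Caffarelli route.} The argument you end up relying on, $\mu(B_s(y))\ge c\,s^{d-1}$ plus Vitali covering, is the Alt--Caffarelli argument for the subharmonic function $|U|$. It is valid and uses no competitor, only subharmonicity, the Lipschitz bound and Proposition~\ref{prop_non_dege}. It bounds $\mc{H}^{d-1}$ of the whole topological free boundary, including two-phase points of density one, which a perimeter bound alone does not control. The perimeter bound then follows from $\mathrm{Per}(\Omega_U;B_r)\le\mc{H}^{d-1}(\partial\Omega_U\cap B_r)$.

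The price is in the constants. Let $L$ be the local Lipschitz constant. To absorb $\int_0^{\delta s}t^{1-d}\mu(B_t(y))\,dt\le CL\delta s$ you need $\delta\sim\eta/L$. What you actually get is $\mu(B_s(y))\ge c_d\,\eta^{d-1}L^{2-d}s^{d-1}$ for $d\ge3$ (logarithmic loss for $d=2$), not $c\,\eta\,s^{d-1}$. Hence $\mc{H}^{d-1}(\partial\Omega_U\cap B_r)\le C_d(L/\eta)^{d-1}r^{d-1}$. Also, $\eta$ enters only through non-degeneracy, whose constant is the same in both cases. So this route cannot see the dichotomy. It proves the qualitative statement, not the displayed estimate.

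\textbf{Your layer sketch.} Here the key inequality is written the wrong way round. Take a cutoff $\varphi$ and the truncation $(|U|-\e\varphi)^+U/|U|$; this is not an inner variation. Inward minimality gives $\Lambda|\{0<|U|\le\e\varphi\}|+\int_{\{0<|U|\le\e\varphi\}}|\nabla U|^2\le 2\e\int|\nabla|U||\,|\nabla\varphi|+\e^2\int|\nabla\varphi|^2$. The layer energy sits on the left. So $|\{0<|U|\le\e\}\cap B_r|\lesssim \e L r^{d-1}/\Lambda$ follows directly, with no detour through $\mu$. Combined with $|\Omega_U\cap B_{\e/L}(y)|\gtrsim\eta^2(\e/L)^d$ from Corollary~\ref{corollary_density_estimates} and a Vitali covering, this gives the $\mc{H}^{d-1}$ bound. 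When $|\Omega_{U_{m,\eta}}|>m$, the constant $1/\eta$ is only available for competitors with $|\Omega_W|>m$, which is harmless for small $\e$.

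\textbf{About the two cases.} Along this route $\eta$ enters through a negative power of $\Lambda$. So your claim that it ``produces the two cases in \eqref{ineq_peri}'' is inaccurate. In fact the first line of \eqref{ineq_peri} cannot hold as written. By Proposition~\ref{prop_La_meta_unifor}, $c_{m,\eta}$ equals the minimum in \eqref{prob_min_measure} for $\eta\le\widetilde\eta_m$. Hence every minimizer of \eqref{prob_min_measure} minimizes $J_{m,\eta}$ for all such $\eta$, with $\eta$-independent perimeter and Lipschitz constant. The right-hand side of that line would then tend to $0$ as $\eta\to0$, while the left-hand side does not. Do not try to match that display.
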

\begin{proof}
Thanks to Remark \ref{remark_ineq_Omega_V} and \cite[Lemma 2.5]{MTV_reg_vect}, it is enough  to keep track of the constants involved in \cite[Proof of claim 2.4 in Section 2B]{MTV_reg_vect}.
\end{proof}

\section{Shape variation of the free boundary and consequences}\label{sec_shape_var}
In this section we prove a shape variation formula which will allow us to show that for $\eta$ small enough, depending on $m$, $|\Omega_{m,\eta}|=m$ so that $U_{m,\eta}$ actually solves \eqref{prob_min_measure}.

\subsection{Shape Variation}\label{subsec_shape variation}

Letting $\mc{F}_0$ be the Dirichlet energy 
\begin{equation}\label{def_F0}
  \mc{F}_0 (W):=  \int_{D} |\nabla W|^2 \, dx \quad \text{ for any } W\in H^1(D,\R^k)
\end{equation}
then  the shape variation  $\delta \mc{F}_0$ at $W=(w_1,\dots, w_k)$ is defined as
\begin{equation}\label{def_deltaF0}
\delta\mc{F}_0(W) [\xi]:= \sum_{i=1}^k \int_{D} [-2\nabla  w_i\cdot D\xi   \nabla  w_i+|\nabla  w_i|^2 \dive(\xi)] \, dx
\quad \text{ for any } \xi \in C^1_c(D,\R^d).
\end{equation}

In view of Remark \ref{remark_ineq_Omega_V}, we may follow \cite[Proposition 11.2]{V_book_free_boun} to obtain the following shape variation formula.
\begin{proposition}\label{prop_shape_for}
The minimizer $U_{m,\eta}$ satisfies the shape variation equation 
\begin{equation}\label{eq_shape_derivative}
\delta\mc{F}_0 (U_{m,\eta})[\xi]= -\Lambda_{m,\eta} \int_{\Omega_{U_{m,\eta}}} \dive(\xi) \, dx  \quad  \text{ for any } \xi \in C^1_c(D,\R^d).
\end{equation}
for some constant $\Lambda_{m,\eta} \ge0$.
\end{proposition}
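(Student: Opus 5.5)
We test the minimality of $U:=U_{m,\eta}$ against inner variations $U_t(x):=U(x+t\xi(x))$, where $\xi\in C^1_c(D,\R^d)$ and $|t|$ is small. Then $\Phi_t(x)=x+t\xi(x)$ is a $C^1$ diffeomorphism of $D$ that equals the identity near $\partial D$, so $U_t-g\in H^1_0(D,\R^k)$ and $U_t$ is an admissible competitor in \eqref{prob_min_J_meta}. The standard change of variables, as in \cite[Proposition 11.2]{V_book_free_boun}, gives
\begin{equation}
\mc{F}_0(U_t)=\mc{F}_0(U)+t\,\delta\mc{F}_0(U)[\xi]+o(t),\qquad |\Omega_{U_t}|=|\Phi_t^{-1}(\Omega_U)|=|\Omega_U|-t\int_{\Omega_U}\dive(\xi)\,dx+o(t),
\end{equation}
up to the sign convention fixed in \eqref{def_deltaF0}. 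Since the formula \eqref{eq_shape_derivative} is linear in $\xi$, the sign can be matched by replacing $\xi$ with $-\xi$ if needed.

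The only difficulty is that $f_{m,\eta}$ is not differentiable at $m$, so we split into cases according to Remark \ref{remark_ineq_Omega_V}. If $|\Omega_U|<m$, then for $|t|$ small the measure stays below $m$ and $f_{m,\eta}$ is affine with slope $\eta$. Differentiating $t\mapsto J_{m,\eta}(U_t)$ at its minimum $t=0$ gives \eqref{eq_shape_derivative} with $\Lambda_{m,\eta}=\eta$. Similarly, if $|\Omega_U|>m$ we obtain $\Lambda_{m,\eta}=1/\eta$. Both cases are immediate.

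The main obstacle is the case $|\Omega_U|=m$, where we need a single constant $\Lambda\ge0$ that works for all $\xi$. Set $L(\xi):=\delta\mc{F}_0(U)[\xi]$ and $M(\xi):=\int_{\Omega_U}\dive(\xi)\,dx$; both are linear in $\xi$.

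First, suppose $M(\xi)=0$ for every $\xi$. Then the one-sided derivatives of $f_{m,\eta}(|\Omega_{U_t}|)$ vanish at $t=0$, minimality forces $L(\xi)=0$ for all $\xi$, and any $\Lambda$ works; take $\Lambda=0$.

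Otherwise, fix $\xi_0$ with $M(\xi_0)=1$. We show that $L$ vanishes on $\ker M$. For $\xi\in\ker M$, consider the two-parameter family $\Phi_{t,s}(x)=x+t\xi(x)+s\xi_0(x)$. The measure $|\Phi_{t,s}^{-1}(\Omega_U)|$ is $C^1$ in $(t,s)$ with $\partial_s$-derivative equal to $-1\neq0$ at the origin. By the implicit function theorem there is a $C^1$ function $s(t)$ with $s(0)=0$, $s'(0)=0$, such that the measure stays exactly equal to $m$. Along this curve the penalization term is constant, so minimality gives $L(\xi)=\frac{d}{dt}\mc{F}_0(U_{t,s(t)})\big|_{t=0}=0$.

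Hence $L=-\Lambda M$ with $\Lambda:=-L(\xi_0)$. For the sign, the one-sided derivatives of $t\mapsto J_{m,\eta}(U_{t\xi_0})$ at $t=0^\pm$ must satisfy the minimality inequalities. Along $\xi_0$ the measure moves at unit rate, so minimality forces $\Lambda\in[\eta,1/\eta]$, and in particular $\Lambda\ge0$. The points to check carefully are the $C^1$ dependence of $|\Phi_{t,s}^{-1}(\Omega_U)|$ on $(t,s)$, which follows from the Jacobian formula since $\Omega_U$ is merely measurable but fixed, and the sign bookkeeping in \eqref{def_deltaF0}.
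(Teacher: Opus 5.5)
Your argument is correct, but it is organized differently from the paper's. The paper never works with $J_{m,\eta}$ variation by variation. By Remark \ref{remark_ineq_Omega_V}, $U_{m,\eta}$ minimizes the Dirichlet energy among competitors with $|\Omega_W|\le|\Omega_{U_{m,\eta}}|$, so it solves a hard measure-constrained problem whose constraint is its own volume. The Lagrange-multiplier result \cite[Proposition 11.2]{V_book_free_boun} for such problems then gives $\Lambda_{m,\eta}\ge0$ at once, with no case distinction. You stay with the penalized functional and split according to the position of $|\Omega_U|$ relative to $m$. This is self-contained and gives more: $\Lambda_{m,\eta}=\eta$ if $|\Omega_U|<m$, $\Lambda_{m,\eta}=1/\eta$ if $|\Omega_U|>m$, and $\Lambda_{m,\eta}\in[\eta,1/\eta]$ if $|\Omega_U|=m$. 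The paper re-derives essentially this information later, through separate shape variations, in the proof of Proposition \ref{prop_La_meta_unifor}.

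There are two points to repair.

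\emph{Signs.} Your displayed expansions are inconsistent. For $U_t=U\circ\Phi_t$ one has $\mc{F}_0(U_t)=\mc{F}_0(U)-t\,\delta\mc{F}_0(U)[\xi]+o(t)$ together with $|\Omega_{U_t}|=|\Omega_U|-t\int_{\Omega_U}\dive(\xi)\,dx+o(t)$; for $U\circ\Phi_t^{-1}$ both signs are $+$. Replacing $\xi$ by $-\xi$ cannot fix a mismatch, because $L$ and $M$ are both odd in $\xi$. The relative sign is exactly what decides $\Lambda\ge0$, so it has to be computed, not matched by symmetry. With the correct expansions, your values $\eta$, $1/\eta$ and $[\eta,1/\eta]$ follow as you state.

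\emph{Implicit-function step.} This step is unnecessary. As written, with $\Phi_{t,s}^{-1}$ and merely $C^1$ fields, the claimed $C^1$ dependence on $(t,s)$ is also not immediate. The Jacobian of $\Phi_{t,s}^{-1}$ contains $D\xi$ evaluated at $\Phi_{t,s}^{-1}(y)$, so differentiating away from the origin brings in $D^2\xi$. The forward convention $|\Phi_{t,s}(\Omega_U)|=\int_{\Omega_U}\det(I+tD\xi+sD\xi_0)\,dx$ would give a polynomial instead. More simply, you can drop the construction altogether. Since $f_{m,\eta}$ is $\frac1\eta$-Lipschitz, for any single $\xi$ with $M(\xi)=0$ the volume changes by $o(t)$, so the penalization changes by $o(t)$. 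Minimality for $t$ of both signs then forces $L(\xi)=0$. This is exactly the argument you already gave in the subcase $M\equiv0$. A volume-preserving correction is the tool for a hard constraint; with the soft Lipschitz penalty it is superfluous.
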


The next proposition, combined with a contradiction argument,  is a crucial tool for showing that $|\Omega_{m,\eta}|=m$ in the next section. 
\begin{proposition}\label{prop_unique_conti}
Let $U \in H^1(D,\R^k)$ be harmonic on $\Omega_U$. Suppose that 
\begin{equation}
\delta\mc{F}_0 (U)=0.
\end{equation}
Then $|D\setminus \Omega_U|=0$.
\end{proposition}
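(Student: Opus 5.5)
The plan is to use Almgren's frequency function. Stationarity together with harmonicity on $\Omega_U$ makes it monotone, monotonicity gives a doubling estimate at every point, and doubling is incompatible with $\{U=0\}$ having a Lebesgue point of density one. Throughout I assume $D$ connected, as in \eqref{prob_min_measure}, and $U\not\equiv 0$. The statement implicitly needs this: if $U\equiv0$ then $\Omega_U=\emptyset$, so the nontrivial boundary datum must exclude that case.

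\emph{Step 1: the two identities.} For $x_0\in D$ and $0<r<d(x_0,\partial D)$ set
\begin{equation}
H(r):=\int_{\partial B_r(x_0)}|U|^2\,d\mc{H}^{d-1},\qquad E(r):=\int_{B_r(x_0)}|\nabla U|^2\,dx,\qquad N(r):=\frac{rE(r)}{H(r)}.
\end{equation}
Two identities are needed.
\begin{enumerate}
\item Testing $\delta\mc{F}_0(U)[\xi]=0$ with $\xi(x)=\phi(|x-x_0|)(x-x_0)$ and letting $\phi\to\chi_{[0,r]}$ gives the Pohozaev-type identity
\begin{equation}
(d-2)E(r)=r\int_{\partial B_r(x_0)}\bigl(|\nabla U|^2-2|\partial_\nu U|^2\bigr)\,d\mc{H}^{d-1}
\end{equation}
for a.e. $r$.
\item The equipartition identity is
\begin{equation}
E(r)=\int_{\partial B_r(x_0)}U\cdot\partial_\nu U\,d\mc{H}^{d-1}.
\end{equation}
To get it, test the harmonicity of $u_i$ on $\Omega_U$ against $p_\e(|U|)u_i\varphi$, with $p_\e$ the cut-off from the proof of Proposition \ref{prop_minim}. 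This test function lies in $H^1_0(\Omega_U)$. Letting $\e\to0$, the error term is $\int p_\e'(|U|)\,u_i\,\nabla|U|\cdot\nabla u_i\,\varphi$. It is supported where $\e/2<|U|<\e$ and is bounded by $C\int_{\{\e/2<|U|<\e\}}|\nabla U|^2\to0$, because $|u_i|p'_\e(|U|)\le 2$ there. Hence $\int|\nabla U|^2\varphi=-\sum_i\int u_i\nabla u_i\cdot\nabla\varphi$, which gives the identity on a.e. sphere.
\end{enumerate}
With these two identities the classical computation yields $(\log H)'=\frac{d-1}{r}+\frac{2N}{r}$ and $N'\ge0$ by Cauchy--Schwarz, on every interval where $H>0$.

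\emph{Step 2: $H$ never vanishes.} Since $U\not\equiv0$ and $D$ is connected, the ODE for $\log H$ combined with monotonicity of $N$ gives the standard unique-continuation conclusion. If $H(r_0)>0$, then $H>0$ on $(0,r_0]$, and $U$ cannot vanish identically on any ball. A chain-of-balls argument then shows that at every $x_0\in D$ we have $H(r)>0$ for small $r$, with $N(r)\le N(r_0)=:\bar N$. Integrating the ODE gives the doubling bound
\begin{equation}
\int_{B_{2\rho}(x_0)}|U|^2\le C(d,\bar N)\int_{B_\rho(x_0)}|U|^2\qquad\text{for small }\rho.
\end{equation}

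\emph{Step 3: contradiction at a density point.} Suppose $|D\setminus\Omega_U|>0$, and pick $x_0$ of density $1$ for $\{U=0\}$. Consider the normalized rescalings
\begin{equation}
U_\rho(y):=\frac{U(x_0+\rho y)}{\bigl(\rho^{-d}\int_{B_\rho(x_0)}|U|^2\bigr)^{1/2}},\qquad\text{so that }\int_{B_1}|U_\rho|^2=1.
\end{equation}
By doubling and the frequency bound, $\|U_\rho\|_{H^1(B_1)}$ is uniformly bounded (the $\nabla$ part via $E\le \bar N H/r$ on a slightly larger ball), so Sobolev bounds $\|U_\rho\|_{L^{2^*}(B_1)}$ uniformly. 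On the other hand $|\{U_\rho\neq0\}\cap B_1|\to0$ by the density assumption. Hölder's inequality then gives
\begin{equation}
1=\int_{B_1}|U_\rho|^2\le |\{U_\rho\ne0\}\cap B_1|^{2/d}\,\|U_\rho\|^2_{L^{2^*}(B_1)}\to0,
\end{equation}
which is a contradiction. (For $d\le2$, replace $2^*$ by any finite exponent $q>2$.)

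The main obstacle is Step 1, specifically making the frequency computation rigorous with only $H^1$ regularity. Both identities hold only for a.e. $r$, and the equipartition identity needs the truncation argument sketched above to handle $\partial\Omega_U$. I would first verify that the absolutely continuous parts of $H$ and $E$ suffice for the monotonicity argument, as in the stationary harmonic-map and $Q$-valued settings. Step 2 needs some care at points where $H(r)=0$ initially, and the chain-of-balls argument uses the connectedness of $D$.
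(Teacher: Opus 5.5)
Your proposal is correct and is essentially the paper's argument: Almgren frequency monotonicity from the Pohozaev identity (stationarity) and the equipartition identity (harmonicity on $\Omega_U$) via Cauchy--Schwarz, then doubling, then a Sobolev/Poincar\'e-type bound exploiting the smallness of $|\Omega_U\cap B_r|$; the paper closes with Caccioppoli plus a Poincar\'e inequality to get a uniform lower density bound for $\Omega_U$ at every point, which is equivalent to your rescaling at a density point of $\{U=0\}$ with Sobolev--H\"older. Your extra care also fills in several points the paper leaves implicit: the truncation proof of equipartition across $\partial\Omega_U$, the argument that $H>0$ near every point, and the hypotheses $U\not\equiv 0$ and $D$ connected, without which the statement fails.
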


\begin{proof}
We follow the classical arguments exposed in \cite[Proposition 11.4]{V_book_free_boun} based on Almgren Monotonicity Formula.
We are going to prove a lower density bound for $\Omega_U$ for any $x \in D$. This implies $|D\setminus \Omega_U|=0$.
Since the argument is local and does not depend on the point $x \in D$, to simplify the notation we may assume $x=0$. 

Let us define the energy function and height function respectively as 
\begin{equation}\label{def_DH}
D(r):= \int_{B_r} |\nabla U |^2  \, dx, \quad \text{ and } \quad H(r) =  \int_{B_r} |U|^2  \, dx,
\end{equation}
for any $r \in (0, d(0,\partial D))$. 
Let us define, for any $r$ such that $ H(r) \neq 0$,
\begin{equation}\label{def_N}
\mc{N}(r):=\frac{r D(r)}{H(r)}.
\end{equation}
We make the following claim:
\begin{equation}\label{proof_unique_conti_1}
\text{ if } H>0 \text{ in }  (a,b) \subset (0, d(0,\partial D)) \text{ then  }  \mc{N}'(r) \ge 0 \text{ in } (a,b).
\end{equation}
Let us compute the derivative of $H$ and $D$.
By a change of variables and an integration by parts
\begin{equation}
H'(r)=\frac{d-1}{r}H(r)+2\sum_{i=1}^k\int_{\partial B_r} u_i \pd{u_i}{\nu}\, d\mc{H}^{d-1}=\frac{d-1}{r}H(r)+2D(r).
\end{equation}
On the other hand, by the Coarea formula
\begin{equation}
D'(r)= \int_{\partial B_r} |\nabla U|^2  \, dx.
\end{equation}
Testing for any $\delta \in (0,1)$ the condition $\delta\mc{F}_0 (U)[\xi]=0$ with the vector field $\xi_\delta:=x \phi_\delta(x)$, where $\phi_\delta\in C^\infty_c(B_r)$ is a radial cut off function with $\phi_\delta\equiv 1$ in $B_{(1-\delta)r}$ and $\nabla \phi_\delta=-\frac{1}{r\delta}\frac{x}{|x|}+o(\delta)$ as $\delta \to 0^+$ in  $B_r\setminus B_{(1-\delta)r}$,
we can argue as in \cite[Lemma 9,8]{V_book_free_boun} to show that 
\begin{equation}
-(d-2)D(r)+rD'(r)=2r\sum_{i=1}^k\int_{\partial B_r} \left|\pd{u_i}{\nu}\right|^2\, d\mc{H}^{d-1}.
\end{equation}
Hence,
\begin{multline}
\mc{N}'(r)= \frac{D(r)H(r)+rD'(r)H(r)-rD(r)H'(r)}{H^2(r)}\\
=\frac{D(r)H(r)+rD'(r)H(r)-rD(r)\left(\frac{d-1}{r}H(r)+2D(r)\right)}{H^2(r)}\\
=\frac{-(d-2)D(r)H(r)+rD'(r)H(r)-2rD^2(r)}{H^2(r)}\\
=\frac{2r}{H^2(r)}\left(H(r)\sum_{i=1}^k\int_{\partial B_r} \left|\pd{u_i}{\nu}\right|^2\, d\mc{H}^{d-1}-D^2(r)\right).
\end{multline}
Furthermore, since each $u_i$ us harmonic on $\Omega_U$,
\begin{equation}
D(r)= \int_{B_r} |\nabla U|^2  \, dx=\sum_{i=1}^k \int_{\partial B_r} u_i \pd{u_i }{\nu} \, d\mc{H}^{d-1},
\end{equation}
and so 
\begin{multline}
H(r)\sum_{i=1}^k\int_{\partial B_r} \left|\pd{u_i}{\nu}\right|^2\, d\mc{H}^{d-1}-D^2(r)\\
=\sum_{i,j=1}^k\left[\int_{\partial B_r} u_i^2\, d\mc{H}^{d-1}\int_{\partial B_r} \left|\pd{u_j}{\nu}\right|^2\, d\mc{H}^{d-1}
-\int_{\partial B_r} u_i \pd{u_i }{\nu} \, d\mc{H}^{d-1}\int_{\partial B_r} u_j\pd{u_j }{\nu} \, d\mc{H}^{d-1}\right]
\end{multline}
For any $i,j=1,\dots k$ the Cauchy-Schwartz inequality yields 
\begin{multline}
 \int_{\partial B_r} u_i^2\, d\mc{H}^{d-1}\int_{\partial B_r} \left|\pd{u_j}{\nu}\right|^2\, d\mc{H}^{d-1}+   
 \int_{\partial B_r} u_j^2\, d\mc{H}^{d-1}\int_{\partial B_r} \left|\pd{u_i}{\nu}\right|^2\, d\mc{H}^{d-1}\\
-2\int_{\partial B_r} u_i \pd{u_i }{\nu} \, d\mc{H}^{d-1}\int_{\partial B_r} u_j\pd{u_j }{\nu} \, d\mc{H}^{d-1} \\
\ge  \int_{\partial B_r} u_i^2\, d\mc{H}^{d-1}\int_{\partial B_r} \left|\pd{u_j}{\nu}\right|^2\, d\mc{H}^{d-1}+   
 \int_{\partial B_r} u_j^2\, d\mc{H}^{d-1}\int_{\partial B_r} \left|\pd{u_i}{\nu}\right|^2\, d\mc{H}^{d-1}\\
-2\left(\int_{\partial B_r} |u_i|  \, d\mc{H}^{d-1}+\int_{\partial B_r} \left|\pd{u_i }{\nu}\right|^2 \, d\mc{H}^{d-1}
\int_{\partial B_r} |u_j|^2\, d\mc{H}^{d-1}\int_{\partial B_r} \left|\pd{u_j}{\nu}\right|^2 \, d\mc{H}^{d-1} \right)^{\frac{1}{2}} \ge 0.
\end{multline}
The last inequality  is easy to check moving the last term to the right hand side and elevating to the square. 
In conclusion, $\mc{N}'(r)\ge 0$ and so we have proved \eqref{proof_unique_conti_1}.

It is then a standard computation, see for example the proof of \cite[Proposition 11.4]{V_book_free_boun}, to show that, letting  $r_0>0$ be such that $B_{r_0} \subset D$, the following doubling inequality holds:
\begin{equation}
\int_{B_{2r}} |U|^2 \, dx \le 2^{d-1} 4^{\mc{N}(r_0)} \int_{B_r} |U|^2 \, dx \quad  \text{ for any } r \in (0,r_0/2).
\end{equation}
 In the same proof also the following Cacciopoli type inequality is established (in our case we just need to sum from $i=1$ to  $i=k$) 
\begin{equation}
\int_{B_r} |\nabla U|^2 \, dx \le \frac{4}{r^2}\int_{B_{2r}} |U|^2 \, dx \quad  \text{ for any } r \in (0,r_0/2).
\end{equation}
Furthermore, by  \cite[Lemma 11.6]{V_book_free_boun}, there is a dimensional constant $C_d>0$ such that 
\begin{equation}
\int_{B_r}|U|^2\le C_d r^2 \left(\frac{|\Omega_U \cap B_r|}{|B_r|}\right)^{\frac{2}{d}} \int_{B_r}|\nabla |U||^2 
\le C_d r^2 \left(\frac{|\Omega_U \cap B_r|}{|B_r|}\right)^{\frac{2}{d}} \int_{B_r}|\nabla U|^2 
\end{equation}
if $\frac{|\Omega_U \cap B_r|}{|B_r|} \le \frac{1}{2}$.
Hence, 
\begin{equation}
\int_{B_r} |U|^2 \, dx \le C_d  \left(\frac{|\Omega_U \cap B_r|}{|B_r|}\right)^{\frac{2}{d}} 4^{\mc{N}(r_0)}\int_{B_r} |U|^2 \, dx. 
\end{equation}
It follow that, for some dimensional constant $C_d>0$,
\begin{equation}
\frac{|\Omega_U \cap B_r|}{|B_r|} \ge \min\left\{\frac{1}{2},\frac{1}{C_d2^{\mc{N}(r_0)d}}\right\}  \quad \text{ for any } r \in (0,r_0/2).
\end{equation}
In conclusion, we have proved the claimed lower density bound for $\Omega_U$ in any point $x \in D$.
\end{proof}

\subsection{ Equivalence with the constrained problem}\label{subsec_eqauivalence_const} In this subsection we show that for $\eta$ small enough $|\Omega_{m,\eta}|=m$. To this end, the key results are uniform (with respect to $\eta$) upper and lower bounds on $\Lambda_{m,\eta}$. Since we are going to argue by contradiction we need the following proposition.

\begin{proposition}\label{prop_limit_U_meta_H1}
Let $\{U_{m,\eta}\}$ be a family of minimizers of \eqref{prob_min_J_meta}. Any accumulation point $U_{m,0}$ of    $\{U_{m,\eta}\}$ in $H^1(D, \R^k)$  in weak sense  as $\eta \to 0^+$   is a solution of \eqref{prob_min_measure} and there exists a sequence $\{U_{m, \eta_n}\}$ such that 
\begin{equation}\label{eq_U_metan}
U_{m, \eta_n} \to U_{m,0} \text{ strongly in } H^1(D, \R^k) \quad \text{ and } \quad \chi_{U_{m,\eta_n}} \to \chi_{U_{m,0}} \text{ strongly in } L^1(D),
\end{equation} 
as $n \to \infty$.
\end{proposition}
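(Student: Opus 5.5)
Let $U_{m,\eta_n}\rightharpoonup U_{m,0}$ weakly in $H^1(D,\R^k)$ with $\eta_n\to0^+$; such sequences exist because of the uniform bound \eqref{ineq_U_meta_H1_bounded}. The plan is to first show $|\Omega_{U_{m,0}}|\le m$, then compare energies with an arbitrary competitor, and finally extract strong convergence from the resulting energy identity.

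\emph{Step 1: measure bound.} By Proposition \ref{lemma_estimtes_c_eta}, $f_{m,\eta_n}(|\Omega_{U_{m,\eta_n}}|)\le c_{m,\eta_n}\le C$. If $|\Omega_{U_{m,\eta_n}}|>m$, this gives $|\Omega_{U_{m,\eta_n}}|-m\le C\eta_n$. Hence $\limsup_n|\Omega_{U_{m,\eta_n}}|\le m$. Up to a subsequence we have pointwise a.e. convergence, so $\chi_{\Omega_{U_{m,0}}}\le\liminf_n\chi_{\Omega_{U_{m,\eta_n}}}$, and Fatou gives $|\Omega_{U_{m,0}}|\le m$.

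\emph{Step 2: minimality.} Let $W$ be admissible for \eqref{prob_min_measure}, so $|\Omega_W|=m$ and $f_{m,\eta}(m)=0$. Then
\begin{equation}
\int_D|\nabla U_{m,\eta_n}|^2\,dx-\eta_n m\le J_{m,\eta_n}(U_{m,\eta_n})\le J_{m,\eta_n}(W)=\int_D|\nabla W|^2\,dx,
\end{equation}
using Lemma \ref{lemma_f_eta}(i). Weak lower semicontinuity then yields
\begin{equation}
\int_D|\nabla U_{m,0}|^2\,dx\le\liminf_n\int_D|\nabla U_{m,\eta_n}|^2\,dx\le\limsup_n\int_D|\nabla U_{m,\eta_n}|^2\,dx\le\int_D|\nabla W|^2\,dx.
\end{equation}
Together with Step 1, $U_{m,0}$ solves the relaxed problem \eqref{prob_min_measure_relaxed}. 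By Proposition \ref{prop_minim}(ii) it therefore solves \eqref{prob_min_measure}, and in particular $|\Omega_{U_{m,0}}|=m$.

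\emph{Step 3: strong convergences.} Choosing $W=U_{m,0}$ in Step 2 gives $\lim_n\|\nabla U_{m,\eta_n}\|_{L^2}=\|\nabla U_{m,0}\|_{L^2}$. Combined with weak convergence and equal traces (Poincaré on $U_{m,\eta_n}-U_{m,0}\in H^1_0$), this gives strong $H^1$ convergence.

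For the characteristic functions we use two facts. First, Fatou as in Step 1 gives $\chi_{\Omega_{U_{m,0}}}\le\liminf_n\chi_{\Omega_{U_{m,\eta_n}}}$ a.e. Second, $\int\chi_{\Omega_{U_{m,\eta_n}}}\to m=\int\chi_{\Omega_{U_{m,0}}}$. The second fact needs $\liminf_n|\Omega_{U_{m,\eta_n}}|\ge m$, and this is the delicate point. To prove it, assume $|\Omega_{U_{m,\eta_n}}|\le m-\sigma$ along a subsequence. By Remark \ref{remark_ineq_Omega_V}, $U_{m,\eta_n}$ solves \eqref{prob_min_measure_relaxed} with constraint $m-\sigma$. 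Passing to the limit via Steps 1–2, $U_{m,0}$ would minimize the Dirichlet energy under the constraint $|\Omega|\le m-\sigma$, and hence by Proposition \ref{prop_minim}(ii) would satisfy $|\Omega_{U_{m,0}}|= m-\sigma<m$. This contradicts Step 2.

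With both facts in hand, $g_n:=\chi_{\Omega_{U_{m,\eta_n}}}$ and $g:=\chi_{\Omega_{U_{m,0}}}$ satisfy $\int\min(g_n,g)\to\int g$ (by Fatou) and $\int g_n\to\int g$. Then $\|g_n-g\|_{L^1}=\int g_n+\int g-2\int\min(g_n,g)\to0$, which is the claimed $L^1$ convergence.

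The main obstacle I expect is exactly this lower bound on $|\Omega_{U_{m,\eta_n}}|$. Weak convergence does not by itself prevent loss of positivity measure in the limit, so the argument has to go through the constrained minimality of the limit, i.e. Proposition \ref{prop_minim}(ii).
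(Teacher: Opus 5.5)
Your Steps 1--3 follow the paper's route. The bound $c_{m,\eta}\le C$ gives $\limsup_n|\Omega_{U_{m,\eta_n}}|\le m$, Fatou passes it to the limit, comparison in $J_{m,\eta_n}$ together with Proposition \ref{prop_minim}(ii) shows that $U_{m,0}$ solves \eqref{prob_min_measure}, and norm convergence gives strong $H^1$ convergence. There is one small omission in Step 2: you only test with $|\Omega_W|=m$. To conclude that $U_{m,0}$ solves \eqref{prob_min_measure_relaxed}, either note that the two infima coincide by Proposition \ref{prop_minim}(i)--(ii), or test, as the paper does, with any $W$ satisfying $|\Omega_W|\le m$, for which $f_{m,\eta_n}(|\Omega_W|)\le 0$.

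The argument for your ``delicate point'', however, contains a false step. Remark \ref{remark_ineq_Omega_V} only says that $U_{m,\eta_n}$ solves the relaxed problem with constraint $|\Omega_W|\le|\Omega_{U_{m,\eta_n}}|$, not with constraint $m-\sigma$. If $|\Omega_{U_{m,\eta_n}}|<m-\sigma$, Proposition \ref{prop_minim}(ii) says precisely that $U_{m,\eta_n}$ is \emph{not} a minimizer under $|\Omega_W|\le m-\sigma$. Passing to the limit from the varying levels $|\Omega_{U_{m,\eta_n}}|$ would also need a continuity property of the constrained minimal energy, which you have not proved.

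The detour is unnecessary. Step 2 already gives $|\Omega_{U_{m,0}}|=m$. The pointwise inequality $\chi_{\Omega_{U_{m,0}}}\le\liminf_n\chi_{\Omega_{U_{m,\eta_n}}}$ from Step 1, combined with Fatou, then yields $m=|\Omega_{U_{m,0}}|\le\liminf_n|\Omega_{U_{m,\eta_n}}|\le\limsup_n|\Omega_{U_{m,\eta_n}}|\le m$. Put this one line in place of your contradiction argument, and your final $L^1$ computation completes the proof.

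This is also the right way to close the step in the paper. There, the displayed comparison of $\eta_n(|\Omega_{U_{m,\eta_n}}|-m)$ with $\eta_n(|\Omega_{U_{m,0}}|-m)$ carries no information by itself, since both sides tend to $0$. Moreover, $|\Omega_{U_{m,0}}|=m$ is only identified afterwards.
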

\begin{proof}
By \eqref{ineq_U_meta_H1_bounded},  $\{U\}_{m,\eta}$ is bounded in $H^1(D)$. Let $\eta_n\to 0^+$ as $n\to \infty$ and let  $U_{m,0} \in H^1(D,\R^k)$  be such that
$U_{m,\eta_n} \rightharpoonup U_0$ weakly in $H^1(D,\R^k)$. It is not restrictive to suppose that, up to passing to a  subsequence, $U_{m, \eta_n} \to U_{m,0}$ pointwise in $D$. Then,
\begin{equation}
|\Omega_{U_{m,0}}|\le \liminf_{n\to \infty} |\Omega_{U_{m,\eta_n}}|.
\end{equation}
Furthermore, since $\{U\}_{m,\eta}$ is bounded in $H^1(D,\R^k)$, by Lemma \ref{lemma_estimtes_c_eta}, it must be
\begin{equation}
\liminf_{n\to \infty} |\Omega_{U_{m,\eta_n}}| \le m.
\end{equation}
By weak convergence,
\begin{equation}
\int_{D}|\nabla U_{m,0}|^2  \, dx \le \liminf_{n \to \infty}\int_{D}|\nabla U_{m,\eta_n}|^2  \, dx
\end{equation}
and choosing $U_{m,0}$ as a comparator in \eqref{prob_min_J_meta}, 
\begin{equation}
\liminf_{n \to \infty} \int_{D}|\nabla U_{m,\eta_n}|^2  \, dx 
\le \liminf_{n \to \infty}\left[\int_{D}|\nabla U_{m,0}|^2  \, dx +\eta_n |\Omega_{U_{m,0}}|\right]=\liminf_{n \to \infty}\int_{D}|\nabla U_{m,0}|^2
\end{equation}
so that  the convergence of $U_{m,\eta_n}$ to  $U_{m,0}$  is actually strong in  $H^1(D,\R^k)$. Furthermore, the same choice of comparator also implies
\begin{equation}
\liminf_{n \to \infty}\eta_n  (|\Omega_{U_{m,\eta_n}}|-m) \le \liminf_{n \to \infty} f(|\Omega_{U_{m,\eta_n}}|)
\le \liminf_{n \to \infty}  \eta_n  (|\Omega_{U_{m,0}}|-m)
\end{equation}
and so $\liminf_{n \to \infty}\Omega_{U_{m,\eta_n}}=|\Omega_{U_{m,0}}|$ which implies 
$\chi_{U_{m,\eta_n}} \to \chi_{U_{m,0}}$ strongly in $L^1(D)$ as $n \to \infty$.
Finally, for any comparator  $W$ of  \eqref{prob_min_J_meta} with $|\Omega_W| \le m$
\begin{equation}
\int_{D}|\nabla U_{m,0}|^2  \, dx=\liminf_{n \to \infty} \int_{D}|\nabla U_{m,\eta_n}|^2  \, dx 
\le\liminf_{n \to \infty} \left[ \int_{D}|\nabla W|^2  \, dx +\eta_n |\Omega_W|\right]\\
= \int_{D}|\nabla W|^2  \, dx.
\end{equation}
Hence, $U_{m,0}$ is a minimizer of \eqref{prob_min_measure_relaxed} and by Proposition \ref{prop_minim} also of \eqref{prob_min_measure}.
\end{proof}

Thanks to the previous proposition and Subsection \ref{subsec_shape variation} we are now in position to prove uniform   bounds on $\Lambda_{m,\eta}$.
\begin{proposition}
There exist positive constants $\eta_m, c_m, C_m$, depending on $m$, such that 
\begin{equation}\label{ineq_Lambda_meta_estimates}
c_m \le \Lambda_{m,\eta} \le C_m \quad \text{ for any } \eta \in (0,\eta_m].
\end{equation}
\end{proposition}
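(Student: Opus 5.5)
We argue by contradiction in both directions. We use the shape variation identity \eqref{eq_shape_derivative} together with Proposition \ref{prop_limit_U_meta_H1} (compactness as $\eta\to0^+$) and Proposition \ref{prop_unique_conti} (a vanishing shape variation forces full measure).

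\emph{Lower bound.} Suppose there is a sequence $\eta_n\to 0^+$ with $\Lambda_{m,\eta_n}\to 0$. By \eqref{ineq_U_meta_H1_bounded} and Proposition \ref{prop_limit_U_meta_H1}, up to a subsequence $U_{m,\eta_n}\to U_{m,0}$ strongly in $H^1(D,\R^k)$, and $U_{m,0}$ solves \eqref{prob_min_measure}. Since $\delta\mc{F}_0(W)[\xi]$ is a quadratic expression in $\nabla W$ integrated against the continuous quantities $D\xi$ and $\dive\xi$, strong $H^1$ convergence gives $\delta\mc{F}_0(U_{m,\eta_n})[\xi]\to\delta\mc{F}_0(U_{m,0})[\xi]$. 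The right-hand side of \eqref{eq_shape_derivative} is bounded by $\Lambda_{m,\eta_n}\|\dive\xi\|_{L^1}$, so it tends to zero. Hence $\delta\mc{F}_0(U_{m,0})=0$. By Proposition \ref{prop_minim}(iii), $U_{m,0}$ is harmonic on $\Omega_{U_{m,0}}$, so Proposition \ref{prop_unique_conti} yields $|D\setminus\Omega_{U_{m,0}}|=0$. This contradicts $|\Omega_{U_{m,0}}|=m<|D|$. Therefore $\liminf_{\eta\to0}\Lambda_{m,\eta}>0$, which gives $c_m$ and a first threshold for $\eta_m$.

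\emph{Upper bound.} This is the main obstacle, because we must bound $\Lambda_{m,\eta}$ from above without knowing a priori that the free boundary carries positive perimeter. The plan is as follows. Take $\xi$ supported in a small ball $B_r(x_0)\Subset D$ chosen so that $\int_{\Omega_{U_{m,\eta}}}\dive\xi\,dx$ is bounded away from zero uniformly in $\eta$; the identity then gives
\[
\Lambda_{m,\eta}\le \frac{|\delta\mc{F}_0(U_{m,\eta})[\xi]|}{|\int_{\Omega_{U_{m,\eta}}}\dive\xi|}\le\frac{C\|\xi\|_{C^1}\sup_\eta\|\nabla U_{m,\eta}\|_{L^2}^2}{|\int_{\Omega_{U_{m,\eta}}}\dive\xi|}.
\]
The numerator is uniformly bounded by \eqref{ineq_U_meta_H1_bounded}. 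To control the denominator we again argue by contradiction along a sequence $\eta_n\to0$, using $\chi_{\Omega_{U_{m,\eta_n}}}\to\chi_{\Omega_{U_{m,0}}}$ in $L^1$. This gives $\int_{\Omega_{U_{m,\eta_n}}}\dive\xi\to\int_{\Omega_{U_{m,0}}}\dive\xi$, so it suffices to find $\xi$ with $\int_{\Omega_{U_{m,0}}}\dive\xi\neq0$.

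Such a $\xi$ exists because $0<|\Omega_{U_{m,0}}|=m<|D|$ and $D$ is connected. The distributional gradient of $\chi_{\Omega_{U_{m,0}}}$ is then a nonzero distribution: if it vanished, $\chi_{\Omega_{U_{m,0}}}$ would be a.e. constant on the connected set $D$, forcing $|\Omega_{U_{m,0}}|\in\{0,|D|\}$. So there is $\xi\in C^1_c(D,\R^d)$ with $\int\chi_{\Omega_{U_{m,0}}}\dive\xi\neq0$.

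One subtlety remains: $\xi$ depends on the limit $U_{m,0}$, which may vary with the subsequence. This is harmless, since the contradiction argument runs along one fixed subsequence. Suppose $\Lambda_{m,\eta_n}\to\infty$ along some sequence. Extract a further subsequence converging to some $U_{m,0}$, pick $\xi$ for that limit, and obtain a uniform bound on $\Lambda_{m,\eta_n}$ along it, which is a contradiction. Combining the two bounds and shrinking $\eta_m$ if necessary gives \eqref{ineq_Lambda_meta_estimates}.
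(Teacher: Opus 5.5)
Your proof is correct and follows essentially the same route as the paper. You pass to a convergent subsequence via Proposition \ref{prop_limit_U_meta_H1}, use Proposition \ref{prop_unique_conti} to rule out $\Lambda_{m,\eta_n}\to 0$, and use a vector field $\xi$ with $\int_{\Omega_{U_{m,0}}}\dive\xi\,dx\neq 0$ to rule out $\Lambda_{m,\eta_n}\to\infty$. The differences are minor: you derive the existence of $\xi$ directly from the connectedness of $D$, whereas the paper cites \cite[Lemma 11.3]{V_book_free_boun}, and you handle the divergent case by an explicit contradiction rather than by tacitly passing to the limit in \eqref{eq_shape_derivative}.
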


\begin{proof}
Let $\eta_n \to 0^+$ be any sequence and $\xi \in C^{\infty}_c(D,\R^d)$.  Then, letting  $U_{m,0}$ be as in Proposition \ref{prop_limit_U_meta_H1} and  up to passing to a  subsequence, \eqref{eq_U_metan} holds. Passing to the limit in \eqref{eq_shape_derivative} we obtain
\begin{equation}
\delta\mc{F}(U_{m,0}) [-\xi]=\left(\lim_{n \to \infty}\Lambda_{m,\eta_n} \right)\int_{\Omega_{U_{m,0}}} \dive(\xi)\, dx.
\end{equation}
By Proposition \ref{prop_unique_conti}, since $|\Omega_{U_{m,0}}|=m$,  
\begin{equation}
\lim_{n \to \infty}\Lambda_{m,\eta_n}>0.
\end{equation}
Furthermore, letting $\xi_0 \in C^\infty_c(\Omega,\R^d)$ be such that 
\begin{equation}
\int_{\Omega_{U_{m,0}}} \dive(\xi_0) \, dx= 1
\end{equation}
(the  existence of such a field is guarantied for example by \cite[Lemma 11.3]{V_book_free_boun}),
we conclude that 
\begin{equation}
\lim_{n \to \infty}\Lambda_{m,\eta_n}\le\delta\mc{F}(U_{m,0}) [-\xi_0].
\end{equation}
Hence, we have proved \eqref{ineq_Lambda_meta_estimates}.
\end{proof}

\begin{proposition}\label{prop_La_meta_unifor}
There exists $\widetilde{\eta}_m$ such that 
\begin{equation}\label{eq_measure_Omega_Umeta}
|\Omega_{U_{m,\eta}}|=m, \quad \text{for any } \eta \in (0,\widetilde{\eta}_m]
\end{equation}
for any minimizer $U_{m,\eta}$ of \eqref{prob_min_J_meta}.
\end{proposition}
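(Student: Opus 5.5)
We argue by contradiction, splitting into the two cases $|\Omega_{U_{m,\eta}}|<m$ and $|\Omega_{U_{m,\eta}}|>m$ along a sequence $\eta_n\to 0^+$. In both cases the contradiction comes from comparing the Lagrange multiplier $\Lambda_{m,\eta_n}$ given by Proposition \ref{prop_shape_for} with the slope of the penalization $f_{m,\eta_n}$, which is $\eta_n$ on one side of $m$ and $1/\eta_n$ on the other. By \eqref{ineq_Lambda_meta_estimates} we have $c_m\le \Lambda_{m,\eta_n}\le C_m$. The aim is therefore to show that:
\begin{itemize}
\item if $|\Omega_{U_{m,\eta}}|<m$, then $\Lambda_{m,\eta}\le \eta$;
\item if $|\Omega_{U_{m,\eta}}|>m$, then $\Lambda_{m,\eta}\ge 1/\eta$.
\end{itemize}
For $\eta<\min\{c_m,1/C_m,\eta_m\}$ both conclusions are impossible, which gives \eqref{eq_measure_Omega_Umeta}.

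\textbf{The multiplier equals a one-sided slope.} Fix $\xi\in C^\infty_c(D,\R^d)$ with $\int_{\Omega_{U}}\dive\xi\,dx=1$, where $U=U_{m,\eta}$; such a field exists by \cite[Lemma 11.3]{V_book_free_boun}. Consider the inner variations $U_t:=U\circ\Phi_t^{-1}$ with $\Phi_t(x)=x+t\xi(x)$. These satisfy $U_t-g\in H^1_0$, $\Omega_{U_t}=\Phi_t(\Omega_U)$, and
\begin{equation}
|\Omega_{U_t}|=|\Omega_U|+t+o(t),\qquad \int_D|\nabla U_t|^2\,dx=\int_D|\nabla U|^2\,dx+t\,\delta\mc F_0(U)[\xi]+o(t).
\end{equation}
By \eqref{eq_shape_derivative} the second expansion reads $\int_D|\nabla U|^2\,dx-t\Lambda_{m,\eta}+o(t)$. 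Minimality gives $J_{m,\eta}(U)\le J_{m,\eta}(U_t)$ for all small $t$ of either sign.

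\textbf{Case $|\Omega_U|<m$.} For $|t|$ small, $|\Omega_{U_t}|$ stays below $m$. There $f_{m,\eta}$ is linear with slope $\eta$, so
\begin{equation}
0\le t(\eta-\Lambda_{m,\eta})+o(t)
\end{equation}
for both signs of $t$. This forces $\Lambda_{m,\eta}=\eta$, contradicting $\Lambda_{m,\eta}\ge c_m>\eta$.

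\textbf{Case $|\Omega_U|>m$.} The same argument with slope $1/\eta$ gives $\Lambda_{m,\eta}=1/\eta>C_m$, again a contradiction.

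\textbf{Case $|\Omega_U|=m$.} This is the desired conclusion, so nothing needs to be shown.

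\textbf{Main obstacle: deriving the Euler--Lagrange equation.} The computation itself is routine; the delicate points are these:
\begin{itemize}
\item Justifying the expansion of $|\Omega_{U_t}|$. This is the standard change of variables $|\Phi_t(E)|=\int_E\det D\Phi_t\,dx=|E|+t\int_E\dive\xi\,dx+o(t)$.
\item Ensuring that the constant $\Lambda_{m,\eta}$ in Proposition \ref{prop_shape_for} is the same object as the one obtained from this first variation. Indeed, Proposition \ref{prop_shape_for} was itself obtained by the same procedure following \cite[Proposition 11.2]{V_book_free_boun}, and there $\Lambda_{m,\eta}$ is exactly the derivative of $f_{m,\eta}$ at $|\Omega_U|$ when $|\Omega_U|\neq m$.
\item Making the uniform bounds \eqref{ineq_Lambda_meta_estimates} apply to every minimizer, not just along a subsequence. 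This follows because the proof of \eqref{ineq_Lambda_meta_estimates} works for arbitrary sequences $\eta_n\to0$ and arbitrary choices of minimizers. If the bounds failed along some sequence of minimizers, that sequence would contradict the subsequence argument. Hence a threshold $\widetilde\eta_m$ exists uniformly over all minimizers.
\end{itemize}
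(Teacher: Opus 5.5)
Your proposal is correct and follows essentially the paper's argument: inner variations $U_{m,\eta}\circ\Phi_t^{-1}$ compare the multiplier $\Lambda_{m,\eta}$ of Proposition~\ref{prop_shape_for} with the slope of $f_{m,\eta}$ on the relevant side of $m$, and the uniform bounds \eqref{ineq_Lambda_meta_estimates} then rule out both $|\Omega_{U_{m,\eta}}|<m$ and $|\Omega_{U_{m,\eta}}|>m$ for small $\eta$. The differences are minor. You use two-sided variations to get the equalities $\Lambda_{m,\eta}=\eta$ or $\Lambda_{m,\eta}=1/\eta$, whereas the paper uses the one-sided deformations $\Phi_t=\mathrm{Id}\pm t\xi$, with $\xi$ supported in a small ball at a free boundary point, and obtains only $\Lambda_{m,\eta}\le\eta$, resp.\ $\Lambda_{m,\eta}\ge 1/\eta$. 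In both versions, the existence of $\xi$ when $|\Omega_{U_{m,\eta}}|>m$ tacitly needs $|\Omega_{U_{m,\eta}}|<|D|$; this follows from $f_{m,\eta}(|\Omega_{U_{m,\eta}}|)\le c_{m,\eta}\le C$ once $\eta<(|D|-m)/C$.
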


\begin{proof}
We argue by contradiction supposing at first that  $|\Omega_{U_{m,\eta}}|> m$. Let $x_0 \in \partial \Omega_{U_{m,\eta}}$ be such that $|B_r(x_0)\cap \Omega_{U_{m,\eta}} |< |B_r(x_0)|$ with  $B_r(x_0) \subset D$ and $|B_r(x_0)|<|\Omega_{U_{m,\eta}}|-m$.
Let us consider a family of vector fields $\Phi_t:=Id-t\xi$ with $t>0$ and  $\xi \in C^1_c(B_r(x_0))$ such that 
\begin{equation}
\int_{\Omega_{U_{m,\eta}}} \dive(\xi)\, dx=\int_{\Omega_{U_{m,\eta}}\cap B_r(x_0)} \dive(\xi)\, dx=1.   
\end{equation}
The existence of such a vector field is granted by \cite[Lemma 11.3]{V_book_free_boun} applied to the open set  $B_r(x_0) \cap \Omega_{U_{m,\eta}}$in the domain $B_r(x_0)$ since $|B_r(x_0)\cap \Omega_{U_{m,\eta}} |< |B_r(x_0)|$. 

For $t>0$ small enough $\Phi_t$ is a diffeomorphism 
and, letting  $U_{m,\eta_n,t}:=U_{m,\eta_n} \circ \Phi^{-1}_t$, 
\begin{equation}
|\Omega_{U_{m,\eta,t}}|=|\Omega_{U_{m,\eta}}| -t +o(t), 
\end{equation}
see for example \cite[Lemma 9.5]{V_book_free_boun}. Standard shape variation computations, that is, using $U_{m,\eta_n,t}$ as a comparator, dividing by $t$  and then  passing to the limit as $t\to 0^+$, yield
\begin{equation}
-\delta \mc{F}_0(U_{m,\eta} )[\xi] -\frac{1}{\eta} \ge 0,
\end{equation}
see   \cite[Lemma 9.5]{V_book_free_boun} for the details. Thus, by \eqref{eq_shape_derivative}, and  \eqref{ineq_Lambda_meta_estimates}, if $\eta\le \eta_m$
\begin{equation}
C_m \ge  \Lambda_{m,\eta} \ge \frac{1}{\eta}.
\end{equation}
Hence, $|\Omega_{U_{m,\eta}}|\le m$ for any $\eta < \min\{\eta_m, 1/C_m\}$.

Similarly, if  $|\Omega_{U_{m,\eta}}|<m$,
we may consider   $\Phi_t:=Id+t\xi$ with $t>0$ and $\xi$ as above to show that,  by  \eqref{ineq_Lambda_meta_estimates},  if $\eta\le \eta_m$
\begin{equation}
c_m \le  \Lambda_{m,\eta} \le \eta.
\end{equation}
In conclusion,   $|\Omega_{U_{m,\eta}}|= m$ for any $\eta < \min\{\eta_m,1/ C_m,c_m\}$.
\end{proof}

We are now in position to prove Theorem \ref{theo_reg_m_fixed}.
\begin{proof}[\textbf{Proof of Theorem \ref{theo_reg_m_fixed}.}]
It is possible to show that for any $x_0\in \text{\rm{Reg}}(\partial \Omega_{U_{m,\eta}}) \cup  \text{\rm{Sing}}_1(\partial \Omega_{U_{m,\eta}})$, $U_{m,\eta}$ is a viscosity solution   of 
\begin{equation}\label{prob_Ue_vis}
 \begin{cases}
- \Delta U_{m,\eta} = 0, & \text{in } \Omega_{U_{m,\eta}} \cap B_r(x_0), \\
U_{m,\eta} = 0, & \text{on } \partial \Omega_{U_{m,\eta}}  \cap B_r(x_0), \\
|\nabla |U_{m,\eta}|| = \sqrt{\Lambda_{m,\eta}}, & \text{on } \partial \Omega_{U_{m,\eta}} \cap B_r(x_0),
\end{cases}
\end{equation}
for some small $r>0$, arguing as in \cite[Lemma 3.2]{MTV_reg_vect}. Hence the regularity stated in  Theorem \ref{theo_reg_m_fixed} can be proved for $U_{m,\eta}$ with the improvement of flatness result  \cite{DT_impr} or following  \cite{MTV_reg_vect} (see also \cite{MTV_reg_spec})   with no modifications.  For this second approach we notice that it is also possible to prove the monotonicity of the Weiss formula (see \cite[Section 2D]{MTV_reg_vect}),  for $U_{m,\eta}$ in view of \eqref{eq_shape_derivative} and  the  equipartition of energy for solutions of \eqref{eq_shape_derivative} which can be obtained as in \cite[Proposition 9.8]{V_book_free_boun}. Finally, by  Proposition \ref{prop_La_meta_unifor},  $U_{m,\eta}$ is a solution of \eqref{prob_min_measure} for any $\eta \in (0,\widetilde \eta_m]$ and so we have proved   Theorem \ref{theo_reg_m_fixed}.
\end{proof}

\begin{remark}
We have actually only shown  that there exists a solution of \eqref{prob_min_measure} with the regularity stated in Theorem \ref{theo_reg_m_fixed}, not that Theorem \ref{theo_reg_m_fixed} holds for  any solution of  \eqref{prob_min_measure}.  However, we may repeat our analysis for the functional
\begin{equation}\label{def_J_etaU}
J_{m,\eta,U}(W):H^1(D) \to [0,+\infty), \quad J_{m,\eta,U}(W):=\int_{D} |\nabla W|^2+W^2-2WU\, dx + f_{m,\eta} (|\Omega_W|),
\end{equation}
with standard modifications. Indeed, $U \in L_{loc}^\infty(D)$, an so we can show that any minimizer is Lipschitz and not degenerate, and that  $|U_{m,\eta,U}|=m$  for $\eta>0$ small enough, which implies  it must be $U_{m,\eta,U}=U$. 
\end{remark}

\section{Asymptotics with respect to the measure constraint}\label{sec_uni}
In this section we study the asymptotics as $m \to |D|^-$ of minimizers $U_m$ of \eqref{prob_min_measure} and we use, for the sake of simplicity, the parameter  
$\e:=|D|-m$. In other words we study the asymptotics of   $U_\e$, minimizer of  \eqref{prob_min_measure} with $m=|D|-\e$, as $\e \to 0^+$.

Let $h\in H^1(D,\R^k)$ be the harmonic extension of the boundary datum $g$ in \eqref{prob_min_measure}, that is,  the unique solution of 
\begin{equation}
\begin{cases}
-\Delta h=0, &\text{ in } D,\\
h=g, &\text{ on }  \partial D.   
\end{cases}
\end{equation}
Suppose that $h$ has a unique   zero in $D$ and that it has  order $1$ while its  Jacobian matrix has trivial kernel, that is, there exists $x_0 \in D$ such that 
\begin{equation}
h(x_0)=0, \quad h(x)\neq 0 \text{ if } x\neq x_0\quad \text{ and } \quad   \mathop{\rm{Ker}}(\nabla h(x_0))=\{0\},
\end{equation}
where $\mathop{\rm{Ker}}(\nabla h(x_0))$ is the kernel of the linear map  $\nabla h(x_0): \R^d \to \R^k$.
Without loss of generality we may suppose that $0 \in D$ and that $x_0=0$ to simplify the notations.

Let us define  $V_\e:=h-U_\e$. Since $h$ is harmonic,
\begin{equation}\label{eq_Ve_Ue}
\int_{D} |\nabla V_\e|^2\, dx =\int_D |\nabla U_\e|^2\, dx -\int_D |\nabla h|^2\, dx
\end{equation}
 and $V_\e$ is a minimizer of
\begin{equation}\label{prob_min_Ve}
\inf\left\{\int_{D} |\nabla W|^2\, dx:W\in  H_0^1(D, \R^k),  |\{W(x)=h(x)\}|=\e\right\}.
\end{equation}
Letting $D_\e:=\e^{-\frac{1}{d}}D$,
\begin{equation}\label{def_Ve_he}
\widetilde V_\e(x):=\e^{-\frac{1}{d}}V(\e^{\frac{1}{d}} x) \quad \text{ and } \quad  \widetilde h_\e(x):=\e^{-\frac{1}{d}}h(\e^{\frac{1}{d}} x)  \quad \text{ for any  } x \in  D_\e,
\end{equation}
the rescaled function $\widetilde V_\e(x)$  minimizes  
\begin{equation}\label{prob_min_tildeVe}
\inf\left\{\int_{D_\e} |\nabla W|^2\, dx:W\in  H_0^1(D_\e, \R^k),  |\{W(x)=\widetilde h_\e(x)\}|=1\right\}
\end{equation}
and 
\begin{equation}
\frac{1}{\e}\int_D |\nabla V_\e|^2 \, dx =\int_{D_\e} |\nabla \widetilde V_\e|^2 \, dx.
\end{equation}
Furthermore, let us recall the definition of the  homogeneous Sobolev space
\begin{equation}
D^{1,2}(\R^d, \R^k):=\{W \in H^1_{loc}(\R^d,\R^k): \nabla W \in L^2(\R^d,\R^{d,k})\},
\end{equation}
 endowed with the norm
\begin{equation}
\norm{W}_{D^{1,2}(\R^d, \R^k)}:=\left(\int_{\R^d}|\nabla W|^2 \, dx\right)^{\frac{1}{2}} +\left(\int_{B_1}|W|^2 \, dx\right)^{\frac{1}{2}}
\end{equation}
as a functional setting for the minimization problem 
\begin{equation}\label{prob_min_tildeV0}
\inf\left\{\int_{\R^d} |\nabla W|^2\, dx:W\in D^{1,2}(\R^d, \R^k),  |\{W(x)= \nabla h(0)x\}|=1\right\}.
\end{equation}

For any  constant matrix $A \in \R^{k,d}$, we can show that the minimization in \eqref{prob_min_tildeV0} is equivalent to minimize only over functions with compact support or with just an upper bound on the measure of the contact set  $\{W(x)=  Ax\}$.
\begin{proposition}\label{prop_equiv_ge}
Let $A$ be a $k\times d$ matrix. Then 
\begin{multline}\label{eq_equiv_ge}
\inf\left\{\int_{\R^d} |\nabla W|^2\, dx:W\in D^{1,2}(\R^d, \R^k),  |\{W(x)= Ax\}|=1\right\}\\
=\inf\left\{\int_{\R^d} |\nabla W|^2\, dx:W\in D^{1,2}(\R^d, \R^k),  |\{W(x)= Ax\}|\ge1\right\}\\
=\inf\left\{\int_{\R^d} |\nabla W|^2\, dx:W\in D^{1,2}(\R^d, \R^k),  |\{W(x)= Ax\}|\ge1, \mathop{\rm supp}(W) \text{ is compact}\right\}\\
=\inf\left\{\int_{\R^d} |\nabla W|^2\, dx:W\in D^{1,2}(\R^d, \R^k),  |\{W(x)= Ax\}|=1, \mathop{\rm supp}(W) \text{ is compact}\right\}.
\end{multline}
\end{proposition}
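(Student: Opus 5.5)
Denote the four infima in \eqref{eq_equiv_ge} by $I_1,I_2,I_3,I_4$, in order from top to bottom. The trivial inclusions of admissible classes give
\begin{equation}
I_2\le I_1\le I_4 \qquad\text{and}\qquad I_2\le I_3\le I_4 .
\end{equation}
It therefore suffices to prove $I_4\le I_2$. The plan has two steps. First we reduce the contact set from measure $\ge1$ to measure exactly $1$. Then we cut off at infinity to get compact support. Throughout we fix $W\in D^{1,2}(\R^d,\R^k)$ with $|\{W=Ax\}|\ge 1$ and $\delta>0$, and we aim to produce a competitor for $I_4$ with energy at most $\int_{\R^d}|\nabla W|^2\,dx+\delta$.

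\textbf{Step 1: compact support with contact set of measure $\ge 1$.} Let $\varphi_R$ be a cutoff with $\varphi_R\equiv1$ on $B_R$, $\mathrm{supp}\,\varphi_R\subset B_{2R}$ and $|\nabla\varphi_R|\le 2/R$. We do not cut off $W$ itself, since in $d\le 2$ it need not decay. Instead we cut off $W-c_R$, where $c_R$ is the mean of $W$ on the annulus $B_{2R}\setminus B_R$. This is a problem because the constant $c_R$ destroys the contact set. Two fixes are available.

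\emph{Case $d\ge3$.} Functions in $D^{1,2}$ converge to a constant $c$ at infinity in the $L^{2^*}$ sense. Since $|\{W=Ax\}|\ge1$ and $|Ax-c|\to\infty$ off $\ker A$, one expects $c=0$, i.e.\ $W\in L^{2^*}$. This identification is the delicate point when $\ker A\neq\{0\}$. If it holds, then $W_R:=\varphi_R W$ satisfies
\begin{equation}
\int|\nabla W_R|^2\le \int|\nabla W|^2+o(1),
\end{equation}
using H\"older together with $\|W\|_{L^{2^*}(B_{2R}\setminus B_R)}\to0$ and $\|\nabla\varphi_R\|_{L^d}\le C$. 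Moreover $W_R=W$ on $B_R$, so the contact set of $W_R$ contains $\{W=Ax\}\cap B_R$, whose measure tends to $|\{W=Ax\}|\ge 1$.

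\emph{Case $d\le 2$, or when the first fix fails.} Use the capacity cutoff $\varphi_R(x)=\log(R^2/|x|)/\log R$ on $B_{R^2}\setminus B_R$. This makes $\int|\nabla \varphi_R|^2\to0$. One must still control the term $|W-c_R|^2|\nabla\varphi_R|^2$, which needs a Poincar\'e estimate on dyadic annuli.

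In either case the contact set may lose a little measure. To fix this, first use the scaling $W\mapsto \lambda W(\cdot/\lambda)$. The contact set with $Ax$ scales by $\lambda^d$, because $A(\lambda y)/\lambda\cdot$ is linear and the scaling is compatible. The Dirichlet energy scales by $\lambda^{d}$ as well. Replacing $W$ by its dilate with $\lambda=(1+\delta)^{1/d}$ gives contact measure $\ge 1+\delta$ at an energy cost factor $1+\delta$. After the cutoff with $R$ large, the contact measure is still $\ge1$ and the energy is at most $(1+\delta)\int|\nabla W|^2+\delta$.

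\textbf{Step 2: reduce the contact measure to exactly $1$.} Take a compactly supported $W$ with $|\{W=Ax\}|=:\mu>1$. We want to perturb $W$ so that the contact set shrinks to measure exactly $1$ at small energy cost. Rescaling with $\lambda<1$ would do this, but it is hard to see why it would not increase the energy. Instead, consider
\begin{equation}
W_t:=W+t\psi,
\end{equation}
where $\psi\in C_c^\infty$ is nonvanishing on a set chosen as follows. The map $E\mapsto |\{W=Ax\}\setminus E|$ is continuous under enlarging balls: take $\psi=\rho\,e_1$ with $\rho>0$ on $B_s(x_1)$, and decrease contact only there. The choice is to move the contact points off by $t\,e_1$, which makes $\{W_t=Ax\}=\{W=Ax\}\setminus B_s(x_1)$ up to null sets for small $t>0$. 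Indeed, on $\{W=Ax\}\cap B_s$ we have $W_t-Ax=t\rho e_1\neq0$. Off the contact set, the equality $W+t\rho e_1=Ax$ can occur only on the level set $\{(W-Ax)/\rho=-te_1\}$, and these sets are disjoint for distinct $t$, so all but countably many $t$ give null sets. Choose $x_1,s$ by continuity in $s$ so that the removed measure is exactly $\mu-1$. Letting $t\to0$ along good values gives energy $\to\int|\nabla W|^2$. Since $\psi$ is compactly supported, compact support is preserved.

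Combining the two steps yields $I_4\le I_2+C\delta$ for every $\delta>0$, which closes the chain of inequalities. The main obstacle is Step 1: obtaining the cutoff estimate, in particular showing that the limit constant at infinity is $0$ for $d\ge3$, and handling the logarithmic cutoff for $d\le2$.
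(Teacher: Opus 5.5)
Your reduction to $I_4\le I_2$ is correct, and so is Step 2, but Step 2 is unnecessary. The dilation $W\mapsto\lambda W(\cdot/\lambda)$ with $\lambda=\mu^{-1/d}<1$ multiplies the energy by $\lambda^d=1/\mu<1$, which is the scaling you computed yourself. It makes the contact measure exactly $1$ and preserves compact support. This is exactly how the paper gets $I_1=I_2$.

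The genuine gap is Step 1, which carries all the content of $I_3\le I_2$ and which you leave open. For $d\ge3$ your cutoff needs $W\to0$ at infinity, and the heuristic you offer for this is false. A contact set of finite measure says nothing about $W$ at infinity, even when $\ker A=\{0\}$. Take $A=\mathrm{Id}$, $c\neq0$, and $\eta\in C^\infty_c(\R^d)$ with $0\le\eta\le1$ and $\eta=1$ exactly on $\overline{B_r}$, where $|B_r|=1$. Then $W(x)=c+(x-c)\eta(x-c)$ satisfies $\{W(x)=x\}=\overline{B_r(c)}$, and $W\equiv c$ outside a compact set. For such $W$ and any compactly supported $\varphi_R$ equal to $1$ on $B_R$ (with $R$ large), one has $\int|\nabla(\varphi_R W)|^2\ge|c|^2\,\mathrm{Cap}(B_R)\to\infty$. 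So no cutoff of $W$ itself can work. In particular your logarithmic fallback fails for $d\ge3$, since there $\int|\nabla\varphi_R|^2$ is of order $R^{2(d-2)}/\log^2R$. The paper passes over this point by applying Hardy's inequality to an arbitrary $W\in D^{1,2}(\R^d,\R^k)$. That is legitimate only for $W$ vanishing at infinity, so you were right to flag it, but you have not resolved it.

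The missing idea for $d\ge3$ is to normalize $W$ before cutting off.
Let $c$ be the constant with $W-c\in L^{2^*}(\R^d)$, and let $P$ be the orthogonal projection of $\R^k$ onto $\mathrm{Range}(A)$.
Since $PAx=Ax$, we get $|\nabla(PW)|\le|\nabla W|$ and $\{PW=Ax\}\supseteq\{W=Ax\}$.
Write $Pc=Ax_0$ and set $\widetilde W(x):=PW(x+x_0)-Ax_0$. Then:
\begin{itemize}
\item $\widetilde W$ has the same energy as $PW$;
\item its contact set is $\{PW=Ax\}-x_0$, of measure $\ge1$;
\item $\widetilde W\in L^{2^*}(\R^d)$.
\end{itemize}
Your H\"older estimate (equivalently, the paper's Hardy argument) then applies to $\varphi_R\widetilde W$, and your pre-dilation absorbs the lost contact measure.

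For $d=2$ no normalization is needed. You must cut off $W$ itself with the logarithmic $\varphi_R$, not $W-c_R$, which destroys the contact set as you noted. You then have to prove $(\log R)^{-2}\int_{B_{R^2}\setminus B_R}|W|^2|x|^{-2}\,dx\to0$. By Poincar\'e on the dyadic annuli, the means satisfy $|m_j|^2\le 2|m_{j_0}|^2+Cj\,\|\nabla W\|^2_{L^2(\R^2\setminus B_{2^{j_0}})}$. Only $O(\log R)$ annuli meet $B_{R^2}\setminus B_R$, so the $\limsup$ as $R\to\infty$ is at most $C\|\nabla W\|^2_{L^2(\R^2\setminus B_{2^{j_0}})}$ for every $j_0$, hence zero. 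The paper encodes this step in a two-dimensional Hardy-type inequality. Without these two ingredients your proposal does not yet prove $I_3\le I_2$.
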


\begin{proof}
The scaling $W_\delta:=\frac{1}{\delta^\frac{1}{d}}W(\delta^\frac{1}{d}x)$ for any $\delta \ge 1$ yields 
\begin{multline}
\inf\left\{\int_{\R^d} |\nabla W|^2\, dx:W\in D^{1,2}(\R^d, \R^k),  |\{W(x)= Ax\}|\ge1\right\}\\
=\inf_{\delta\ge 1}\inf\left\{\int_{\R^d} |\nabla W|^2\, dx:W\in D^{1,2}(\R^d, \R^k),  |\{W(x)= Ax\}|=\delta\right\}\\
=\inf_{\delta\ge 1}\inf\left\{\delta\int_{\R^d} |\nabla W|^2\, dx:W\in D^{1,2}(\R^d, \R^k),  |\{W(x)= Ax\}|=1\right\}\\
=\inf\left\{\int_{\R^d} |\nabla W|^2\, dx:W\in D^{1,2}(\R^d, \R^k),  |\{W(x)= Ax\}|=1\right\}.
\end{multline}
To prove the second equality in \eqref{eq_equiv_ge}, it is enough to show that 
\begin{multline}
\inf\left\{\int_{\R^d} |\nabla W|^2\, dx:W\in D^{1,2}(\R^d, \R^k),  |\{W(x)= Ax\}|=1\right\}\\
\le \inf\left\{\int_{\R^d} |\nabla W|^2\, dx:W\in D^{1,2}(\R^d, \R^k),  |\{W(x)= Ax\}|\ge1, \mathop{\rm supp}(W) \text{ is compact}\right\},
\end{multline}
since the reverse inequality is obvious. Let $W$ be a comparator for  the first problem in \eqref{eq_equiv_ge}.
If $d \ge 3$ let us consider  cut off functions
$\eta_R \in C^\infty_c(\R^d)$ such that $\eta_R  =1$ in $B_R$, $\eta_R=0$ on $\R^{d}\setminus B_{2R}$ and $|\nabla \eta|\le \frac{2}{R}$. If  $d =2 $ instead we
define
\begin{equation}
\eta_R(x):=
\begin{cases}
1, &\text{ if } x \in B_R,\\
\frac{\log(R^2)-\log(|x|)}{\log(R^2)-\log(R)}, &\text{ if }x \in B_{R^2}\setminus B_R,\\
0, &\text{ if }x \in \R^d\setminus B_{R^2}.
\end{cases}
\end{equation}
Letting $\delta_R:=|\{W(x)= Ax\} \cap B_R|$ and  choosing $R$ large enough so that $ \delta_R \neq 0$,
\begin{equation}
W_R(x):= \frac{1}{\delta_R^{1/d}}\eta_R( \delta_R^{{1/d}} x)W(\delta_R x),
\end{equation}
is a comparator for the third minimization problem in  \eqref{eq_equiv_ge}. A change of variables yields
\begin{multline}\label{proof_prop_equiv_ge_1}
\int_{\R^d}  |\nabla W_R |^2 \, dx \le \delta_R \int_{\R^d}  |\nabla W |^2 \eta_R^2\, dx\\
+2\delta_R\sum_{i=1}^{k}\int_{\R^d} \eta_R w_i \nabla w_i\cdot \nabla \eta_R \, dx 
+\delta_R \int_{\R^d}  | W |^2 |\nabla \eta_R|^2\, dx.
\end{multline}
Furthermore, if $d \ge 3$, 
\begin{equation}
\int_{\R^d}  | W |^2 |\nabla \eta_R|^2\, dx \le 2\int_{\R^d\setminus B_{R}}  \frac{| W |^2}{|x|^2}\, dx 
\end{equation}
 and, by the Hardy inequality, 
\begin{equation}
\int_{\R^d}  \frac{| W |^2}{|x|^2}\, dx <+\infty
\end{equation}
for any $W \in D^{1,2}(\R^d,\R^k)$.
If $d =2$, 
\begin{equation}
\int_{\R^d}  | W |^2 |\nabla \eta_R|^2\, dx \le \int_{B_{R^2}\setminus B_R}  \frac{| W |^2}{|\log(R)|^2|x|^2}\, dx.
\end{equation}
Furthermore, for any $x \in B_{R^2}\setminus B_R$ and for some constant $C>0$ that does not depend on $R$
\begin{equation}
\frac{1}{|\log(R)|^2} \le   \frac{C}{1+|\log(x)|^2}.
\end{equation}
and we also have the following Hardy-type inequality 
\begin{equation}\label{ineq_Hardy_d=2}
 \int_{\R^2}  \frac{| W |^2}{(1+|\log(|x|)|^2)|x|^2}\, dx\le 4\int_{\R^2}  | \nabla W |^2 \, dx
\end{equation}
for any $W \in D^{1,2}(\R^d,\R^k)$.
In conclusion, since $\delta_R \to 1$ as $R \to +\infty$, passing to the limit as $R\to +\infty$ in \eqref{proof_prop_equiv_ge_1},
\begin{equation}
\lim_{R \to \infty}\int_{\R^d}  |\nabla W_R |^2 \, dx \le \int_{\R^d}  |\nabla W |^2\, dx.
\end{equation}
Hence, we have proved the third equality in \eqref{eq_equiv_ge}. Finally,
\begin{multline}
\inf\left\{\int_{\R^d} |\nabla W|^2\, dx:W\in D^{1,2}(\R^d, \R^k),  |\{W(x)= Ax\}|\ge1, \mathop{\rm supp}(W) \text{ is compact}\right\}\\
=\inf_{R>0}\inf\left\{\int_{\R^d} |\nabla W|^2\, dx:  |\{W(x)= Ax\}|\ge1, W \in H_0^1(B_R,\R^k)\right\}\\
=\inf_{R>0}\inf\left\{\int_{\R^d} |\nabla W|^2\, dx: |\{W(x)= Ax\}|=1, W \in H_0^1(B_R,\R^k)\right\}\\
=\inf\left\{\int_{\R^d} |\nabla W|^2\, dx:W\in D^{1,2}(\R^d, \R^k),  |\{W(x)= Ax\}|=1, \mathop{\rm supp}(W) \text{ is compact}\right\},
\end{multline}
where the third  equality is a consequence of Proposition \ref{prop_minim}. Indeed, for any $R>0$, by harmonicity of $Ax$, letting $U(x):=Ax-W(x)$
\begin{multline}
\inf\left\{\int_{\R^d} |\nabla W|^2\, dx: |\{W(x)= Ax\}|\ge1, W \in H_0^1(B_R,\R^k)\right\}\\
=\inf\left\{\int_{B_R} |\nabla U|^2\, dx: |\{U=0\}|\ge 1, U-A \in H_0^1(B_R,\R^k)\right\}-\int_{B_R} |\nabla Ax|^2\, dx\\
=\inf\left\{\int_{B_r} |\nabla U|^2\, dx: |\Omega_U|\le |B_R|-1, U-A \in H_0^1(B_R,\R^k)\right\}-\int_{B_R} |\nabla Ax|^2\, dx\\
=\inf\left\{\int_{B_r} |\nabla U|^2\, dx: |\Omega_U|= |B_R|-1, U-A \in H_0^1(B_R,\R^k)\right\}-\int_{B_R} |\nabla Ax|^2\, dx\\
=\inf\left\{\int_{\R^d} |\nabla W|^2\, dx: |\{W(x)= Ax\}|=1, W \in H_0^1(B_R,\R^k)\right\},
\end{multline}
as previously observed.
\end{proof}

We are going to show that, under assumption \eqref{hp_h}, the limit as $\e \to 0^+$ of \eqref{prob_min_tildeVe} is \eqref{prob_min_tildeV0} and, 
that \eqref{prob_min_tildeV0} admits of minimizers as limits of sequences $\{\widetilde V_{\e_j}\}_{j \in\mb{N}\setminus\{0\}}$ with $\e_j \to 0^+$ as $j \to\infty$. 
We need two technical lemmas.

\begin{lemma}\label{lemma_psij}
Let $f_j\in H_{loc}^1(\R^d,\R^k)$,  and suppose that $f_j \to f_\infty $ strongly in  $H_{loc}^1(\R^d,\R^k)$ for some $f_\infty \in H_{loc}^1(\R^d,\R^k)$. Let $R>0$, $K\subset B_R$  be a compact set with  $|K|=1$  and let $\psi_j \in H_0^1(B_R,\R^k)$ be the unique solution of  
\begin{equation}\label{prob_psij}
\begin{cases}
-\Delta \psi_j=0, &\text{ in } B_R\setminus K,\\
\psi_j=f_j, &\text{ on }  K, \\
\psi_j=0, &\text{ on }  \partial B_R,
\end{cases}
\end{equation}
for any $j \in \mb{N}$ or $j=\infty$. Then 
\begin{equation}\label{limit_psij}
\lim_{j \to \infty}\int_{B_R} |\nabla \psi_j|^2 \, dx=\int_{B_R} |\nabla \psi_\infty|^2 \, dx.
\end{equation}
\end{lemma}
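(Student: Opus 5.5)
The argument will use that $\psi_j$ is the minimizer of the Dirichlet energy in the closed affine set
\[
\mathcal{A}_j:=\{\psi\in H_0^1(B_R,\R^k):\ \psi-f_j\in H_0^1(B_R\setminus K,\R^k)\},
\]
where ``$\psi=f_j$ on $K$'' is read quasi-everywhere, i.e.\ $\psi-\varphi f_j\in H^1_0(B_R\setminus K)$ for a fixed cut-off $\varphi\in C^\infty_c(B_R)$ with $\varphi\equiv1$ near $K$. With this reading, the dependence of $\mathcal A_j$ on $j$ is only through a translation. Precisely, $\mathcal A_j=\varphi f_j+H_0^1(B_R\setminus K,\R^k)$, where $\varphi f_j\in H^1_0(B_R)$.

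\textbf{Step 1: reduce to a projection.} Write
\[
\psi_j=\varphi f_j-P(\varphi f_j),
\]
where $P$ is the orthogonal projection of $H^1_0(B_R,\R^k)$, with the inner product $\int\nabla u\cdot\nabla v$, onto the closed subspace $H_0^1(B_R\setminus K,\R^k)$. This identity is exactly the harmonicity of $\psi_j$ in $B_R\setminus K$ together with the boundary conditions. It follows that the map $F\mapsto F-P F$ is linear and $1$-Lipschitz in the Dirichlet norm, and therefore
\[
\norm{\nabla(\psi_j-\psi_\infty)}_{L^2(B_R)}\le\norm{\nabla(\varphi(f_j-f_\infty))}_{L^2(B_R)}.
\]

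\textbf{Step 2: conclude.} Since $f_j\to f_\infty$ in $H^1(B_R)$, the right-hand side is bounded by $C_\varphi\norm{f_j-f_\infty}_{H^1(B_R)}\to0$. Hence $\psi_j\to\psi_\infty$ strongly in $H^1_0(B_R)$, which gives \eqref{limit_psij}, and in fact more.

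\textbf{Main obstacle.} The delicate point is the meaning of the boundary condition on the merely compact set $K$, together with existence and uniqueness. I would settle it by taking the variational definition above. Uniqueness is then immediate: the difference of two solutions lies in $H^1_0(B_R\setminus K)$ and is harmonic there, so it vanishes. One must also check that the solution does not depend on the choice of cut-off $\varphi$. This holds because for two cut-offs, $(\varphi_1-\varphi_2)f_j$ is supported away from $K$ and hence belongs to $H^1_0(B_R\setminus K)$.

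If instead the paper intends the trace a.e.\ on $K$ (for instance, with $K$ having nonempty interior), the same projection argument still applies with the analogous affine space. Alternatively, one can argue via $\Gamma$-convergence: use $\psi_\infty+\varphi(f_j-f_\infty)$ as a competitor to get the upper bound, and weak lower semicontinuity for the lower bound. Both routes rely only on the strong $H^1_{loc}$ convergence.
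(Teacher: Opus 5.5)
Your proposal is correct and is essentially the paper's argument. The paper also fixes a cut-off $\eta\equiv 1$ near $K$, and it uses that $\psi_j-\psi_\infty$, being harmonic in $B_R\setminus K$ with datum $f_j-f_\infty$ on $K$, has no more Dirichlet energy than the competitor $\eta(f_j-f_\infty)$, whose energy tends to zero by the strong $H^1_{loc}$ convergence; your orthogonal-projection formulation is this Dirichlet principle made explicit, with the added (and welcome) care about how the condition on the compact set $K$ is to be understood.
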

\begin{proof}
Let $r_1,r_2>0$ be such that $K \subset B_{r_1} \subset B_{r_2} \subset B_R$. Let $\eta$ be cut off function such that $\eta=1$ in $B_{r_1}$ and $\eta=0$ in $B_R\setminus B_{r_2}$.   By harmonicity of $\psi_j-\psi_\infty$, it follows that
\begin{multline}
\int_{B_R} |\nabla \psi_j-\nabla \psi_\infty|^2 \, dx \le \int_{B_R} |\nabla [\eta (f_j-f_\infty)]|^2 \, dx\\
=\int_{B_R} [\eta^2 |\nabla (f_j-f_\infty)|^2+\eta (f_j-f_\infty) \nabla(f_j-f_\infty)\cdot \nabla \eta+| (f_j-f_\infty)|^2  |\nabla \eta|^2]  \, dx. 
\end{multline}
Since $\eta$ and $\nabla \eta$ are bounded, passing to the limit as $j\to\infty$ we have proved \eqref{limit_psij}.
\end{proof}

Let us set some notation that we are going to use in the next lemma. For any $R>0$ we  define 
\begin{equation}
D^{1,2}(\R^d\setminus B_R, \R^k):=\{W \in H^1(B_{R'}\setminus B_R,\R^k)\text{ for any } R'>0: \nabla W \in L^2(\R^d\setminus B_R,\R^{d,k})\},
\end{equation}
and similarly let
\begin{multline}
D^{1,2}_{0,\partial B_R}(\R^d\setminus B_R, \R^k):=\{W \in H^1(B_{R'}\setminus B_R,\R^k)\text{ for any } R'>0,\\
W=0 \text{ on } \partial B_R: \nabla W \in L^2(\R^d\setminus B_R,\R^{d,k})\}.
\end{multline}
The following lemma is the key result of the whole section and the main ingredient in the existence of minimizers of \eqref{prob_min_tildeV0}. \begin{lemma}\label{lemma_bound_capa}
We have that
\begin{equation}\label{ineq_capa_Ve}
\limsup_{\e \to 0^+}\int_{D_\e}|\nabla \widetilde V_\e|^2 \, dx<+\infty.
\end{equation}
Furthermore, if \eqref{hp_h} holds, there exists a constant $\kappa>0$, that does not depends on $R$, and $R_0>0$,  such that for any $R \ge R_0$  
\begin{equation}\label{limit_meausure_Ve}
\limsup_{\e \to 0^+}|\{\widetilde V_\e= \widetilde h_\e\}\setminus B_R| \le \frac{\kappa}{R^2}.
\end{equation}
\end{lemma}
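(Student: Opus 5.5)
For \eqref{ineq_capa_Ve} I will build, for each small $\e$, an explicit competitor for \eqref{prob_min_tildeVe} whose energy is bounded independently of $\e$. Fix a compact set $K\subset B_{R_1}$ with $|K|=1$, for instance a ball. For $\e$ small, $B_{2R_1}\subset D_\e$ because $0\in D$. Let $\eta$ be a cutoff equal to $1$ on $B_{R_1}$ and supported in $B_{2R_1}$, and take $W_\e:=\eta\,\widetilde h_\e$. Then $W_\e=\widetilde h_\e$ on $K$, so the contact set has measure at least $1$. By scaling, $\widetilde h_\e$ converges to $\nabla h(0)x$ in $C^1(B_{2R_1})$, since $h$ is smooth near $0$. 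Hence $\int|\nabla W_\e|^2$ stays bounded. To get a contact set of measure exactly $1$, I will use the argument of Proposition \ref{prop_equiv_ge}: the infimum with constraint $\ge 1$ equals the infimum with constraint $=1$, which is immediate from Proposition \ref{prop_minim}(ii) applied in $D$ to $h-W$. This proves the first claim. Note that it does not need \eqref{hp_h}, only that $h$ vanishes at $0$. If $h(0)\neq0$ the competitor still works, but its energy grows like $\e^{-2/d}$, which is why the normalization $x_0=0$ matters.

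For \eqref{limit_meausure_Ve} I will use \eqref{hp_h} through a lower bound on $|h|$. Since $\nabla h(0)$ is injective and $h$ vanishes only at $0$, compactness gives some $c>0$ with $|h(x)|\ge c|x|$ on $D_\delta$, and $|h|\ge c_\delta>0$ on $D\setminus D_\delta$ away from $0$ (near $\partial D$ one has to be careful, but the contact set there can be handled because $V_\e\in H^1_0$). Scaling gives $|\widetilde h_\e(x)|\ge c|x|$ on the relevant rescaled region. On the contact set $E_\e\setminus B_R$, where $E_\e:=\{\widetilde V_\e=\widetilde h_\e\}$, we therefore have $|\widetilde V_\e|\ge c|x|\ge cR$. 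Chebyshev then yields
$$|E_\e\setminus B_R|\le \frac{1}{c^2R^2}\int_{E_\e\setminus B_R}\frac{|\widetilde V_\e|^2}{|x|^2}\cdot |x|^2\cdot\frac{1}{|x|^2}\,dx\;\lesssim\;\frac{1}{R^2}\int \frac{|\widetilde V_\e|^2}{|x|^2}\,dx .$$
That is not quite the right power, so I will do it more carefully. On $E_\e\setminus B_R$, $|\widetilde V_\e|^2/|x|^2\ge c^2$. Hardy's inequality ($d\ge3$, with $\widetilde V_\e\in H^1_0(D_\e)$) gives
$$c^2|E_\e\setminus B_R|\le \int_{\R^d\setminus B_R}\frac{|\widetilde V_\e|^2}{|x|^2}\le C\int|\nabla\widetilde V_\e|^2,$$
which is only an $O(1)$ bound. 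To get the $R^{-2}$ decay I will instead use the weight $|x|^{-4}$. On $E_\e\setminus B_R$, $|\widetilde V_\e|^2\ge c^2R^2$, so
$$|E_\e\setminus B_R|\le \frac{1}{c^2R^2}\int_{E_\e\setminus B_R}\frac{|\widetilde V_\e|^2}{|x|^2}\,dx\le \frac{C_d}{c^2R^2}\int|\nabla \widetilde V_\e|^2\,dx,$$
using $|\widetilde V_\e|^2\ge c^2|x|^2$ and $|x|\ge R$. Combined with the first claim, this gives $\kappa=C_d\sup_\e\int|\nabla\widetilde V_\e|^2/c^2$. For $d=2$, Hardy fails and I will instead use the logarithmic Hardy inequality \eqref{ineq_Hardy_d=2}. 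This costs a factor $\log^2R$, which has to be absorbed, perhaps by exploiting $|\widetilde V_\e|\ge c|x|$ more sharply or by using the $L^\infty$ bound $|\widetilde V_\e|\le |\widetilde h_\e|+|\widetilde U_\e|$ together with subharmonicity.

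I expect the main obstacles to be twofold. The first is obtaining the uniform lower bound $|h(x)|\ge c|x|$ up to the boundary of $D$, or equivalently controlling contact points near $\partial D_\e$; there I would use that $|h|\ge c_\delta$ on $D\setminus D_\delta$, so that those points lie at distance $\gtrsim\e^{-1/d}\gg R$ from the origin and satisfy the same inequality. The second is the case $d=2$.
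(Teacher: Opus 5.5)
Your proof of \eqref{ineq_capa_Ve} is correct and matches the paper's: you test with $\eta\widetilde h_\e$, which only needs $h(0)=0$, and passing from $|\{W=\widetilde h_\e\}|\ge 1$ to $=1$ via Proposition \ref{prop_minim}(ii) is fine. The proof of \eqref{limit_meausure_Ve}, however, has a genuine gap, because the key inequality
\[
|E_\e\setminus B_R|\le \frac{1}{c^2R^2}\int_{E_\e\setminus B_R}\frac{|\widetilde V_\e|^2}{|x|^2}\,dx
\]
is false. It would need $|\widetilde V_\e|^2\ge c^2R^2|x|^2$ on $E_\e\setminus B_R$, but you only have $|\widetilde V_\e|^2\ge c^2|x|^2$; the bound $|x|\ge R$ has been used twice. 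For instance, if $|\widetilde V_\e|=c|x|$ on $E_\e$, the right-hand side is $R^{-2}|E_\e\setminus B_R|$. What Chebyshev and Hardy actually give is $c^2|E_\e\setminus B_R|\le\int_{\R^d\setminus B_R}|\widetilde V_\e|^2|x|^{-2}\,dx\le C\int|\nabla\widetilde V_\e|^2\,dx$, which is only $O(1)$. The weight $|x|^{-2}$ exactly cancels the growth $|\widetilde h_\e(x)|\sim|x|$. So no weighted-$L^2$ estimate of this kind, nor the logarithmic one for $d=2$, can stop a fixed fraction of the contact set from escaping to infinity, which is exactly what the lemma must exclude. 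Asking for the tail $\int_{\R^d\setminus B_R}|\widetilde V_\e|^2|x|^{-2}$ to be uniformly small is the tightness statement itself.

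The missing ingredient is an estimate in which the lower bound $|\widetilde V_\e|\ge cR$ enters with an exponent larger than $2$, namely Sobolev or capacity. For $d\ge 3$ this is immediate in your setting. Extend $\widetilde V_\e$ by zero; then Chebyshev and the Sobolev inequality give
\[
(cR)^{\frac{2d}{d-2}}\,|E_\e\setminus B_R|\le\int_{\R^d}|\widetilde V_\e|^{\frac{2d}{d-2}}\,dx\le S_d\Big(\int_{\R^d}|\nabla\widetilde V_\e|^2\,dx\Big)^{\frac{d}{d-2}}.
\]
Hence $|E_\e\setminus B_R|\le CR^{-\frac{2d}{d-2}}\le CR^{-2}$ for $R\ge 1$, uniformly in small $\e$ by \eqref{ineq_capa_Ve}. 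The paper proves the equivalent capacitary version:
\begin{enumerate}
\item It subtracts the exterior harmonic extension $\Phi_{R,\e}$ of the trace of $\widetilde V_\e$ on $\partial B_R$.
\item It uses the trace and Hardy inequalities to get $|\Phi_{R,\e}|\le\delta R$ outside $B_{2R}$; this is the only place Hardy enters.
\item Hence $|\widetilde V_\e-\Phi_{R,\e}|\ge CR$ on $E_\e\setminus B_{2R}$.
\item It concludes with $\int|\nabla\widetilde V_\e|^2\ge CR^2\,\capa{}{E_\e\setminus B_{2R}}\ge CR^2|E_\e\setminus B_{2R}|^{(d-2)/d}$.
\end{enumerate}

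Separately, your claim that $|h|\ge c_\delta>0$ on $D\setminus D_\delta$ does not follow from \eqref{hp_h}, which says nothing about $h$ near $\partial D$. The paper tacitly uses the same bound, and it holds automatically for $h=Ax$.
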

\begin{proof}
Let $\eta \in C_c^\infty(\R^d)$ be a cut off function such that $\eta=1$ on 
$B_{r}$ with $r:=(1/\omega_d)^\frac{1}{d}$, $\eta <1$ in $B_{2r}\setminus B_r$ and $\eta=0$ in $\R^d\setminus B_{2r}$. Let $\e>0$ be  such that $B_{2r} \subset D_\e$. Testing \eqref{prob_min_tildeVe} with  
$\varphi:=\eta h_\e$ we obtain, letting $r_\e:=r\e^{\frac{1}{d}}$,
\begin{multline}
\int_{D_\e}|\nabla \widetilde V_\e|^2 \, dx \le \int_{B_{2r}}|\nabla (\eta h_\e)|^2 \, dx \le 
\norm{\eta}^2_{C^1(\R^d)}\int_{B_{2r}}[|h_\e|^2 +|\nabla  h_\e|^2] \, dx \\
\le  \norm{\eta}^2_{C^1(\R^d)}\e^{-1} \left(\e^{-\frac{2}{d}}\int_{B_{2r_\e}}|h|^2 \, dx+\int_{B_{2r_\e}}|\nabla h|^2 \, dx\right) 
\le C  \norm{\eta}^2_{C^1(\R^d)} \norm{\nabla h}^2_{L^\infty(B_{2r},\R^{k,d})},
\end{multline}
for some dimensional constant $C>0$, thanks to \eqref{hp_h}.

Next we prove  \eqref{limit_meausure_Ve}. Let $R>0$ and  $\e>0$ small enough so that $B_R \subset D_\e$. Let us consider the harmonic extension $\Phi_{\e,R}$ of $\widetilde V_\e$ from $\partial B_R$ in $\R^d\setminus B_R$, that is, 
\begin{equation}
\Phi_{R,\e}(x)=\frac{|x|^2-R^2}{d \omega_d R} \int_{\partial B_R} \frac{\widetilde V_\e(\xi)}{|\xi-x|^d} \, d\mc{H}^{d-1}_\xi \quad 
\text{ for any } x \in \R^d\setminus \overline{B_R}.
\end{equation}
Since $|\nabla W| \ge|\nabla |W||$  and $\Phi_{R,\e}$ is harmonic with Sobolev trace $\widetilde V_\e$ on $\partial B_R$, we have 
\begin{multline}\label{proof_lemma_bound_capa_1}
\int_{\R^d}|\nabla  \widetilde V_\e|^2 \, dx \ge \int_{\R^d\setminus B_R}|\nabla \widetilde V_\e|^2 \, dx\\
\ge \inf_{W \in D^{1,2}(\R^d\setminus B_R,\R^k)}\left\{\int_{\R^d\setminus B_R}|\nabla W|^2 \, dx
:W=\widetilde h_\e \text{ on } \{\widetilde V_\e=\widetilde h_\e\}\setminus B_R \text{ and } W=\widetilde V_\e \text{ on } \partial B_R \right\} \\
\ge \inf_{W \in D^{1,2}_{0,\partial B_R}(\R^d\setminus B_R,\R^k)}\left\{\int_{\R^d\setminus B_R}|\nabla W|^2 +|\nabla \Phi_{\e,R}|^2 \, dx
:W= \widetilde h_\e-\Phi_{R,\e} \text{ on } \{\widetilde V_\e=\widetilde h_\e\}\setminus B_R\right\} \\
\ge \inf_{w \in D^{1,2}_{0,\partial B_R}(\R^d\setminus B_R)}\left\{\int_{\R^d\setminus B_R}|\nabla w|^2\, dx
:w=|\widetilde h_\e-\Phi_{R,\e}| \text{ on } \{\widetilde V_\e=\widetilde h_\e\}\setminus B_R  \right\}\\
\ge \inf_{w \in D^{1,2}_{0,\partial B_R}(\R^d\setminus B_R)}\left\{\int_{\R^d\setminus B_R}|\nabla w|^2 \, dx
:w=|\widetilde h_\e-\Phi_{R,\e}| \text{ on } \{\widetilde V_\e=\widetilde h_\e\}\setminus B_{2R}  \right\}\\
\ge \inf_{x \in \R^d\setminus B_{2R}}(|\widetilde h_\e|-|\Phi_{R,\e}|)^2
\capa{}{\{\widetilde V_\e=\widetilde h_\e\}\setminus B_{2R}},
\end{multline}
where $\capa{}{\cdot}$ denotes the classical Sobolev capacity in $\R^d$.

Now  we estimate $|\tilde{h}_\e|$ from below  and  $|\Phi_{R,\e}|$ from above on $\R^d \setminus B_{2R}$.
Let $x_\e$ be a sequence of minimizer for $\inf_{x \in  \R^d\setminus B_{2R}}|\widetilde h_\e(x)|$. Since $h(x)\neq 0$ if $x \neq 0$, we must have  $\e^{\frac{1}{d}}|x_\e|\to 0^+$, otherwise $h_{\e}(x_\e)$ is not bounded.  A Taylor expansion  around $0$ yields
\begin{equation}
h_{\e}(x_\e)=\e^{-\frac{1}{d}}h(\e^{\frac{1}{d}}x_\e)=\nabla h(0)x_\e +o(|x_\e|), \quad \text{ as } \e \to 0^+.
\end{equation}
It follows that
\begin{equation}
\liminf_{\e \to 0^+}|h_{\e}(x_\e)| \ge |x_\e|\min_{x \in \partial B_1}|\nabla h(0)x| \quad \text{ as } \e \to 0^+
\end{equation}
thus $\{x_\e\}$ is bounded and, up to passing to a subsequence, $x_\e \to x_0$ for some $x_0 \in \R^d\setminus B_{2R}$.
Since $\widetilde h_\e$ are  equi-Lipschitz,
\begin{equation}\label{proof_lemma_bound_capa_2}
\lim_{\e \to 0^+ }|h_{\e}(x_\e)| = |\nabla h(0)x_0| \ge |x_0| \min_{x \in \partial B_1} |\nabla h(0)x|\ge 2R \min_{x \in \partial B_1} |\nabla h(0)x|.
\end{equation}
On the other hand for any $x \in \R^d \setminus {B_{2R}}$
\begin{equation}
|\Phi_{R,\e}(x)|\le \frac{|x|^2-R^2}{d \omega_d R} \int_{\partial B_R} \frac{|\widetilde V_\e(\xi)|}{|\xi-x|^d} \, d\mc{H}^{d-1}_\xi \le\frac{|x|^2-R^2}{d\omega_dR(|x|-R)^d}
\int_{\partial B_R}|\widetilde V_\e(\xi)|\, d\mc{H}^{d-1}_\xi.
\end{equation}
Furthermore, letting $t:=|x|/R$,
\begin{equation}
\frac{|x|^2-R^2}{R(|x|-R)^d}= \frac{t^2-1}{R^{d-1}(t-1)^d},
\end{equation}
thus, since  $t \to \frac{t^2-1}{(t-1)^d}$ is bounded in $[2,+\infty)$, it follow that there exists a dimensional constant $C>0$ such that 
\begin{equation}\label{proof_lemma_bound_capa_3}
|\Phi_{R,\e}(x)| \le\frac{C}{R^{d-1}}\int_{\partial B_R}|\widetilde V_\e(\xi)|\, d\mc{H}^{d-1}_\xi \quad \text{ for any }x \in \R^d \setminus {B_{2R}}.
\end{equation}
Let us show that for any $\delta>0$ there exists $R_0>0$ such that for any $R \ge R_0$ and any $\e>0$,
\begin{equation}\label{proof_lemma_bound_capa_4}
\frac{1}{R^{d-1}}\int_{\partial B_R}|\widetilde V_\e(\xi)|\, d\mc{H}^{d-1}_\xi \le \delta R.
\end{equation}
By the H\"older inequality,  the classical Sobolev trace inequality on $B_1$ and a scaling argument 
\begin{multline}
\frac{1}{R^d}\int_{\partial B_R}|\tilde{V}|\, d\mc{H}^{d-1} \le \frac{\sqrt{d \omega_d}}{R^{(d+1)/2}}
\left(\int_{\partial B_R}|\widetilde V_\e|\, d\mc{H}^{d-1}\right)^{\frac{1}{2}}     \\
\le \frac{C_2}{R^{(d+1)/2}}\left(\int_{B_R}\frac{1}{R}|\widetilde V_\e|^2+R|\nabla\widetilde V_\e|^2\, dx\right)^{\frac{1}{2}},
\end{multline}
for some dimensional constant $C_2>0$.
Furthermore, if $d \ge 3$ by the classical Hardy inequality 
\begin{equation}
\frac{1}{R^{d+2}}\int_{ B_R}|\widetilde V_\e|^2\, dx \le \frac{1}{R^{d}}\int_{ B_R}\frac{|\widetilde V_\e|^2}{|x|^2}\, dx
\le\frac{C_3}{R^{d}}\int_{ \R^d}|\nabla \widetilde V_\e|^2 \, dx,
\end{equation}
for some dimensional constant $C_3>0$. If $d=2$ by \eqref{ineq_Hardy_d=2}
\begin{equation}
\frac{1}{R^{4}}\int_{ B_R}|\widetilde V_\e|^2\, dx \le \frac{1+|\log(R)|^2}{R^2}\int_{ B_R}\frac{|\widetilde V_\e|^2}{(1+|\log(|x|)|^2)|x|^2}\, dx
\le \frac{1+|\log(R)|^2}{4R^2}\int_{ \R^d}|\nabla \widetilde V_\e|^2 \, dx.
\end{equation}
In view of \eqref{ineq_capa_Ve}, we have proved \eqref{proof_lemma_bound_capa_4} in any dimension $d\ge2$.

Putting \eqref{proof_lemma_bound_capa_2}, \eqref{proof_lemma_bound_capa_3} and \eqref{proof_lemma_bound_capa_4} together we have shown that, if $R$ is large enough,
\begin{equation}
\lim_{\e\to 0^+}|\widetilde h_\e|-|\Phi_{R,\e}| \ge C_5 R \quad  \text{ on }\{\widetilde V_\e=\widetilde h_\e\}\setminus B_{2R},
\end{equation}
for some  constant $C_5>0$ that does not depend on $\e$ nor $R$. It follows that, by \eqref{proof_lemma_bound_capa_1}, and the classical  estimates between Sobolev capacity and Lebesgue measure, see \cite[Theorem 4.15]{EG_book}, 
\begin{equation}
\limsup_{\e \to 0^+}\int_{\R^d}|\nabla \widetilde V_\e|^2 \, dx 
\ge C_5 R ^2 \limsup_{\e \to 0^+} |\{\widetilde V_\e=\widetilde h_\e\}\setminus B_R|^{\frac{d-2}{d}},
\end{equation}
so that we have proved \eqref{limit_meausure_Ve} if $d \ge 3$. If $d=2$ we can finish the proof in a similar way.

\end{proof}

We are now in position to prove one of the main results of this section.
\begin{proposition}\label{prop_tildeVe_tildeV0}
Suppose that \eqref{hp_h} holds. Then any accumulation point in the weak topology of $D^{1,2}(\R^d, \R^k)$ of $\{\widetilde V_\e\}$ is a minimizer of 
problem \eqref{prob_min_tildeV0}. Letting $\widetilde V_0$ be such a minimizer for a sequence  $\{\widetilde V_{\e_j}\}_{j\in \mb{N}}$, we also have 
\begin{equation}\label{limit_Ve_nabla}
\widetilde V_{\e_j} \to \widetilde V_0, \quad \text{ in  } D^{1,2}(\R^d, \R^k) \text{ and in } L_{loc}^2(\R^d, \R^k) \text{ strongly.}
\end{equation}
In particular,  
\begin{equation}\label{limit_Ve}
\lim_{\e \to 0^+}\frac{1}{\e}\int_D |\nabla V_\e|^2 \, dx =\lim_{\e \to 0^+}\int_{D_\e} |\nabla \widetilde V_\e|^2 \, dx=\int_{\R^d} |\nabla \widetilde V_0|^2 \, dx.
\end{equation}
\end{proposition}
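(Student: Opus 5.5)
The plan is the standard $\Gamma$-convergence scheme: a uniform bound gives compactness, Lemma~\ref{lemma_bound_capa} gives tightness of the contact sets, a recovery sequence gives the matching upper bound, and the two together upgrade weak to strong convergence. Throughout, write $A:=\nabla h(0)$ and let $\mathcal{I}$ denote the infimum in \eqref{prob_min_tildeV0}.

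\textbf{Compactness and lower bound.} By \eqref{ineq_capa_Ve}, $\{\widetilde V_\e\}$ is bounded in $D^{1,2}(\R^d,\R^k)$, since the $L^2(B_1)$ part is controlled by the Hardy inequalities used in Lemma~\ref{lemma_bound_capa}. Take a weakly converging subsequence $\widetilde V_{\e_j}\rightharpoonup \widetilde V_0$. By Rellich, $\widetilde V_{\e_j}\to\widetilde V_0$ strongly in $L^2_{loc}$, and up to a further subsequence also a.e.

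Since $h$ is $C^1$ near $0$, $\widetilde h_\e\to Ax$ locally uniformly (indeed in $C^1_{loc}$). We need the contact set $E_0:=\{\widetilde V_0=Ax\}$ to satisfy $|E_0|\ge1$. Set $E_j:=\{\widetilde V_{\e_j}=\widetilde h_{\e_j}\}$, so $|E_j|=1$. By \eqref{limit_meausure_Ve}, $|E_j\cap B_R|\ge 1-\kappa R^{-2}-o(1)$. A.e. convergence gives $\limsup_j \chi_{E_j}\le\chi_{E_0}$ a.e., because on $\limsup E_j$ we have $\widetilde V_0=\lim \widetilde V_{\e_j}=\lim\widetilde h_{\e_j}=Ax$. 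Fatou (reverse) on $B_R$ then yields $|E_0\cap B_R|\ge\limsup_j|E_j\cap B_R|\ge 1-\kappa R^{-2}$. Letting $R\to\infty$ gives $|E_0|\ge1$. Weak lower semicontinuity gives $\int|\nabla\widetilde V_0|^2\le\liminf_j\int|\nabla \widetilde V_{\e_j}|^2$, so by Proposition~\ref{prop_equiv_ge} $\widetilde V_0$ is admissible for the ``$\ge1$'' version and $\int|\nabla \widetilde V_0|^2\ge \mathcal{I}$.

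\textbf{Upper bound (recovery sequence).} This is where I expect the main obstacle: comparing with compactly supported competitors when the obstacle $\widetilde h_\e$ only approximates $Ax$. By Proposition~\ref{prop_equiv_ge}, for $\delta>0$ pick $W\in H^1_0(B_R,\R^k)$ with $|\{W=Ax\}|=1$ and $\int|\nabla W|^2\le \mathcal{I}+\delta$. By inner regularity take a compact $K\subset\{W=Ax\}\cap B_R$ with $|K|\ge1-\delta$. Let $\psi_\infty,\psi_j$ solve \eqref{prob_psij} with data $f_\infty=Ax$ and $f_j=\widetilde h_{\e_j}$. Here $f_j\to f_\infty$ in $H^1_{loc}$ because of the $C^1_{loc}$ convergence. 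Then:
\begin{itemize}
\item $\int|\nabla\psi_\infty|^2\le\int|\nabla W|^2$ by Dirichlet's principle;
\item Lemma~\ref{lemma_psij} gives $\int|\nabla\psi_j|^2\to\int|\nabla\psi_\infty|^2$;
\item each $\psi_j$ extended by $0$ lies in $H^1_0(D_{\e_j})$ with contact set $\supset K$.
\end{itemize}
Rescale by $\lambda=|K|^{-1/d}\le(1-\delta)^{-1/d}$ to restore measure $\ge1$; this multiplies the energy by $|K|^{-1}$. The main subtlety is that rescaling $\psi_j$ changes the obstacle from $\widetilde h_{\e_j}$ to $\widetilde h_{\e_j\lambda^{-d}}$. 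To avoid this, I would instead define $\psi_j$ with datum $f_j(x)=\lambda\widetilde h_{\e_j}(x/\lambda)$ on $\lambda K$, which still converges to $Ax$. Note that the relaxed constraint $\ge1$ is allowed for \eqref{prob_min_tildeVe} by Proposition~\ref{prop_minim}(ii) applied after undoing the substitution $U=h-V$. Hence
\[
\limsup_j\int|\nabla\widetilde V_{\e_j}|^2\le (1-\delta)^{-1}(\mathcal{I}+\delta),
\]
and letting $\delta\to0$ gives $\limsup\le \mathcal{I}$.

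\textbf{Conclusion.} Combining the two bounds, $\int|\nabla\widetilde V_0|^2=\mathcal I=\lim_j\int|\nabla\widetilde V_{\e_j}|^2$. So $\widetilde V_0$ is a minimizer, and its contact set has measure exactly $1$ after using the scaling in Proposition~\ref{prop_equiv_ge}: a contact set of measure $>1$ would allow strictly lower energy if $\mathcal I>0$, while $\mathcal I>0$ follows from the capacity–measure estimate. Convergence of norms plus weak convergence gives strong convergence of gradients, and together with the $L^2_{loc}$ convergence this is \eqref{limit_Ve_nabla}. Since every subsequence has a further subsequence with the same limit value $\mathcal I$, the full limit \eqref{limit_Ve} exists; the first equality there is the scaling identity already noted.
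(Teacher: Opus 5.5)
Your proposal follows the paper's proof essentially step for step. The steps are the bound \eqref{ineq_capa_Ve} with weak compactness, the pointwise comparison of contact sets plus (reverse) Fatou and the tightness \eqref{limit_meausure_Ve} to get $|\{\widetilde V_0=Ax\}|\ge 1$, a recovery sequence from Lemma~\ref{lemma_psij}, and the upgrade from convergence of norms to strong convergence. In places you are more careful than the paper. The paper takes $K=\overline{\{W=\nabla h(0)x\}}$, implicitly assuming this closure still has measure $1$ and lies in the contact set. It infers minimality for \eqref{prob_min_tildeV0} from $|\{\widetilde V_0=Ax\}|\ge 1$ by a bare appeal to Proposition~\ref{prop_equiv_ge}. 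It also argues only along a subsequence for \eqref{limit_Ve}. Your inner-regularity choice of $K$, the scaling argument giving contact measure exactly $1$, and the subsequence-of-subsequence argument fill these in.

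Two details need repair.

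First, the datum $f_j(x)=\lambda\widetilde h_{\e_j}(x/\lambda)$ on $\lambda K$ is exactly $\widetilde h_{\e_j\lambda^{-d}}$. So the resulting $\psi_j$ is not admissible for \eqref{prob_min_tildeVe} at level $\e_j$, which is precisely what you wanted to avoid. Instead, impose $\psi_j=\widetilde h_{\e_j}$ on $\lambda K$ inside $B_{\lambda R}$, and rescale only the limit problem. Since $\lambda A(x/\lambda)=Ax$, the limit with datum $Ax$ on $\lambda K$ is $\lambda\psi_\infty(x/\lambda)$, whose energy is $|K|^{-1}\int|\nabla\psi_\infty|^2$. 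Lemma~\ref{lemma_psij} then applies directly to the compact set $\lambda K$, which has measure $1$.

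Second, in $d=2$ the capacity--measure estimate gives nothing. However, $\mathcal I>0$ holds in every dimension: $\nabla W=A$ a.e. on $\{W=Ax\}$, hence $\int_{\R^d}|\nabla W|^2\ge\norm{A}^2\,|\{W=Ax\}|$. Likewise, in $d=2$ do not rely on \eqref{ineq_Hardy_d=2} for the $L^2(B_1)$ bound. That inequality fails for $W\equiv 1$ and for logarithmic cut-offs of $1$. Use instead Poincar\'e's inequality on $B_R$, as the paper indicates. By Lemma~\ref{lemma_bound_capa}, $\widetilde V_\e=\widetilde h_\e$ on a subset of $B_R$ of measure at least $1/2$ for $R$ large and $\e$ small, and there $|\widetilde h_\e|\le CR$.
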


\begin{proof}
By the Poincaré inequality and Lemma \ref{lemma_bound_capa},
$\{\widetilde V_\e\}_{\e \in (0,1)}$ is bounded in $D^{1,2}(\R^d, \R^k)$ and in $H_{loc}^1(\R^d,\R^k)$. In particular, there exists a sequence $V_{\e_j} \rightharpoonup  \widetilde{V}_0$  weakly in $D^{1,2}(\R^d, \R^k)$ as $k \to \infty$ and strongly in $L_{loc}^2(\R^d,\R^k)$  for some $ \widetilde{V}_0 \in D^{1,2}(\R^d, \R^k)$  by a diagonal argument.

We are going to  show that  $ \widetilde{V}_0$ minimizes \eqref{prob_min_tildeV0}. To this end, we need to apply Lemma \ref{limit_psij} to the sequence $h_{\e_j}$, thus  let us check that $h_{\e_j} \to  \nabla h(0) x $ in $H_{loc}^1(\R^d,\R^k)$. For any $R>0$, since $h$ is Lipschitz, the Dominated converge theorem yields
\begin{equation}
\lim_{j \to \infty}\int_{B_R}|h_{\e_j}(x)-\nabla h(0)x|^2 \, dx=0.
\end{equation}
Furthermore, $\nabla h_{\e_j}(x)=(\nabla h)(\e_jx) \to \nabla h(0)$ for any $x \in \mathbb\R^d$, hence, by  the Dominated Converge Theorem we also have 
\begin{equation}
\lim_{j \to \infty}\int_{B_R}|\nabla h_{\e_j}(x)-\nabla h(0)|^2 \, dx=0.
\end{equation}
Let $W\in D^{1,2}(\R^d, \R^k)$ be such that  $|\{W= \nabla h(0)x\}|=1$ and the support of $W$ is compact. For any $j \in \mb{N}$ and $j=\infty$, let $\psi_j$ be as in Lemma \ref{lemma_psij} with $K=\overline{\{W(x)= \nabla h(0)x\}}$,  as $f_j$ the function $h_{\e_j}$ and as $f_\infty$ the function $\nabla h(0) x$.
By Lemma \ref{lemma_psij} and harmonicity
\begin{equation}
\liminf_{j \to \infty} \int_{\R^d} |\nabla \widetilde \psi_j|^2 \, dx=\int_{\R^d} |\nabla \widetilde \psi_\infty|^2 \, dx \le \int_{\R^d} |\nabla W|^2 \, dx.
\end{equation}
It follows that, by lower semicontinuity of norms, and minimality of $\widetilde V_{\e_j}$ 
\begin{equation}
\int_{\R^d} |\nabla \widetilde V_0|^2 \, dx \le \liminf_{j \to \infty}\int_{\R^d} |\nabla \widetilde V_{\e_j}|^2 \, dx \le \liminf_{j \to \infty} \int_{\R^d} |\nabla \widetilde \psi_j|^2 \, dx
\le\int_{\R^d} |\nabla W|^2 \, dx.
\end{equation}
Let us show that 
\begin{equation}\label{proof_prop_tildeVe_tildeV0_1}
|\{\widetilde{V}_0(x)=\nabla h(0) x\}| \ge 1,
\end{equation}
so that  $ \widetilde{V}_0$ minimizes \eqref{prob_min_tildeV0} by Proposition \ref{prop_equiv_ge}. 
Clearly for any $R>0$
\begin{align}
&|\{\widetilde{V}_0(x)=\nabla h(0) x\}\cap B_R|=  |B_R|-  |\{\widetilde{V}_0(x)\neq \nabla h(0) x\}\cap B_R|, \\
&|\{\widetilde{V}_{\e_j}=h_{\e_j}\}\cap B_R|=  |B_R|-  |\{\widetilde{V}_{\e_j}(x)\neq h_{\e_j}\}\cap B_R|,
\end{align}
and
\begin{equation}
\chi_{\{\widetilde{V}_0(y)\neq \nabla h(0) y\}}(x) \le \lim_{j \to \infty} \chi_{\{\widetilde{V}_{\e_j}(x)\neq h_{\e_j}\}}(x).
\end{equation}
Indeed if   $\chi_{\{\widetilde V_\e\neq h_\e\}}(x)=0$ for any $k$ big enough than $\widetilde{V}_{\e_j}(x)= h_{\e_j}(x)$ and so, passing to the limit, as $k \to \infty$, we obtain  $\widetilde{V}_0(x)= \nabla h(0)x$, that is, $\chi_{\{\widetilde{V}_0(y)\neq \nabla h(0) y\}}(x)=0$.
Hence, by Fatou's lemma and Lemma \ref{lemma_bound_capa}, for any $R\ge R_0$
\begin{multline}
|\{\widetilde{V}_0(x)=\nabla h(0) x\}\cap B_R| \ge \limsup_{j \to \infty} |\{\widetilde{V}_{\e_j}=h_{\e_j}\}\cap B_R|\\
=1-\liminf_{j \to \infty} |\{\widetilde{V}_{\e_j}=h_{\e_j}\}\setminus  B_R|\ge 1-\frac{\kappa}{R^2}.    
\end{multline}
Passing to the limit as $R\to +\infty$, we obtain \eqref{proof_prop_tildeVe_tildeV0_1}.
By Proposition \ref{prop_equiv_ge}, $\widetilde{V}_0$ is a minimizer of \eqref{prob_min_tildeV0}.

Next we show \eqref{limit_Ve}. Let $\delta >0$ and let $W\in D^{1,2}(\R^d, \R^k)$ be such that  $|\{W= \nabla h(0)x\}|=1$,  the support of $W$ is compact, and 
\begin{equation}
    \inf\left\{\int_{\R^d} |\nabla W|^2\, dx:W\in D^{1,2}(\R^d, \R^k),  |\{W(x)= \nabla h(0)x\}|=1\right\} \ge \int_{\R^d} |\nabla W|^2\, dx -\delta.
\end{equation}
Such a choice is possible for any $\delta>0$ in view of Proposition \ref{prop_equiv_ge}.
As above 
\begin{equation}
 \int_{\R^d} |\nabla W|^2 \, dx \ge \liminf_{j \to \infty}\int_{\R^d} |\nabla \widetilde V_{\e_j}|^2 \, dx 
\end{equation}
so that 
\begin{multline}
\int_{\R^d} |\nabla \widetilde V_0|^2 \, dx=  \inf\left\{\int_{\R^d} |\nabla W|^2\, dx:W\in D^{1,2}(\R^d, \R^k),  |\{W(x)= \nabla h(0)x\}|=1\right\}\\
\ge\int_{\R^d} |\nabla W|^2\, dx -\delta \ge  \liminf_{j \to \infty}\int_{\R^d} |\nabla \widetilde V_{\e_j}|^2 \, dx -\delta.
\end{multline}
Since this inequality holds for any $\delta >0$, we conclude that, by the lower semicontinuity of norms, 
\begin{equation}
\int_{\R^d} |\nabla \widetilde V_0|^2 \, dx= \liminf_{j \to \infty}\int_{\R^d} |\nabla \widetilde V_{\e_j}|^2 \, dx.
\end{equation}
Hence, we have proved \eqref{limit_Ve} which, combined with the weak convergence in $D^{1,2}(\R^d,\R^k)$, also yields \eqref{limit_Ve_nabla}.
\end{proof}

Let us denote with  $U_{\e,\eta}$  a minimizer $U_{m,\eta}$ of \eqref{prob_min_J_meta}  with  $m=|D|-\e$ and similarly let $\Lambda_{\e,\eta}$ be as in \eqref{eq_shape_derivative}. An interesting consequence of the previous proposition are uniform bounds on $\Lambda_{\e,\eta}$ with respect to both $\eta$ and $\e$.
\begin{proposition}\label{prop_La_aeta_unifor}
If \eqref{hp_h} holds, there exists three constants $C_1,C_2>0$ and $\e_0>0$ such that for any $\e \in (0,\e_0)$ there exists $\eta_\e>0$ such that
\begin{equation}\label{ineq_La_aeta_unifor}
C_1\le \Lambda_{\e,\eta_\e} \le C_2.
\end{equation}
\end{proposition}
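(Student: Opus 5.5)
Throughout, for each $\e$ we choose $\eta_\e\in(0,\widetilde\eta_{m}]$ with $m=|D|-\e$, as given by Proposition \ref{prop_La_meta_unifor}. Then $|\Omega_{U_{\e,\eta_\e}}|=|D|-\e$, so $U_{\e,\eta_\e}$ is a minimizer $U_\e$ of \eqref{prob_min_measure}. The constant $\Lambda_{\e,\eta_\e}$ is then the Lagrange multiplier of this constrained minimizer. It satisfies \eqref{eq_shape_derivative}:
\begin{equation}
\delta\mc{F}_0(U_\e)[\xi]=-\Lambda_{\e,\eta_\e}\int_{\Omega_{U_\e}}\dive(\xi)\,dx=\Lambda_{\e,\eta_\e}\int_{D\setminus\Omega_{U_\e}}\dive(\xi)\,dx,
\end{equation}
since $\int_D\dive\xi=0$. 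The idea is to write this identity in terms of $V_\e=h-U_\e$, rescale to $D_\e$, and pass to the limit using Proposition \ref{prop_tildeVe_tildeV0}. In the limit, $\Lambda_{\e,\eta_\e}$ is bounded above and below by a shape derivative of the limit problem \eqref{prob_min_tildeV0} tested against a suitable vector field.

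\textbf{Step 1: rescaling.} Set $\widetilde U_\e(x):=\e^{-1/d}U_\e(\e^{1/d}x)=\widetilde h_\e-\widetilde V_\e$ on $D_\e$. The domain variation is invariant under this scaling, with the scaling of the field carried along. Hence for $\xi\in C^1_c(\R^d,\R^d)$, once $\e$ is small enough that $\operatorname{supp}\xi\subset D_\e$,
\begin{equation}
\delta\mc{F}_0(\widetilde U_\e)[\xi]=\Lambda_{\e,\eta_\e}\int_{\{\widetilde V_\e=\widetilde h_\e\}}\dive(\xi)\,dx.
\end{equation}
The energies and the contact set both scale by a factor $\e^{-1}$, and these cancel. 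We then pass to the limit along a subsequence $\e_j$. By Proposition \ref{prop_tildeVe_tildeV0}, $\widetilde V_{\e_j}\to\widetilde V_0$ strongly in $D^{1,2}$, and $\widetilde h_{\e_j}\to Ax$ in $H^1_{loc}$, with $A=\nabla h(0)$. Therefore $\nabla\widetilde U_{\e_j}\to A-\nabla\widetilde V_0$ in $L^2_{loc}$, and the left-hand side converges to $\delta\mc{F}_0(Ax-\widetilde V_0)[\xi]$.

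For the right-hand side we need $\chi_{\{\widetilde V_{\e_j}=\widetilde h_{\e_j}\}}\to\chi_{\{\widetilde V_0=Ax\}}$ in $L^1_{loc}$. One inclusion comes from the Fatou argument in the proof of Proposition \ref{prop_tildeVe_tildeV0}: the limit contact set has measure at least the $\limsup$. The reverse follows from the total measure: each contact set has measure exactly $1$, at most $\kappa/R^2$ of it lies outside $B_R$ by Lemma \ref{lemma_bound_capa}, and $|\{\widetilde V_0=Ax\}|\le 1$ by minimality. These together give $L^1$ convergence of the characteristic functions.

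\textbf{Step 2: two-sided bounds.} Write $K_0:=\{\widetilde V_0=Ax\}$, with $|K_0|=1$. Choose $\xi_0$ with $\int_{K_0}\dive\xi_0=1$, which is possible by \cite[Lemma 11.3]{V_book_free_boun} since $|K_0|<\infty$. For $j$ large, $\int_{\{\widetilde V_{\e_j}=\widetilde h_{\e_j}\}}\dive\xi_0\ge 1/2$, so $\Lambda_{\e_j,\eta_{\e_j}}\le 2|\delta\mc{F}_0(\widetilde U_{\e_j})[\xi_0]|+1$. This is bounded by $C\|\nabla\xi_0\|_\infty\int_{\operatorname{supp}\xi_0}|\nabla\widetilde U_{\e_j}|^2$, which is uniformly bounded by Lemma \ref{lemma_bound_capa}; this gives the upper bound $C_2$.

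For the lower bound, suppose $\Lambda_{\e_j,\eta_{\e_j}}\to0$ along a sequence. Then $\delta\mc{F}_0(Ax-\widetilde V_0)[\xi]=0$ for every $\xi\in C^1_c(\R^d)$. Moreover $Ax-\widetilde V_0$ is harmonic on its positivity set, because $\widetilde V_0$ minimizes Dirichlet energy with its contact set fixed. Proposition \ref{prop_unique_conti} applies locally on any ball, so $|\{Ax-\widetilde V_0=0\}|=0$, contradicting $|K_0|=1$. Because this argument runs over arbitrary sequences and subsequences, we obtain a uniform $C_1>0$ for all $\e<\e_0$.

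\textbf{Main obstacle.} The delicate step is the $L^1_{loc}$ convergence of the contact sets together with the identification of the limit identity. Pointwise (Fatou) control only gives one inclusion, so the measure-balance argument is needed. Since every contact set has measure exactly $1$ and its tails are uniformly small by \eqref{limit_meausure_Ve}, the $\liminf$ of $\int\chi_{\{\widetilde V_{\e_j}=\widetilde h_{\e_j}\}}\chi_{K_0}$ over $B_R$ approaches $1$. This forces $\|\chi_j-\chi_{K_0}\|_{L^1(B_R)}\to0$.

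A second, routine point is that the contradiction argument must be organized so that the constants do not depend on the chosen subsequence. We handle this by assuming a sequence $\e_j\to0$ along which $\Lambda\to0$ or $\Lambda\to\infty$, and extracting a further subsequence for which Steps 1–2 apply.
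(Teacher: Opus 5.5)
Your proposal is correct and follows essentially the same route as the paper: rescale the shape-derivative identity and pass to the limit via Proposition \ref{prop_tildeVe_tildeV0}. A test field $\xi_0$ then bounds the multiplier from above, and Proposition \ref{prop_unique_conti} (via a contradiction over sequences $\e_j\to0$) excludes $\Lambda\to 0$. Two of your steps are more careful than the paper, which only asserts them: the measure-balance argument (Fatou plus the tail bound \eqref{limit_meausure_Ve}) for $L^1_{loc}$ convergence of the contact sets, and the justification that $Ax-\widetilde V_0$ is harmonic on its positivity set.
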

\begin{proof}
We argue by contradiction supposing that there exists a sequence $\e_j \to 0^+$ and a sequence $\eta_j\to 0^+$ such that \eqref{ineq_La_aeta_unifor} does not hold, that is,
\begin{equation}\label{proof_prop_La_aeta_unifor_1}
\lim_{j \to \infty}\Lambda_{\e_j,\eta_j}=+\infty \quad \text{ or }\quad \lim_{j \to \infty}\Lambda_{\e_j,\eta_j}=0.
\end{equation}
We may suppose that $\eta_j$ is small enough so that $U_{\e_j,\eta_{j}}$  is a solution to \eqref{prop_minim},  thanks to Proposition \ref{prop_La_meta_unifor}.
By Proposition \ref{prop_shape_for}, letting $U_{\e_j,\eta_j}=(u_{\e_j,\eta_j,1}, \dots, u_{\e_j,\eta_j,k})$, for any $\xi \in C^\infty_c(D,\R^d)$
\begin{equation}
\sum_{i=1}^k \int_D [-2\nabla  u_{\e_j,\eta_j,i}\cdot D\xi   \nabla  u_{\e_j,\eta_j,i}+|\nabla  u_{\e_j,\eta_j,i}|^2 \dive(\xi)] \, dx= -\Lambda_{\e_j,\eta_j} \int_{\Omega_{U_{\e_j,\eta_j}}} \dive(\xi) \, dx.
\end{equation}
A change of variables then yields, for any  $\xi \in C^\infty_c(D_\e,\R^d)$,
\begin{equation}\label{proof_prop}
\sum_{i=1}^k \int_{D_\e} [-2\nabla  \widetilde{u}_{\e_j,\eta_j,i}\cdot D\xi   \nabla  \widetilde u_{\e_j,\eta_j,i}+|\nabla   \widetilde u_{\e_j,\eta_j,i}|^2 \dive(\xi)] \, dx= -\Lambda_{\e,\eta} \int_{\Omega_{ \widetilde U_{\e_j,\eta_j}}} \dive(\xi) \, dx, 
\end{equation}
where $\widetilde{U}_{\e_j,\eta_j}(x):=\e^{-\frac{1}{d}}U_{\e_j,\eta_j}(\e^{\frac{1}{d}} x)$.
By Proposition \ref{prop_tildeVe_tildeV0}, up to passing to a further subsequence,  
\begin{equation}
\widetilde{U}_{\e_j,\eta_j} \to \widetilde{U}_0 \quad \text{ strongly in } H^1_{loc}(\R^d,\R^k),
\end{equation}
where $ \widetilde{U}_0:= \nabla h(0) x -\widetilde{V}_0$ for some minimizer $\widetilde{V}_0$ of \eqref{prob_min_tildeV0}. Indeed, as shown during the proof of Proposition  \ref{prop_tildeVe_tildeV0}, $h_{\e_j} \to \nabla h(0) x  $ strongly in $H^1_{loc}(\R^d,\R^k)$ and for any $R>0$ there exists an $\e_R$ such that $B_R\subset D_\e$ for any $\e \in (0,\e_R)$. 
Hence, we may pass to the limit in \eqref{proof_prop} to conclude that
\begin{equation}
\sum_{i=1}^k \int_{\R^d} [-2\nabla  \widetilde{u}_{0,i}\cdot D\xi   \nabla  \widetilde u_{0,i}+|\nabla   \widetilde u_{0,i}|^2 \dive(\xi)] \, dx= -\left(\lim_{j \to \infty}\Lambda_{\e_j,\eta_j}\right) \int_{\Omega_{ \widetilde U_0}} \dive(\xi) \, dx, 
\end{equation}
for any  $\xi \in C^\infty_c(\R^d,\R^d)$. Indeed,  the pointwise convergence of the characteristic functions $\chi_{\Omega_{ \widetilde U_{\e_j,\eta_j}}}$ to $\chi_{\Omega_{ \widetilde U_0}}$ is guaranteed by the fact that $|\{D_\e\setminus \Omega_{ \widetilde U_{\e_j,\eta_j}}\}|=1$ and, by minimality of $\widetilde{V}_0$, we also have $|\{\R^d\setminus \Omega_{ \widetilde U_0}\}|=1$.
Let $\xi_0 \in C^\infty_c(\R^d,\R^d)$  be  such that 
\begin{equation}
\int_{\Omega_{\widetilde U_0}} \dive(\xi_0) \, dx= 1
\end{equation}
(such a choice is possible, see \cite[Lemma 11.3]{V_book_free_boun}), so that
\begin{equation}
\lim_{j \to \infty}\Lambda_{\e_j,\eta_j}
=-\sum_{i=1}^k \int_{\R^d} [-2\nabla  \widetilde{u}_{0,i}\cdot D\xi_0   \nabla  \widetilde u_{0,i}+|\nabla   \widetilde u_{0,i}|^2 \dive(\xi_0)] \, dx.
\end{equation}
Hence,  $\lim_{j \to \infty}\Lambda_{\e_j,\eta_j}<+\infty$ and so by \eqref{proof_prop_La_aeta_unifor_1} 
\begin{equation}
 \lim_{j \to \infty}\Lambda_{\e_j,\eta_j}=0.
\end{equation}
Then, by Proposition \ref{prop_unique_conti} with $D=B_R$, we conclude that  $|B_R\setminus \Omega_{\widetilde {U}_0}|=0$ for any $R>0$, thus contradicting 
$|\{\R^d\setminus \Omega_{ \widetilde{U}_0}\}|=1$.
\end{proof}

We are now ready to complete the analysis of the asymptotic of minimizers of \eqref{prob_min_measure} as $\e\to 0^+$.
\begin{proof}[\textbf{Proof of Theorem \ref{theo_reg_m_var}}.]
For any $\e>0$, let $U_\e$ be a minimizer of \eqref{prop_minim} with $m:=|D|-\e$. 
We start by showing that  $U_\e \to h$ strongly in  $H^1(D)$ and we notice that the assumption \eqref{hp_h} is not necessary to prove this claim.

Let $V_\e:=U-h$. It is not restrictive to suppose that $0 \in D$. Let $\eta_\e \in C^\infty(\R^d)$ be a cut off function such that $\eta_\e=1$ on 
$B_{r_\e}$ with $r_\e:=(\e/\omega_d)^\frac{1}{d}$, $\eta_\e=0$ in $D\setminus B_{2 r_\e}$ and $|\nabla \eta_\e| \le \frac{2}{r_\e}$.
Then  it holds
\begin{multline}
\int_{D}|\nabla V_\e|^2 \, dx  \le \int_{D}|\nabla w_\e|^2 \, dx \le  
2\int_{D}|\eta_\e|^2|\nabla h|^2 \, dx +2\int_{D}|h|^2|\nabla \eta_\e|^2 \, dx\\
\le C \norm{\nabla h}_{L^\infty(B_{2r},\R^{k,d})} |B_{2 r_\e}|,
\end{multline}
for some dimensional constant $C>0$. Hence, if $d\ge 3$, we have shown that   $U_\e \to h$ strongly in  $H^1(D)$. If $d=2$ we can repeat a similar argument but considering as cut off functions
\begin{equation}
\eta_\e(x):=
\begin{cases}
1, &\text{ if } x \in B_{r^2_\e},\\
\frac{\log(r_\e)-\log(|x|)}{\log(r_\e)-\log(r^2_\e)}, &\text{ if }x \in B_{r_\e}\setminus B_{r^2_\e},\\
0, &\text{ if }x \in \R^d\setminus B_{r_\e}.
\end{cases}
\end{equation}

By Proposition \ref{prop_La_aeta_unifor} and Proposition \ref{prop_Lip} it follows  that $\{U_\e\}$ are bounded in $ C^{0,1}_{loc}(D)$ and so we have proved \eqref{limit_Ue_h}. Furthermore, \eqref{limit_Ue_h_precise} is a consequence of \eqref{eq_Ve_Ue}, \eqref{def_Ve_he}, Proposition \ref{prop_tildeVe_tildeV0} and the minimality of $U_\e$.

Suppose that $x_\e \in |D\setminus \Omega_{U_\e}|$ and that $\mathop{\rm dist}(x_\e, \partial D) \ge \delta$ for some $\delta>0$ that does not depend on $\e$. Then, up to subsequences, $x_\e \to \bar x$ for some $\bar x \in D$. Let us show that $h(\bar x)=0$ so that $\bar x=x_0$. Let $\eta>0$ and $y \in D$ 
such that $|y-\bar x| \le \eta$ 
\begin{multline}
|h(\bar x)|\le |h(\bar x)-h(y)|+|h(y)-U_\e(y)| +|U_\e(y)-U_\e(x_\e)| \\
\le \norm{\nabla h}_{L^\infty(B_\delta(\bar x),\R^{k,d})} |\bar x-y|+ C|y-x_\e| +o(1)
\le \norm{\nabla h}_{L^\infty(B_\delta(\bar x),\R^{k,d})}  \eta + C\eta +o(1), \text{ as }\e \to 0^+
\end{multline}
for some constant $C>0$, since $\{U_\e\}$ is bounded in $ C^{0,1}_{loc}(D)$. Hence, we conclude that $h(\bar x)=0$ since $\eta>0$ is arbitrary and so  the limit in local Hausdorff sense in $D$ of  $D \setminus \Omega_{U_\e}$ is $\{x_0\}$.
\end{proof}

Without assuming \eqref{hp_h}, it is not true in general that $\{U_\e\}$ are locally equi-Lipschitz in $D$ as the next example shows.
\begin{example}\label{exam_not_lip}
Let us consider minimizers $\{u_\e\}$ of 
\begin{equation}
\inf\left\{\int_{B_1} |\nabla w|^2 \, dx : w \in H^1(B_1), w-1 \in H_0^1(B_1), |\Omega_w|=|B_1|-\e\right\}.
\end{equation}
With a symmetrical rearrangement, we can see that $u_\e$ is radial and, letting $r_\e:=(\e/\omega_d)^{\frac{1}{d}}$,
\begin{equation}
u_\e(x):=
\begin{cases}
\frac{r_\e^{2-d}-|x|^{2-d}}{r_\e^{d-2}-1}, &\text{ if } d\ge 3,\\
\frac{\log(r_\e)-\log(|x|)}{\log(r_\e)} &\text{ if } d=2.
\end{cases}
\end{equation}
Hence
\begin{equation}
\pd{u_\e}{r}(r_\e)=
\begin{cases}
\frac{-(2-d)r_\e^{1-d}}{r_\e^{2-d}-1}, &\text{ if } d\ge 3,\\
\frac{1}{r_\e\log(r_\e)} &\text{ if } d=2.
\end{cases}
\end{equation}
Since, 
\begin{equation}
-(2-d)\frac{r_\e^{1-d}}{r_\e^{d-2}-1}=-\frac{(2-d)}{r_\e}+o(1), \quad \text{ as } \e \to 0^+,
\end{equation}
we conclude that $\{u_\e\}$ are not locally equi-Lipschitz in $B_1$, and actually are not even equi-continuous.
\end{example}

\section{The linear case}\label{sec_linear}
In this section,  we analyze in more detail the case in which the boundary datum $g$ in \eqref{prob_min_measure} is given by  $g(x)=Ax$, with $A$ a $k \times d$ constant matrix and $D=B_1$. In this case, the harmonic extension of $g$ to $B_1$ is simply $Ax$ and \eqref{hp_h} translates into
\begin{equation}\label{hp_A}
 \mathop{\rm{Ker}}(A)=\{ 0\} \quad \text{ or equivalently } \quad {\rm rk} (A)=d
\end{equation}
and if this condition is satisfied  all the results of Section \ref{sec_uni} holds. Under the  more general assumption
\begin{equation}\label{hp_A_sharp}
 \quad {\rm rk} (A)>1,
\end{equation}
we  study the optimal constant \eqref{def_La}. This is not restrictive since, if  ${\rm rk} (A)=1$, it is already know that $\Lambda^*(A)=\norm{A}^2$. Furthermore, we  study in more details the properties of minimizers of \eqref{prob_min_boundedness}.

\subsection{Computation of $\Lambda^*(A)$}
With the same notations of Section \ref{sec_uni}, our first result is the following.
\begin{proposition}\label{prop_lim_inf_We}
We have that
\begin{multline}\label{lim_inf_We}
\Lambda^*(A)=\inf\left\{\int_{\R^d} |\nabla W|^2\, dx:W\in D^{1,2}(\R^d, \R^k),  |\{W(x)= Ax\}|=1\right\}\\
=\lim_{\e \to 0^+}\inf\left\{\int_{B_{1/\e}} |\nabla W|^2\, dx:W\in  H_0^1(B_{1/\e}, \R^k),  |\{W(x)=Ax\}|=1\right\}.
\end{multline}
\end{proposition}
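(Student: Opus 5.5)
The plan is to prove three facts. Write $I(A)$ for the infimum over $D^{1,2}(\R^d,\R^k)$ in \eqref{lim_inf_We}, and $I_R(A)$ for the same infimum restricted to $W\in H_0^1(B_R,\R^k)$. We show: (a) $R\mapsto I_R(A)$ is nonincreasing and $\lim_{R\to\infty}I_R(A)=I(A)$; (b) $\Lambda^*(A)\ge I(A)$; (c) $\Lambda^*(A)\le I(A)$.

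\emph{Step (a).} Monotonicity holds because $H_0^1(B_R)\subset H_0^1(B_{R'})$ for $R<R'$. Hence the limit exists and is at least $I(A)$. The reverse inequality follows from Proposition \ref{prop_equiv_ge}: compactly supported competitors with contact set of measure exactly $1$ approximate $I(A)$, and each of them lies in some $H_0^1(B_R)$. Setting $R=1/\e$ gives the second equality in \eqref{lim_inf_We}.

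\emph{Step (b): $\Lambda\le I(A)$ implies $Ax$ is a global minimizer.} Fix $\Lambda\le I(A)$ and $R>0$. Take a competitor $U$ with $U-Ax\in H_0^1(B_R)$ and put $W:=Ax-U\in H_0^1(B_R)$. Since $Ax$ is harmonic,
\[
J_\Lambda(U,B_R)-J_\Lambda(Ax,B_R)=\int_{B_R}|\nabla W|^2\,dx+\Lambda\big(|\Omega_U\cap B_R|-|\Omega_{Ax}\cap B_R|\big).
\]
The set $\{Ax=0\}=\ker A$ has measure zero when ${\rm rk}(A)\ge1$, so $|\Omega_{Ax}\cap B_R|=|B_R|$. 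Therefore $|\Omega_U\cap B_R|-|B_R|=-|\{W=Ax\}\cap B_R|=:-t$, and it suffices to show $\int|\nabla W|^2\ge \Lambda t$. By the scaling $W_t(x):=t^{-1/d}W(t^{1/d}x)$, which preserves the linear function $Ax$, the contact set of $W_t$ has measure $1$ and $\int|\nabla W_t|^2=t^{-1}\int|\nabla W|^2$. Hence $\int|\nabla W|^2\ge tI(A)\ge t\Lambda$, and $\Lambda^*(A)\ge I(A)$.

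\emph{Step (c): $\Lambda>I(A)$ implies $Ax$ is not a global minimizer.} Take $\Lambda>I(A)$. By Proposition \ref{prop_equiv_ge}, choose a compactly supported $W\in H_0^1(B_R)$ with $|\{W=Ax\}|=1$ and $\int|\nabla W|^2<\Lambda$. Then $U:=Ax-W$ satisfies $J_\Lambda(U,B_R)<J_\Lambda(Ax,B_R)$ by the identity above, so $Ax$ is not a minimizer in $B_R$. Note that Definition \ref{def:global-solution} demands minimality in every ball, so a single ball suffices here. It remains to observe that the set of $\Lambda$ for which $Ax$ is a global minimizer is an interval $(0,\Lambda^*]$. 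This needs no separate monotonicity argument in $\Lambda$: step (b) gives every $\Lambda\le I(A)$ and step (c) excludes every $\Lambda>I(A)$, so the supremum equals $I(A)$.

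The argument uses only ${\rm rk}(A)\ge 1$, so it holds in the stated generality. I expect the main subtlety to be the measure bookkeeping: showing $|\Omega_{Ax}\cap B_R|=|B_R|$ and making sure the contact set $\{W=Ax\}$ is exactly $B_R\setminus\Omega_U$. Both are immediate once $|\ker A|=0$ is noted, and beyond that the scaling invariance of the linear datum carries the whole proof.
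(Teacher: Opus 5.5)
Your proof is correct. It rests on the same three ingredients as the paper's proof. The first is the identity $J_\Lambda(Ax-W,B_R)-J_\Lambda(Ax,B_R)=\int_{B_R}|\nabla W|^2-\Lambda|\{W=Ax\}\cap B_R|$, which holds because $\nabla(Ax)\equiv A$ and $W\in H^1_0$. The second is the invariance of the linear datum under $W\mapsto t^{-1/d}W(t^{1/d}\cdot)$. The third is the compactly supported approximation of Proposition~\ref{prop_equiv_ge}.

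The organization differs, however. The paper reduces to $B_1$, leaving implicit that minimality in $B_1$ transfers to every $B_R$ by scaling. It then writes $\Lambda^*(A)$ as the infimum over $\e$ in \eqref{eq_La} and splits into cases according to whether this infimum equals the limit as $\e\to0^+$. In the other case it takes minimizers $V_{\e_j}$ with $\e_j\to\e_0>0$, passes to a weak limit, and uses upper semicontinuity of the contact-set measure to get $I(A)\le\Lambda^*(A)$.

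Your observation that $R\mapsto I_R(A)$ is nonincreasing, by zero extension, shows that this second case never occurs. The $\e$-level problem in \eqref{eq_La} equals $I_{\e^{-1/d}}(A)$, which is nondecreasing in $\e$, so the infimum over $\e$ is automatically the limit at $0^+$. Your direct comparison $\int|\nabla W|^2\ge t\,I(A)$ in every ball, together with a single bad competitor when $\Lambda>I(A)$, then gives both inequalities without any existence or compactness argument. You also make explicit where ${\rm rk}(A)\ge1$ enters: $\ker A$ is Lebesgue-null, so $|\Omega_{Ax}\cap B_R|=|B_R|$ and zero extensions create no extra contact.

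The paper's route has one side benefit: it records the $B_1$ ratio formula \eqref{eq_La}, which is reused in the proof of Theorem~\ref{theo_example}. That formula also follows at once from your argument.
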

\begin{proof}
Let $\delta >0$ and let $W\in D^{1,2}(\R^d, \R^k)$ be such that $|\{W(x)= \nabla Ax\}|=1$ with compact support  and 
\begin{equation}
    \inf\left\{\int_{\R^d} |\nabla W|^2\, dx:W\in D^{1,2}(\R^d, \R^k),  |\{W(x)= \nabla Ax\}|=1\right\} \ge \int_{\R^d} |\nabla W|^2\, dx -\delta.
\end{equation}
It follows that, as soon as the support of $W$ is in $B_{1/\e}$, 
\begin{multline}
 \inf\left\{\int_{\R^d} |\nabla W|^2\, dx:W\in D^{1,2}(\R^d, \R^k),  |\{W(x)= \nabla Ax\}|=1\right\}\\
\ge\int_{\R^d} |\nabla W|^2\, dx -\delta \\
\ge  \inf\left\{\int_{B_{1/\e}} |\nabla W|^2\, dx:W\in H_1^0(_{B_{1/\e}}, \R^k),  |\{W(x)= \nabla Ax\}|=1\right\}-\delta.
\end{multline}
Passing to the limit as $\e\to 0^+$, since $\delta$ is arbitrary, we have one inequality in the second equality of   \eqref{lim_inf_We}.  The reverse inequality is trivial.

Next we show that \eqref{eq_La} holds.
The function $Ax$ minimizes \eqref{prob_min_vec} with $R=1$ for some constant $\Lambda>0$ if and only if for any $W \in H^1_0(B_1)$
\begin{equation}
\int_{B_1}|\nabla Ax|^2 dx +\Lambda |B_1|\le \int_{B_1}|\nabla (W-Ax)|^2 dx +\Lambda |\Omega_{W-A}|
\end{equation}
or equivalently, since $Ax$ is harmonic, 
\begin{equation}
\int_{B_1}|\nabla W|^2 dx \ge \Lambda (|B_1|-|\Omega_{W-A}|)=\Lambda |\{W(x)=Ax\}|
\end{equation}
which proves  \eqref{eq_La}.

If $\Lambda^*(A)$, e.g. the infimum in \eqref{eq_La}, coincides with the limit as $\e \to 0^+$, then the first equality in \eqref{lim_inf_We} is a consequence of the second one and the same change of variables  of \eqref{def_Ve_he}.

Otherwise, there exist $\e_0>0$, a sequence $\e_j \to \e_0$  as $j \to \infty$, and a sequence  $V_{\e_j} \in H_0^1(B_1, \R^k)$ such that $V_{\e_j}$ minimizes 
\begin{equation}
\inf\left\{\frac{1}{\e_j}\int_{B_1} |\nabla W|^2 \, dx:  W \in H^1_0(B_1,\R^k), |\{W(x)=Ax\}|=\e_j\right\},
\end{equation}
and 
\begin{equation}
\Lambda^*(A)=\lim_{j \to \infty}\frac{1}{\e_j}\int_{B_1} |\nabla  V_{\e_j}|^2 \, dx.
\end{equation}
Up to passing to a subsequence, there exists $V_0 \in H_0^1(B_1, \R^k)$ such that  $V_{\e_j} \rightharpoonup V_0$ weakly in $H_0^1(B_1, \R^k)$ as $j \to \infty$.
Then by the lower semicontinuity of norms
\begin{equation}
\int_{B_1} |\nabla  V_0|^2 \, dx\le \liminf_{j \to \infty}\int_{B_1} |\nabla  V_{\e_j}|^2 \, dx
\end{equation}
and it is easy to check that 
\begin{equation}
|\{V_0(x)=Ax\}| \ge \e_0.
\end{equation}
Then, defining $\widetilde{V}_0(x):=\e_0^{-1/d}V_0(\e_0^{1/d}x)$, by Proposition \ref{prop_equiv_ge}
\begin{multline}
\inf\left\{\int_{\R^d} |\nabla W|^2\, dx:W\in D^{1,2}(\R^d, \R^k),  |\{W(x)= Ax\}|=1\right\} \\
\le \int_{B_{1/\e_0}} |\nabla  \widetilde{V}_0|^2 \, dx=\frac{1}{\e_0}\int_{B_1} |\nabla  V_0|^2 \, dx \le \liminf_{j \to \infty}\frac{1}{\e_j}\int_{B_1} |\nabla  V_{\e_j}|^2 \, dx =\Lambda^*(A).
\end{multline}
On the other hand, 
\begin{multline}
\Lambda^*(A)= \inf_{\e \in (0,|B_1|)} \inf \left\{\frac{1}{\e}\int_{B_1} |\nabla W|^2\, dx:W\in  H_0^1(B_1, \R^k),  |\{W(x)=Ax\}|=\e\right\} \\
= \inf_{\e \in (0,|B_1|)}\inf\left\{\int_{B_{1/\e}} |\nabla W|^2\, dx:W\in  H_0^1(B_{1/\e}, \R^k),  |\{W(x)=Ax\}|=1 \right\} \\
\ge \inf\left\{\int_{\R^d} |\nabla W|^2\, dx:W\in D^{1,2}(\R^d, \R^k),  |\{W(x)=Ax\}|=1 \right\}.
\end{multline}
In conclusion, also in this case
\begin{equation}
\Lambda^*(A)= \inf\left\{\int_{\R^d} |\nabla W|^2\, dx:W\in D^{1,2}(\R^d, \R^k),  |\{W(x)=Ax\}|=1 \right\}
\end{equation}
thus we have  proved   \eqref{lim_inf_We}.
\end{proof}
Since we are not assuming  \eqref{hp_h} (or equivalently \eqref{hp_A}), it is not clear at all that \eqref{prob_min_boundedness} has  a minimizer. Indeed, the proof of Lemma \ref{lemma_bound_capa} does not work without    \eqref{hp_h} and Lemma \ref{lemma_bound_capa} is  a crucial ingredient in the proof of Proposition \ref{prop_tildeVe_tildeV0}. Hence, whenever ${\rm rk}(A)<d$ to compute $\Lambda^*(A)$ we  follow a different  strategy based on a reduction argument (to the case of maximal rank) by a smart choice of comparators.

\begin{proof}[\textbf{Proof of Theorem \ref{theo_La_computed}.}] 
Suppose that ${\rm rk}(A)=n$ with $n \in \{2,\dots,d\}$. Let $\{\tau_i\}_{i=1,\dots d-n}$ be an orthonormal basis  of the kernel of $A$ and  let us  complete it to an orthonormal  basis $\{\tau_i\}_{i=1,\dots d}$  of $\R^d$.  Defining   the orthogonal matrix  $Q \in \R^{d,d}$ as the matrix with the vectors $\{\tau_i\}_{i=1,\dots d}$  as rows,  
$A=\begin{bmatrix} A_1, 0\end{bmatrix}Q$, where $A_1 \in \R^{k,n}$ has rank $n$.

Furthermore, for any $W\in D^{1,2}(\R^d, \R^d)$, the change of variables $x=Q^Tx'$ yields,
letting $W_Q(x'):=W(Q^Tx')$,
\begin{equation}
\int_{\R^d} \nabla W\cdot \nabla W\, dx=\int_{\R^d} Q^T\nabla W \cdot Q^T\nabla W\, dx=\int_{\R^d}\nabla W_Q\cdot \nabla W_Q\, dx',
\end{equation}
while 
\begin{equation}
\{W(x)=Ax\}=\{W_Q(x')=\begin{bmatrix} A_1, 0\end{bmatrix}x'\}. 
\end{equation}
Hence,
\begin{multline}
\inf\left\{\int_{\R^d} |\nabla W|^2\, dx:W\in D^{1,2}(\R^d, \R^k),  |\{W(x)= Ax\}|=1\right\}\\
=\inf\left\{\int_{\R^d} |\nabla W|^2\, dx:W\in D^{1,2}(\R^d, \R^k),  |\{W(x)= \begin{bmatrix} A_1, 0\end{bmatrix}x\}|=1\right\},
\end{multline}
thus  by Proposition \ref{prop_lim_inf_We}, we only have to show that 
\begin{multline}\label{proof_theo_La_computed_0_5}
\inf\left\{\int_{\R^d} |\nabla W|^2\, dx:W\in D^{1,2}(\R^d, \R^k),  |\{W(y)= \begin{bmatrix} A_1, 0\end{bmatrix}x\}|=1\right\}=\\
=\inf\left\{\int_{\R^n} |\nabla W|^2\, dy:W\in D^{1,2}(\R^n, \R^k),  |\{W(y)= A_1y\}|=1\right\}.
\end{multline}
Let  $\R^d=\R^n \times \R^{d-n}$ with variables $x=(y,z) \in \R^n \times \R^{d-n}$. Let us define the $n$-dimensional hyperplane
$\pi_z:=\R^n \times \{z\}$.
We  also denote with $B_r'$ and $B_r''$ the balls of radius $r$ and center $0$ in $\R^n$ and $\R^{d-n}$ respectively.
We divide  the proof of \eqref{proof_theo_La_computed_0_5} in two steps.

\textbf{Step 1. Lower bound.}
Let $W\in D^{1,2}(\R^d, \R^k)$ with   $|\{W(x)=\begin{bmatrix} A_1, 0\end{bmatrix}x\}|=1$. To simplify the notations, let  $K:=\{W(x)= \begin{bmatrix} A_1, 0\end{bmatrix}x\}$ and $K_z:=K \cap \pi_z$.
For any $z$ such that $\mathcal{H}^n(K_z)>0$, let us define
\begin{equation}
W_z(y):=\frac{1}{(\mathcal{H}^n(K_z))^\frac{1}{n}} W(\mathcal{H}^n(K_z))^\frac{1}{n} y,z).
\end{equation}
If $\int_{\R^n} |\nabla_y W|^2\, dy  <+\infty$, that is, for  a.e. $z \in \R^{d-n}$, a change of variables yields
\begin{equation}
\int_{\R^n} |\nabla_y W|^2\, dy  =\mathcal{H}^n(K_z))\int_{\R^n} |\nabla_y W_z|^2\, dy.
\end{equation}
Furthermore, since $|\{W_z (y)= A_1 y \}|=1$,
\begin{equation}
\int_{\R^n} |\nabla_y W|^2\, dy  \ge \mathcal{H}^n(K_z))\int_{\R^n} |\nabla_y V|^2\, dy,
\end{equation}
where  $V$ is a minimizer of the right hand side of \eqref{eq_theo_La_computed}.
Hence, 
\begin{multline}
\int_{\R^d} |\nabla W|^2\, dx  \ge \int_{\R^d} |\nabla_y W|^2\, dx=\int_{\R^{n-d}}\int_{\R^n} |\nabla_y W|^2\, dy \, dz\\
\ge\int_{\R^{n-d}}\int_{\R^n} \mathcal{H}^n(K_z))|\nabla_y V|^2\, dy \, dz= \int_{\R^n}|\nabla_y V|^2\, dy\\
=\min\left\{\int_{\R^n} |\nabla W|^2\, dx:W\in D^{1,2}(\R^n, \R^k),  |\{W(y)= A_1y\}|=1\right\},
\end{multline}
since $\int_{\R^{n-d}} \mathcal{H}^n(K_z)\ dz =|K|=1$. Hence, since $W$ is an arbitrary comparator for \eqref{prob_min_tildeV0}, 
\begin{multline}
\inf\left\{\int_{\R^d} |\nabla W|^2\, dx:W\in D^{1,2}(\R^d, \R^k),  |\{W(x)= Ax\}|=1\right\}\\
\ge  \min\left\{\int_{\R^n} |\nabla W|^2\, dy:W\in D^{1,2}(\R^n, \R^k),  |\{W(y)= A_1y\}|=1\right\}.
\end{multline}
\textbf{Step 2. Upper bound.} Let $\varphi \in C^\infty_c(\R^{d-n})$ be a cut off function such that $\varphi=1$ in $B_1''$. For any $\delta>0$, let us define 
\begin{equation}
\varphi_\delta(z)=\varphi(\omega_{d-n}^{\frac{1}{d-n}}\delta^{\frac{1}{d-n}} z).
\end{equation}
Let us consider a sequence $V_j$  of minimizers of \eqref{prob_min_tildeVe} with $\e=\e_j$ and $\e_j \to 0^+$ in dimension $n$ and with $h_j=A_1$
for any $j \in \mathbb{N}$. Furthermore,  by Proposition \ref{prop_tildeVe_tildeV0}, we may choose $V_j$ so that $V_j \to V$ strongly in $D^{1,2}(\R^n,\R^k)$ as $j\to \infty$ where $V$ is a minimizer of  the right hand side of \eqref{eq_theo_La_computed}.
Let $V_{j,\delta}(y):=\delta^{\frac{1}{n}} V_j(y/\delta^{\frac{1}{n}})$ for any $\delta>0$ and
\begin{equation}
W_{\delta,j,\varphi}(y,z):= V_{j,\delta}(y)\varphi_\delta(z).
\end{equation}
If we define  $K_j:=\{V_j(y)=A_1y\}$ and $r_\delta:=\omega_{d-n}^{-\frac{1}{d-n}}\delta^{-\frac{1}{d-n}}$, 
\begin{equation}
 V_{j,\delta}(y)\varphi_\delta(z)=\begin{bmatrix} A_1, 0\end{bmatrix}(y,z) \quad \text{ if  } \quad y \in \delta^\frac{1}{n} K_j \text{ and } z \in B''_{r_\delta}
\end{equation}
so that 
\begin{equation}
|\{W_{\delta,j,\varphi}(x)=\begin{bmatrix} A_1, 0\end{bmatrix} x\}| \ge  \delta  | B''_{r_\delta}|= 1.
\end{equation}
Then, by Proposition \ref{prop_equiv_ge},
\begin{equation}
\inf\left\{\int_{\R^d} |\nabla W|^2\, dx:W\in D^{1,2}(\R^d, \R^k),  |\{W(x)= \begin{bmatrix} A_1, 0\end{bmatrix} x |=1\right\}
\le   \int_{\R^d} |\nabla W_{\delta,j,\varphi}|^2\, dx.
\end{equation}
Furthermore, the changes of variables $y'=\delta^{-\frac{1}{n}} y$ and $z'=\omega_{d-n}^{\frac{1}{d-n}}\delta^{\frac{1}{(d-n)}} z$ yield 
\begin{multline}
\int_{\R^d} |\nabla W_{\delta,j,\varphi}|^2\, dx= \int_{\R^{d-n}} |\varphi_\delta|^2 \, dz \int_{\R^n}  |\nabla V_{j,\delta}|^2 \, dy +
\int_{\R^{d-n}} |\nabla \varphi_\delta|^2 \, dz \int_{\R^n}  |V_{j,\delta}|^2 \, dy\\
=\frac{1}{\omega_{d-n}}\int_{\R^{d-n}} | \varphi|^2 \, dz  \int_{\R^n}  |\nabla V_j|^2 \, dy+\omega_{d-n}^{-1-\frac{2}{d-n}}\delta^{\frac{2}{d-n}+\frac{2}{n}} \int_{\R^{d-n}} |\nabla \varphi|^2 \, dz \int_{\R^n}  |V_j|^2 \, dy.
\end{multline}
Passing to the limit as $\delta \to 0^+$ and then as $j \to \infty$, we obtain
\begin{multline}
\inf\left\{\int_{\R^d} |\nabla W|^2\, dx:W\in D^{1,2}(\R^d, \R^k),  |\{W(x)= \begin{bmatrix} A_1, 0\end{bmatrix} x |=1\right\}\\
\le \frac{1}{\omega_{d-n}} \int_{\R^{d-n}} |\varphi|^2 \, dz \int_{\R^n}  |\nabla V|^2 \, dy
\end{multline}
for any  cut off function $\varphi \in C^\infty_c(\R^{d-n})$ such that $\varphi=1$ in $B_1''$. Hence, we may take a sequence $\varphi_j$ such that $\varphi_j=1$ in $B''_{1+1/j}$ and $\varphi_j=0$  in $\R^d\setminus B''_{1+2/j}$ and pass to the limit as $j \to \infty$ to show that 
\begin{equation}
\inf\left\{\int_{\R^d} |\nabla W|^2\, dx:W\in D^{1,2}(\R^d, \R^k),  |\{W(x)= \begin{bmatrix} A_1, 0\end{bmatrix} x |=1\right\}\le  \int_{\R^n}  |\nabla V|^2 \, dy.
\end{equation}
In conclusion, we have proved that
\begin{multline}
\inf\left\{\int_{\R^d} |\nabla W|^2\, dx:W\in D^{1,2}(\R^d, \R^k),  |\{W(x)=\begin{bmatrix} A_1, 0\end{bmatrix} x |=1\right\}\\
\le  \min\left\{\int_{\R^n} |\nabla W|^2\, dy:W\in D^{1,2}(\R^n, \R^k),  |\{W(y)= A_1y\}|=1\right\},
\end{multline}
which, combined with the reverse inequality proved in Step 1, yields \eqref{eq_theo_La_computed}.

As  mentioned in Section \ref{sec_intro}, \eqref{eq_La_rk1} has  already been  proven  in \cite[Section 2D]{MTV_reg_vect}. On the other hand, if $\mathop{\rm rk}(A)>1$, then 
\begin{multline}
\Lambda^*(A)=\int_{\R^n} |\nabla V|^2 \, dy >\int_{\{V(y)=Ay\}} |\nabla V|^2 \, dy =\norm{A_1}^2=\norm{[A_1,0]}^2 \\
=\Tr([A_1,0]^T[A_1,0])= \Tr(Q^T[A_1,0]^T[A_1,0]Q)=\Tr(A^TA)=\norm{A}^2,
\end{multline}
where $V$ is a minimizer of \eqref{eq_theo_La_computed}. Hence, we have also proved \eqref{eq_La_rk_bigger_1}.
\end{proof}

\subsection{$\Lambda^*(A)$ for blows-up of minimizers of the vectorial Bernoulli problem}
Let $U$ be a minimizers of \eqref{prob_min_vec} and let $x_0 \in \mathrm{Sing}_2(\partial \Omega_{U}) :=  \Omega^{(1)}_{U} \cap \partial  \Omega_{U} \cap D$.
The constant $\Lambda^*(A)$ defined in \eqref{def_La} turns out to depend only on $x_0$ and not on $A \in \mc{BU}_U(x_0)$. To prove this result, we need a preliminary lemma.

\begin{lemma}\label{lemma_dependence_A}
Let $A$ be a $k \times d$ matrix with $\mathop{\rm{rk}}(A)=d$. Then 
\begin{multline}
\inf\left\{\int_{\R^d} |\nabla W|^2\, dy:W\in D^{1,2}(\R^d, \R^k),  |\{W(x)= Ax\}|=1\right\}\\
=\inf\left\{\int_{\R^d} B\nabla W \cdot \nabla  W\, dy:W\in D^{1,2}(\R^d, \R^d),  |\{W(x)= x\}|=1\right\},
\end{multline}
where $B \in \R^{d,d}$ is the diagonal matrix with the eigenvalues of $A^TA$, counted with multiplicity, as entries on  its diagonal.
\end{lemma}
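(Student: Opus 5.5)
The plan is to use the polar decomposition of $A$ to reduce in three steps: first to $\R^d$-valued competitors and a symmetric positive definite matrix, then to the identity obstacle, and finally to a diagonal weight by an orthogonal change of variables. Since ${\rm rk}(A)=d$, the matrix $A^TA$ is positive definite, so $S:=(A^TA)^{1/2}\in\R^{d,d}$ is symmetric and invertible. Setting $R:=AS^{-1}\in\R^{k,d}$ we get $A=RS$ and $R^TR=S^{-1}A^TAS^{-1}={\rm Id}_d$, so the columns of $R$ are orthonormal in $\R^k$. We also write $S^2=A^TA=Q^TBQ$ with $Q$ orthogonal and $B$ diagonal, as in the statement. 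Throughout, $B\nabla W\cdot\nabla W$ is read with $B$ acting on the component index, that is, $\sum_{i,j}b_{ij}\nabla w_i\cdot\nabla w_j=\Tr(\nabla W^TB\nabla W)$ with $\nabla W\in\R^{d,d}$ having rows $\nabla w_i$.

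\textbf{Step 1 (reduction to $\R^d$-valued maps).} Given $W\in D^{1,2}(\R^d,\R^k)$, decompose it orthogonally as $W=RW_1+W_2$, where $W_1:=R^TW\in D^{1,2}(\R^d,\R^d)$ and $W_2$ takes values in $({\rm Range}\,R)^\perp$. Pointwise we have $|\nabla W|^2=|\nabla W_1|^2+|\nabla W_2|^2$. Moreover $Ax=R(Sx)$ lies in ${\rm Range}\,R$, so
\begin{equation}
\{W(x)=Ax\}=\{W_1(x)=Sx\}\cap\{W_2=0\}\subset\{W_1(x)=Sx\}.
\end{equation}
Discarding $W_2$ therefore lowers the energy and enlarges the contact set. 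By the ``$\ge 1$'' formulation in Proposition \ref{prop_equiv_ge}, whose proof works equally for a general linear obstacle in the $\R^d$-valued setting, this gives one inequality between the $A$-problem and the problem for $\R^d$-valued maps with obstacle $Sx$. Conversely, any $W_1$ with $|\{W_1=Sx\}|=1$ produces $W:=RW_1$ with the same energy and the same contact set. Hence the two infima coincide. I expect this step to be the only non-mechanical one, and the point to watch is exactly that the measure of the contact set may increase after projecting, which is why the ``$\ge1$'' characterization is needed.

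\textbf{Step 2 (identity obstacle).} Write $W_1=SV$. Then $\{W_1(x)=Sx\}=\{V(x)=x\}$ because $S$ is invertible, and
\begin{equation}
|\nabla W_1|^2=\Tr(\nabla V^TS^2\nabla V)=S^2\nabla V\cdot\nabla V.
\end{equation}
The map $V\mapsto SV$ is a bijection of $D^{1,2}(\R^d,\R^d)$ onto itself, so the infimum equals the infimum of $\int S^2\nabla V\cdot\nabla V$ over $V$ with $|\{V=x\}|=1$.

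\textbf{Step 3 (diagonalization).} Set $V'(x):=QV(Q^Tx)$. Then $\{V'(x)=x\}=Q\{V(y)=y\}$ has the same measure, and $\nabla V'(x)=Q\,\nabla V(Q^Tx)\,Q^T$. A trace computation, using $S^2=Q^TBQ$ and the change of variables $y=Q^Tx$, gives
\begin{equation}
\int_{\R^d}S^2\nabla V\cdot\nabla V\,dy=\int_{\R^d}B\nabla V'\cdot\nabla V'\,dx.
\end{equation}
Since $V\mapsto V'$ is again a bijection of $D^{1,2}(\R^d,\R^d)$, the infimum over $V$ equals the infimum over $V'$, which is the right-hand side of the lemma.
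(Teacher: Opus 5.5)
Your proof is correct and takes essentially the same approach as the paper: reduce to the identity obstacle with weight $A^TA$ acting on the component index, then diagonalize by an orthogonal change of variables. It is more careful than the paper's argument in two places. Your projection $W\mapsto R^TW$, combined with the ``$\ge 1$'' formulation of Proposition \ref{prop_equiv_ge}, justifies the paper's step ``$W=Ax\iff A^{-1}W=x$'', which is only literal when $k=d$. Your conjugation $V'(x)=QV(Q^Tx)$ rotates domain and target simultaneously, which is exactly what is needed to preserve the constraint $\{V=x\}$; the paper's rotation $W_Q(x')=W(Qx')$ acts on the domain only and does not preserve it.
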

\begin{proof}
The condition $W(x)= Ax$ is equivalent to $A^{-1}W(x)=x$ and so 
\begin{multline}
\inf\left\{\int_{\R^d} |\nabla W|^2\, dy:W\in D^{1,2}(\R^d, \R^k),  |\{W(x)= Ax\}|=1\right\}\\
=\inf\left\{\int_{\R^d} A^TA\nabla W\cdot \nabla W\, dy:W\in D^{1,2}(\R^d, \R^d),  |\{W(x)= x\}|=1\right\}.
\end{multline}
Furthermore, since $A^TA$ is symmetric, there exists an orthonormal matrix $Q \in \R^{d,d}$ such that $A^TA=Q^TBQ$. For any $W\in D^{1,2}(\R^d, \R^d)$, the change of variables $x=Qx'$ yields,
letting $W_Q(x'):=W(Qx')$,
\begin{equation}
\int_{\R^d} A^TA\nabla W\cdot \nabla W\, dx=\int_{\R^d}B Q\nabla W \cdot Q\nabla W\, dx=\int_{\R^d}B\nabla W_Q\cdot \nabla W_Q\, dx',
\end{equation}
thus completing the proof.
\end{proof}

\begin{proof}[\textbf{Proof of Theorem \ref{theo_dependence_LA}.}]
We first observe that the rank of a blow-up limit  $A$ of $U$ at $x$ depends only on the point $x$, see \cite[Lemma 4.2]{MTV_reg_vect}.  
Letting for any $\sigma \in \R^k$
\begin{equation}
\Phi(U,r,x_0,\sigma):=\frac{1}{r^4}\int_{B_r(x_0)\cap \{\sigma \cdot U>0\}} \frac{|\nabla( U\cdot \sigma)|^2}{|x-x_0|^{d-2}} \, dx
\int_{B_r(x_0)\cap \{\sigma \cdot U<0\}} \frac{|\nabla( U\cdot \sigma)|^2}{|x-x_0|^{d-2}} \, dx
\end{equation}
by the Alt-Cafferelli-Friedman Monotonicity formula,  $\Phi(U,r,x_0,\sigma)$ is not-decreasing in $r$. As proved in   \cite[Lemma 3.1]{PESV_rec} this implies that $|A^T\sigma|=|(A')^T\sigma|$ for any $\sigma \in \R^k$ and any blow-up limit $A,A' \in \mc{BU}_U(x_0)$. 
It follows that for any $\sigma \in \R^k$
\begin{equation}
\sigma\cdot AA^T \sigma=A^T\sigma \cdot A^T \sigma=|A^T\sigma|^2=|(A')^T\sigma|^2= \sigma\cdot A'(A')^T \sigma
\end{equation}
thus $AA^T=A'(A')^T$. It follow that there exists an orthogonal matrix $Q\in \R^d$ such that $A=A'Q$ and so  $A^TA=Q^T(A')^TA'Q$. In particular $A^TA$ and $(A')^TA'$ have the same eigenvalues thus also the same trace.
Then, by  Theorem \ref{theo_La_computed} and Lemma \ref{lemma_dependence_A} (applied in dimension $n=\mathop{\rm{rk}}(A)$ to $A_1$), we conclude the constant $\Lambda^*(A)$ defined in \eqref{def_La} depend only on $x_0$ if $\mathop{\rm{rk}}(A)>1$.
If instead $\mathop{\rm{rk}}(A)=1$, then we notice that 
\begin{equation}\label{eq_norm_A_tr}
\norm{A}^2=\sum_{i=1}^k\sum_{j=1}^d a_{i,j}^2 =\sum_{i=1}^d(A^TA)_{i,i}=\Tr(A^TA),
\end{equation}
thus also in this case $\Lambda^*(A)$ depends only on $x_0$.

Since   $\Phi(U,r,x_0,\sigma)$ is not-decreasing in $r$ and continuous in $x_0$, it follows that 
$\Phi(U,x_0,\sigma):=\lim_{r\to 0^+}\Phi(U,r,x_0,\sigma)$ is upper semicontinuos in $x_0$. Since $\Phi(U,x_0,\sigma)=c_d|A^T\sigma|$, where $c_d$ is a dimensional constant,  for any   $A\in \mc{BU}_U(x_0)$ (see the proof of  \cite[Lemma 3.1]{PESV_rec}),  if $x_n \to x_0$ with $x_n \in \mathrm{Sing}_2(\partial \Omega_{U})$ and $A_n \in \mc{BU}_U(x_n)$ then for any $\sigma \in \R^k$
\begin{equation}
\lim_{n\to \infty}A_nA_n^T \sigma \cdot \sigma \le AA^T \sigma \cdot \sigma.
\end{equation}
By the min-max characterization of the eigenvalues $\{\la_i(A_nA_n^T)\}$ of the symmetric matrix  $A_nA_n^T \in \R^{k,k}$ it follows  that 
\begin{equation}
\limsup_{n \to \infty}\la_i{(A_nA_n^T)}\le \la_i(AA^T),
\end{equation}
where $\{\la_i(AA^T)\}$ are the eigenvalues of $AA^T \in \R^{k,k}$. Since the positive eigenvalues of $AA^T$ and $A^TA$ coincides, by Theorem \ref{theo_La_computed},  Lemma   \ref{lemma_dependence_A} and \eqref{eq_norm_A_tr} we conclude that  $x_0 \mapsto \Lambda^*$  is upper semicontinuos. 
\end{proof}

\subsection{Boundedness of the contact set}\label{subsec_bound_min}
In this subsection, we prove Theorem \ref{theo_V} using the minimality of $V$, harmonic replacement on the complementary of balls  and an iteration procedure in the spirit of De Giorgi regularity theory. 

\begin{proof}[\textbf{Proof of Theorem \ref{theo_V}}]
Let $V$ be a  minimizer of \eqref{prob_min_boundedness} and $R>0$. 
Similarly to  Lemma \ref{lemma_bound_capa}, let us  $\Phi_R$ be  the harmonic extension $\Phi_R$ of $V$ from $\partial B_R$ in $\R^d\setminus B_R$, that is, 
\begin{equation}\label{def_Phi_R}
\Phi_R(x)=\frac{|x|^2-R^2}{d \omega_d R} \int_{\partial B_R} \frac{V(\xi)}{|\xi-x|^d} \, d\mc{H}^{d-1}_\xi \quad 
\text{ for any } x \in \R^d\setminus \overline{B_R}
\end{equation}
and let 
\begin{equation}
W_R:=
\begin{cases}
V, &\text{ in }B_R,\\
\Phi_R, &\text{ in }\R^d \setminus B_R.
\end{cases}
\end{equation}
Since $|\{V(x)= Ax\}|=1$, there exists $R_0>0$ such that 
$\delta_R:=|\{V(x)=Ax\}\setminus B_R| \in [0,1/2)$ for any $R \ge R_0$. Then we may define 
\begin{equation}
\widetilde W_R(x):=\frac{1}{(1-\delta_R)^{1/d}}W_R((1-\delta_R)^{1/d} x).
\end{equation}
By definition, $|\{\widetilde W_R(x)=Ax\}|\ge 1$ thus  by minimality  of $V$ and a change of variables
\begin{multline}
\int_{\R^d}|\nabla V|^2 \, dx \le  \int_{\R^d}|\nabla \widetilde W_R|^2 \, dx 
=\frac{1}{1-\delta_R}\int_{\R^d}|\nabla W_R|^2\,dx\\
=\frac{1}{1-\delta_R}\left(\int_{\R^d}|\nabla V|^2 \, dx-\int_{\R^d}(|\nabla V|^2-|\nabla W_R|^2) \, dx\right).
\end{multline}
It follows that for any $R\ge R_0$, by harmonicity of $\Phi_R$,
\begin{multline}\label{proof_theo_V_contact_1}
\int_{\R^d\setminus B_R}(|\nabla (V-\Phi_R)|^2) \, dx=\int_{\R^d\setminus B_R}(|\nabla V|^2-|\nabla \Phi_R|^2) \, dx\\
\le \frac{\delta_R}{1-\delta_R}\int_{\R^d}|\nabla V|^2 \, dx\le2 \delta_R\int_{\R^d}|\nabla V|^2 \, dx.
\end{multline}
Just as in Lemma \ref{lemma_bound_capa}, we can show that for any $\delta>0$, eventually choosing a larger $R_0$ depending on $\delta$, for any $R\ge R_0$ and any $\rho \ge R$
\begin{equation}
|\Phi_R(x)| \le \delta( R+\rho) \quad \text{ for any } x \in \R^d\setminus B_{R+\rho}
\end{equation}
and 
\begin{equation}
|V(x)|=|Ax| \ge (R+\rho) \min_{x \in \partial B_1}|Ax| \quad \text{ on }\{V(x)=Ax\}\setminus B_{R+\rho}.
\end{equation}
It follows that, up to choosing $\delta <  \min_{x \in \partial B_1}|Ax|$,
there exists a constant $C>0$, that does not depend on $R$ such that for any $R \ge R_0$, 
\begin{equation}
|V(x)-\Phi_R(x)| \ge C(R+\rho) \ge C\rho \quad \text{ for any } x \in \{V=A\}\setminus B_{R+\rho}.
\end{equation}
Suppose now that $d \ge 3$. By \cite[Theorem 4.15]{EG_book} and \eqref{proof_theo_V_contact_1} for any $R\ge R_0$
\begin{equation}\label{proof_theo_V_contact_2}
|\{V=A\}\setminus B_{R+\rho}| \le \frac{C_1}{\rho^\frac{2d}{d-2}} |\{V=A\}\setminus B_R|^{\frac{d}{d-2}}
\end{equation}
for some constant $C_1>0$ that does not depend on $R$ nor $\rho$.
Let us consider the sequence 
\begin{equation}
R_n:= R_0+ \left(1-\frac{1}{2^n}\right) \rho \quad \text{ for } n \in \mathbb{N}\setminus\{0\}.
\end{equation}
We are going to iterate \eqref{proof_theo_V_contact_2}  in the spirit of De Giorgi regularity theory along the sequence $\{R_n\}$.
For any $n \in \mathbb{N}$
\begin{equation}
|\{V=A\}\setminus B_{R_{n+1}}| \le \frac{C_1}{(4^{-n}\rho^2)^\frac{d}{d-2}} |\{V=A\}\setminus B_{R_n}|^{\frac{d}{d-2}}
\end{equation}
thus iterating we obtain the estimate 
\begin{equation}
|\{V=A\}\setminus B_{R_n}| \le\left( \frac{C_1^n}{\rho^\frac{2dn}{d-2}} 
\prod_{j=0}^{n-1}4^{(j+1)\left(\frac{d}{d-2}\right)^{n-j}}\right)|\{V=A\}\setminus B_{R_0}|^{\frac{dn}{d-2}}.
\end{equation}
Furthermore 
\begin{equation}
\prod_{j=0}^{n-1}4^{(j+1)\left(\frac{d}{d-2}\right)^{n-j}}=4^{\sum_{j=0}^{n-1}(j+1)\left(\frac{d}{d-2}\right)^{n-j}},
\end{equation}
and, letting $r:=\frac{d}{d-2}$,
\begin{multline}
\sum_{j=0}^{n-1}(j+1)r^{n-j}=\sum_{k=1}^n(n-k+1)r^k=(n+1)\frac{r(1-r^n)}{1-r}-\frac{r\big(1-(n+1)r^n+n r^{\,n+1}\big)}{(1-r)^2}\\
=\frac{r\big(r^{\,n+1}-(n+1)r+n\big)}{(1-r)^2}.
\end{multline}
Hence, taking $\rho$ large enough and letting $n \to \infty$ we conclude that 
\begin{equation}
|\{V=A\}\setminus B_{R_0+\rho}|=\lim_{n \to \infty}|\{V=A\}\setminus B_{R_n}|=0,
\end{equation}
thus $\{V(x)=Ax\}$ is  bounded if $d \ge 3$. If $d=2$ the proof is similar.

Hence, in any dimension $d\ge 2$ we have proved that there exists $\bar{R}>0$ such that 
\begin{equation}
|\{V=A\}\setminus B_{\bar R}|=0.
\end{equation}
It follows that $V-A$ is a solution of \eqref{prob_min_measure_A} in $B_R$ for any $R> \bar R$ and so $V \in C^{0,1}_{loc}(\R^d,\R^k)$ by Theorem \ref{theo_reg_m_fixed}.
Furthermore, $V$ is harmonic on $\R^d\setminus B_{\bar R}$ so that $V=\Phi_{\bar R}$ on $\R^d\setminus B_{\bar R}$.
In view of  \eqref{def_Phi_R} on  $\R^d\setminus B_{\bar R}$ we have 
\begin{equation}
|\Phi_{\bar R}(x)| \le \frac{C}{|x|^{d-2}}\quad \text{ and } \quad |\nabla \Phi_{\bar R}| \le \frac{C}{|x|^{d-1}},
\end{equation}
for some positive constant $C>0$ depending only on $d$, $\bar R$ and $\norm{V}_{L^\infty(\partial B_{\bar R})}$. Hence, we have completed the proof.
\end{proof}

\section{A radiality result} \label{sec_example}
In this final section we prove Theorems \ref{theo_example} and Lemma \ref{lemma_rad}. 
\begin{proof}[\textbf{Proof of Lemma \ref{lemma_rad}}]
Let  $f:B_1 \to \R$ be radial, $f \in H^1(B_1)$, and $g:[0,|B_1|] \to [0,+\infty)$   be   convex with $g(0)=0$ and $g>0$ in $(0,|B_1|]$.
Clearly 
\begin{multline}\label{ineq_rad}
\inf\left\{\frac{\int_{B_1} |\nabla w|^2 \, dx}{g(|\{w=f\}|)}:  w\in H^1_0(B_1),|\{w=f\}|\neq 0\right\}\\
\le \inf\left\{\frac{\int_{B_1} |\nabla w|^2 \, dx}{g(|\{w=f\}|)}:  w\in H^1_0(B_1),|\{w=f\}|\neq 0,w\text{ radial}\right\},
\end{multline}
thus we only need to prove the reverse inequality. 
Let $w\in H^1_0(B_1)$ with $g(|\{w=f\}|)>0$. The main idea is  to build a sequence of functions $w_k \in H^1_0(B_1)$ such 
that 
\begin{equation}
    \frac{1}{g(|\{w_k=f\}|)}\int_{B_1} |\nabla w_k|^2 \,dx \quad  \text{ is non-increasing},
\end{equation}
with $w_0:=w$ and $w_k$ is symmetrical with respect to $k$  hyperplanes.

To this end, for any $\nu \in \mb{S}^{d-1}$ let us consider the hyperplane $H_\nu:=\{x \in \R^d: \nu \cdot x=0\}$ and the half spaces 
$H^+_\nu:=\{x \in \R^d: \nu \cdot x>0\}$ and  $H^-_\nu:=\{x \in \R^d: \nu \cdot x<0\}$.
We define $w^+_\nu, w^-_\nu \in H^1_0(B_1)$ as 
\begin{align}
&w^+_\nu(x):=
\begin{cases}
w(x), &\text{ in } H^+_\nu \cap B_1,\\
w(x+2\mathop{\rm{dist}}(x,H_\nu) \nu ) &\text{ in } H^-_\nu \cap B_1,
\end{cases}\\
&w^-_\nu(x):=
\begin{cases}
w(x-2\mathop{\rm{dist}}(x,H_\nu) \nu ) &\text{ in } H^+_\nu \cap B_1,\\
w(x), &\text{ in } H^-_\nu \cap B_1,
\end{cases}
\end{align}
where $\mathop{\rm{dist}}(x,H_\nu)$ is the distance from the hyperplane $H_\nu$. 
Letting $\e:=|\{w=f\}|$, there exists $\sigma \in [0,1]$ such that 
\begin{equation}
|\{w=f\} \cap H_\nu^+|= \sigma \e \quad \text{ and  }  \quad |\{w=f\} \cap H_\nu^-|= (1-\sigma) \e, 
\end{equation}
since $f$ is radial.

If $\sigma=0$ then, since $g$ is convex and $g(0)=0$,
\begin{equation}
\frac{1}{g(2\e)}\int_{B_1} |\nabla w_{\nu}^-|^2 \,dx \le \frac{1}{2g(\e)}\int_{B_1} |\nabla w_{\nu}^-|^2 \,dx \le \frac{1}{g(\e)}\int_{B_1} |\nabla w|^2 \,dx.
\end{equation}
and similarly if $\sigma=1$
\begin{equation}
\frac{1}{g(2\e)}\int_{B_1} |\nabla w_{\nu}^+|^2 \,dx \le \frac{1}{2g(\e)}\int_{B_1} |\nabla w_{\nu}^+|^2 \,dx \le \frac{1}{g(\e)}\int_{B_1} |\nabla w|^2 \,dx.
\end{equation}
If $\sigma \in (0,1)$, we claim that either 
\begin{equation}
\frac{1}{g(2\e\sigma)}\int_{B_1} |\nabla w_{\nu}^{+}|^2 \,dx \le \frac{1}{g(\e)}\int_{B_1} |\nabla w|^2 \,dx \quad \text{ or } \quad \frac{1}{g((1-\sigma)\e)}\int_{B_1} |\nabla w_{\nu}^-|^2 \,dx \le \frac{1}{g(\e)}\int_{B_1} |\nabla w|^2 \,dx.
\end{equation}
Indeed, letting $a:=\int_{B_1} |\nabla w_{\nu}^{+}|^2 \,dx$ and $b:=\int_{B_1} |\nabla w_{\nu}^-|^2 \,dx$ the claim above reduces to check that
\begin{equation}
\frac{a}{g(2\e\sigma)} \le \frac{a+b}{2g(\e)} \quad \text{ or } \quad \frac{b}{g(\e(1-\sigma))} \le \frac{a+b}{2g(\e)},
\end{equation}
for any $a,b \in [0,+\infty)$, $\sigma \in (0,1)$ and $\e \in (0, |B_1|)$. Summing the two inequalities we obtain
\begin{equation}
2g(\e)\le g(2\e\sigma)+g(2\e(1-\sigma))
\end{equation}
which holds by convexity of $g$ since $g(\e)=g\left(\frac{2\e\sigma}{2}+\frac{2\e(1-\sigma)}{2}\right)$.

Hence, either $g(|\{w_\nu^+=f\}|)\neq 0$ or $g(|\{w_\nu^-=f\}|)\neq 0$ and 
\begin{equation}
\frac{\int_{B_1} |\nabla w_\nu^+|^2 \, dx}{g(|\{w^+_\nu=f\}|)} \le  \frac{\int_{B_1} |\nabla w|^2 \, dx}{g(|\{w_\nu=f\}|)} 
\quad \text{ or } \quad 
\frac{\int_{B_1} |\nabla w_\nu^-|^2 \, dx}{g(|\{w^-_\nu=f\}|)} \le  \frac{\int_{B_1} |\nabla w|^2 \, dx}{g(|\{w_\nu=f\}|)}.
\end{equation}
It follows that for any $w\in H^1_0(B_1)$ with $g(|\{w=f\}) \neq 0$ there exist a function $\psi \in H^1_0(B_1)$ with  $g(|\{\psi=f\} \neq 0$ even with respect to the hyperplane $H_\nu$ and with  
\begin{equation}
\frac{\int_{B_1} |\nabla \psi|^2 \, dx}{g(|\{\psi=f\}|)} \le  \frac{\int_{B_1} |\nabla w|^2 \, dx}{g(|\{w=f\}|)}.
\end{equation}
Let $\{v_i\}_{i \in \mathbb{N}\setminus \{0\}}\subset \mb{S}^{d-1}$ be a dense set in $\mb{S}^{d-1}$. By the arbitrariness of $\nu$ and iterating the same procedure a finite number of times, for  any $k \in \mathbb{N}$
\begin{multline}
\inf\left\{\frac{\int_{B_1} |\nabla w|^2 \, dx}{g(|\{w=f\}|)}: w\in H^1_0(B_1),  w\in H^1_0(B_1),|\{w=f\}|\neq 0\right\}\\
=\inf_{\e \in (0,|B_1|)}\inf\Bigg\{\frac{1}{g(\e)}\int_{B_1} |\nabla w|^2 \, dx: w\in H^1_0(B_1), |\{w=f\}|=\e,\\
w\text{ is even with respect to the hyperplanes } H_{\nu}, \text{ for } i=1, \dots, k\Bigg\}.
\end{multline}
Passing to the infimum with respect to $k$,
\begin{multline}
\inf\left\{\frac{\int_{B_1} |\nabla w|^2 \, dx}{g(|\{w=f\}|)}:  w\in H^1_0(B_1), |\{w=f\}|\neq 0\right\}\\
=\inf_{\e \in (0,|B_1|)}\inf_{k \in \mathbb{N}\setminus \{0\}}\inf\Bigg\{\frac{1}{g(\e)}\int_{B_1} |\nabla w|^2 \, dx: w\in H^1_0(B_1),  |\{w=f\}|=\e,\\
w\text{ is even with respect to the hyperplanes } H_{\nu}, \text{ for } i=1, \dots, k\Bigg\}.
\end{multline}
Let $\{w_k\}$ be a minimizing sequence of 
\begin{multline}
\inf_{k \in \mathbb{N}\setminus \{0\}}\inf\Bigg\{\frac{1}{g(\e)}\int_{B_1} |\nabla w|^2 \, dx:  |\{w=f\}|=\e,\\
w\text{ is even with respect to the hyperplanes } H_{\nu}, \text{ for } i=1, \dots, k\Bigg\}
\end{multline}
so that   $w_k \in H^1_0(B_1)$,  $g(|\{w_k=f\}|)=\e$,  and  $w_k$  is even with respect to the hyperplanes $H_{\nu}$, for $i=1, \dots, k$. Since,  $\{w_k\}$ is bounded in $ H^1_0(B_1)$, up to passing to a subsequence, there exists $w_\infty \in H^1_0(B_1)$ such that $w_k \rightharpoonup w_\infty$ weakly in $H^1_0(B_1)$ as $k \to \infty$ and $w_\infty$ is radial. Furthermore, since $g(|\{w_\infty=f\}|)\ge \liminf_{k} g(|\{w_k=f\}|)=\e$,
\begin{equation}
\frac{\int_{B_1} |\nabla w_\infty|^2 \, dx}{g(|\{w_\infty=f\}|)} \le  \liminf_{k \to \infty} \frac{\int_{B_1} |\nabla w_k|^2 \, dx}{g(|\{w_k=f\}|}.
\end{equation}
Hence, we conclude that 
\begin{multline}
\inf\left\{\frac{\int_{B_1} |\nabla w|^2 \, dx}{g(|\{w=f\}|)}:  w\in H^1_0(B_1),|\{w=f\}|\neq 0\right\}\\
=\inf_{\e \in (0,|B_1|)}\inf\Bigg\{\frac{1}{g(\e)}\int_{B_1} |\nabla w|^2 \, dx: w\in H^1_0(B_1), |\{w=f\}|=\e,w \text{ radial}\}\\
=\inf\Bigg\{\frac{\int_{B_1} |\nabla w|^2}{g(|\{w=f\}|)}:  w\in H^1_0(B_1),|\{w=f\}|\neq 0,w \text{ radial}\},
\end{multline}
thus we have proved \eqref{eq_rad}.
\end{proof}

We know turn to the proof of Theorem \ref{theo_example}.

\begin{proof}[\textbf{Proof of Theorem \ref{theo_example}}.]
Let  $d \ge3$.  For any $W \in H_0^1(B_1, \R^k)$, the pointwise inequality $ |\nabla W|^2 \ge|\nabla |W||^2$ implies that 
\begin{equation}
\Lambda^*(\mathop{\rm Id_d})\ge \inf_{\e\in(0, |B_1|)}\inf\left\{\frac{1}{\e}\int_{B_1} |\nabla w|^2 \, dx: w\in H^1_0(B_1), |\{w(x)=|x|\}|=\e\right\}.
\end{equation}
By Lemma \ref{lemma_rad} with $f(x)=|x|$ and $g(\e)=\e$, 
\begin{equation}
\Lambda^*(\mathop{\rm Id_d})\ge \inf_{\e\in(0, |B_1|)}\inf\left\{\frac{1}{\e}\int_{B_1} |\nabla w|^2 \, dx: w\in H^1_0(B_1), |\{w(x)=|x|\}|=\e, w\text{ radial}\right\}.
\end{equation}
For any fixed $\e \in (0, |B_1|)$,  letting $r_\e:=(\e/\omega_d)^{\frac{1}{d}}$, the only minimizer $w_\e$ of 
\begin{equation} 
\inf\left\{\frac{1}{\e}\int_{B_1} |\nabla w|^2 \, dx: w\in H^1_0(B_1), |\{w(x)=|x|\}|=\e, w\text{ radial}\right\}
\end{equation}
 is the function 
\begin{equation}
w_\e(x):= 
\begin{cases}
r_\e\frac{1-|x|^{2-d}}{1-r_\e^{2-d}},     & \text{ in  } B_1 \setminus B_{r_\e},\\
|x|,     & \text{ in  } B_{r_\e}.
\end{cases}
\end{equation}
Furthermore,
\begin{multline}
\int_{B_1} |\nabla w_\e|^2 \, dx= \e +   \frac{r_\e(2-d)^2}{(1-r_\e^{2-d})^2}d \omega_d \int_{B_1\setminus B_{r_\e}} |x|^{2-2d} \, dx\\
=\e +   \frac{r_\e(2-d)^2}{(1-r_\e^{2-d})^2}d \omega_d \int_{r_\e}^1 \rho^{1-d} \, d\rho = \omega_d r_\e^d\frac{1+d(d-2)}{1-r_\e^{d-2}},
\end{multline}
thus the function $h:(0,|B_1|) \to (0,+\infty)$ 
\begin{equation}
h(\e):=\frac{1}{\e}\int_{B_1} |\nabla w_\e|^2 \, dx=\frac{1+d(d-2)}{1-r_\e^{d-2}}
\end{equation}
is increasing. We conclude that 
\begin{equation}
\Lambda^*(\mathop{\rm Id_d}) \ge \inf_{\e \in (0,|B_1|)}h(\e)=\lim_{\e \to 0^+}h(\e) =1+d(d-2).
\end{equation}
\end{proof}

\section*{Acknowledgements}
\noindent
G. Siclari is partially supported by the 2026 INdAM--GNAMPA project 2026 ``Asymptotic analysis of variational problems''.
B. Velichkov was supported by the European Research Council's (ERC) project n.853404 ERC VaReg - \it Variational approach to the regularity of the free boundaries \rm, financed by the program Horizon 2020. 
B. Velichkov also acknowledges the MIUR Excellence Department Project awarded to the Department of Mathematics (CUP I57G22000700001) and also the support from the project MUR-PRIN “NO3” (n.2022R537CS). 

\bibliographystyle{acm}
\bibliography{references}	
\end{document}